\newcommand\op{\operatorname{op}}
\newcommand\oti{\otimes}
\newcommand\Alg{\operatorname{\bf Alg}}
\newcommand\Set{\operatorname{\bf Set}}
\newcommand\Pos{\operatorname{\bf Pos}}
\newcommand{\id}{\operatorname{id}}
\newcommand\colim{\operatorname{\it colim}}
\newcommand{\Str}{\operatorname{\bf Str}}
\newcommand{\Met}{\operatorname{\bf Met}}
\newcommand{\Mon}{\operatorname{\bf Mon}}
\newcommand\ck{\mathcal {K}}
\newcommand\ca{\mathcal {A}}
\newcommand\cb{\mathcal {B}}
\newcommand\cd{\mathcal {D}} 
\newcommand\cl{\mathcal {L}}
\newcommand\ce{\mathcal {E}}
\newcommand\cv{\mathcal {V}}
\newcommand\cm{\mathcal {M}}
\newcommand\N{\mathbb{N}}
\newcommand\R{\mathbb{R}} 
\newcommand\var{\varepsilon}
\newtheorem{theorem}{Theorem}[section]
\newtheorem{lemma}[theorem]{Lemma}
\newtheorem{birk}[theorem]{Birkhoff Variety Theorem}
\newtheorem{prop}[theorem]{Proposition}
\newtheorem{corollary}[theorem]{Corollary}
\theoremstyle{definition}
\newtheorem{defi}[theorem]{Definition}
\newtheorem{assump}[theorem]{Assumption}
\newtheorem{example}[theorem]{Example}
\newtheorem{exam}[theorem]{Example}
\newtheorem{exs}[theorem]{Examples}
\newtheorem{remark}[theorem]{Remark}
\newtheorem*{rem}{Remark}
\newtheorem{nota}[theorem]{Notation}
\newtheorem{odst}[theorem]{}
\numberwithin{equation}{section}
\begin{document}

\title{Which Categories are Varieties of quantitative algebras?}
\author[Ji{\v{r}}{\'{\i}} Ad{\'{a}}mek]{Ji{\v{r}}{\'{\i}} Ad{\'{a}}mek$^{\ast)}$}
\dedicatory{\rm Czech Technical University, Prag\\
Technical University of Braunschweig}

\begin{abstract}
Classical varieties were characterized by Lawvere as the categories with effective congruences and a varietal generator:
an abstractly finite regular generator which is regularly projective (its hom-functor preserves regular epimorphisms).  We
characterize varieties of quantitative algebras  of Mardare, Panangaden and Plotkin analogously as metric-enriched  categories.  We introduce the concept of a subcongruence  (a metric-enriched analogue of a congruence) and the corresponding subregular epimorphisms obtained via colimits of subcongruences. Varieties of quantitative algebras are precisely the metric-enriched categories with effective subcongruences and a subvarietal generator: an abstractly finite  subregular generator which is subregularly projective (its hom-functor preserves subregular epimorphisms).
\end{abstract}

\maketitle
\footnote{$^{\ast)}$ Supported by the Grant Agency of Czech Republic under the grant 22-02964S.}

\section{Introduction}\label{sec1}

For the semantics of probabilistic programs Mardare et al. introduced in \cite{MPP17} varieties (aka 1-basic varieties) of quantitative algebras, and their properties have been studied in a number  of articles, e.g.\ \cite{A}, \cite{B1}, \cite{B2} and \cite{B3}. 
The aim of our paper is to characterize those varieties as categories enriched over the category $\Met$ (of extended metric spaces and nonexpansive maps) which have effective subcongruences and a `well-behaved' generator.
This is completely analogous to Lawvere's characterization of (classical) varieties as exact categories having a varietal generator (\cite{L}). A similar characterization of varieties of ordered algebras has been presented in \cite{A1}. These two results are recalled in Section 2.  We conclude that the choice Mardare et al. made when defining
quantitative algebras was the 'right' one: they require the $n$-ary operations
on a space $A$ to be nonexpanding with respect to the categorical product $A^n$  (having the maximum metric, see 3.2). However, $\Met$ is not
cartesian closed:
this is a symmetric monoidal closed category with the tensor product
given by the addition metric (see 3.1). Thus, for example, the monoid $(\mathbb{R}, +,0)$ with the Eucledian metric
(which is a monoid in $\Met$) is \emph{not} a quantitative monoid.  (Addition fails to be nonexpanding with respect to the maximum metric: consider the elements $(0,1)$ and $(1,2)$ of distance $1$ in $\R^2$, whereas their additions have distance $2$.)
The fact that varieties of quantitative algebras have, as we demonstrate below, a characterization completely  analogous to the classical varieties (and the ordered varieties) seems to justify their definition. 

The main 
concept in metric-enriched categories we introduce is that of subcongruence, a natural analogy of congruences in ordinary categories. A typical congruence on an object X of an ordinary category is the kernel pair of a morphism $f \colon X \to Y$; effectivity of congruences 
means that there are no other examples of congruences than kernel pairs. In metric-enriched categories we
work with the $\var$-kernel pair of a morphism $f$ for every real number $\var \geq 0$:  this is the parallel pair
$l, r \colon P \to X$ universal with respect to the property that the distance of $f \cdot l, f \cdot r \colon P \to Y$ is at most $\var$.
The \emph{kernel diagram} of $f$ is
a weighted diagram collecting all of its $\var$-kernel pairs. A typical subconguence is the kernel diagram of a morphism. Effectivity of subcongruences means that there are no other  examples. This is the topic of Section 3. 

A colimit of a subcongruence is determined by a certain morphism. In analogy to regular epimorphisms, we call
a morphism $f$ a \emph{subregular epimorphim} if there is a subcongruence whose colimit is determined by $f$.
(In varieties of quantitative algebras these are precisely the surjective homomorphisms.) An object whose hom-functor
preserves subregular epimorphisms is called a \emph{subregular projective}. 
We characterize varieties of quantitative algebras as the metric-enriched categories with effective subcongruences and a subvarietal generator: an abstractly finite subregular generator which is subregularly projective (Theorem \ref{T:main}).

 As an application we show that monoids in the monoidal category $\Met$ do not form a category equivalent to a variety of quantitative algebras (Example \ref{E:count}).

\vspace{4mm}

\textbf{Related work}

In the fundamental paper of Bourke and Garner \cite{BG} congruences in general enriched categories are presented.
The basic weight of Definition \ref{D:basic} leads to an instance of the category $\mathcal F$ in loc. cit. by adding to
our category $\mathcal B$ a morphism with domain $a$ and a new codomain. By considering $\Met$ as the base category,
$\mathcal F$ as above, and the category $\ck$ in op.cit. as our $\cb$, we get the following
 translation of the concepts of Bourke and Garner: $\mathcal F$-monic morphisms are our isometric embeddings (Definition \ref{D:iso}), and $\mathcal F$-quotient maps are our subregular epimorphisms (Definition \ref{D:subr}), etc.
 However, our concept of subcongruence appears to be simpler than the general $\mathcal F$-congruence of op. cit.
 The characterization of varieties of quantitative algebras we present in Theorem \ref{T:main} does not seem to follow from
 the general results of Bourke and Garner.	
\vspace{4mm}

\textbf{Ackowledgement.} The author is grateful to Henning Urbat for his very helpful suggestions.

\section{Varieties of Classical or Ordered Algebras}\label{sec2}

The aim of this section is to shortly recall the characterization of varieties and ordered varieties, so that the reader can discern the analogy to the concepts of subregular epimorphism and subcongruence introduced in the next section. It is possible to skip this section without losing the context.

\begin{assump} Throughout the paper $\Sigma$ denotes a finitary signature: a collection of sets $\Sigma_n$ of $n$-ary operation symbols for $n \in \Bbb{N}$.
\end{assump}

By a (classical) \emph{variety} of algebras is meant a class of $\Sigma$-algebras  presented by a set of equations between terms. Such as monoids, lattices, Boolean algebras etc. Lawvere introduced the following concept
(weaker than that of a finitely generated object):

\begin{defi}[\cite{L}]\label{D:abs}
	An object $X$ with copowers is \emph{abstractly finite} if every morphism from $X$ to a copower $\coprod\limits_{M} X$ factorizes through a finite subcopower $\coprod\limits_{M'} X$ (where $M'$ is a finite subset of $M$). 
\end{defi}

\begin{exam}
	In $\Set$ these are precisely the finite sets. In the category $\Pos$ of posets these are precisely the posets with a finite number of connected components (thus,
	the above concept if much broader than that of a finite poset).
\end{exam}

We recall the concept of a congruence. One does not need to assume finite completeness of the category $\ck$: a congruence is a collectively monic parallel pair $l$, $r\colon R\to X$ that every hom-functor $\ck(S,-)$ takes to an equivalence relation on the set $\ck (S,X)$.
More detailed:

\begin{defi}\label{R:cong}
	(1) A  \emph{relation}
		on an object $X$ of an  ordinary category is a collectively monic (ordered) pair of morphisms $l$, $r\colon R\to X$. (If the product $X \times X$ exists, then $\langle l, r\rangle \colon R \to X\times X$ represents a subobject of $X \times X$.)
	
	(2)	A \emph{congruence} on $X$  is  a relation $l$, $r\colon R\to X$ which is:
		
		(a) Reflexive. This can be expressed by saying that for every morphism $s \colon S \to X$ the pair
		$(s, s)$ factorizes through $(l, r)$. Equivalently: $l, r$ are split epimorphisms with a common splitting.
		
		(b) Symmetric. The pair $(r, l)$ factorizes through $(l, r)$.
		
		(c) Transitive. Given morphisms $u_0, u_1, u_2 \colon S \to X$ such that both $(u_0,u_1)$ and $(u_1,u_2)$
		factorize through the pair $(l,r)$, then also $(u_0,u_2)$ factorizes through that pair. 
		
		This last condition
		simplifies in the presence of pullbacks as follows:
		given the pullback $P$ of $r$ and $l$ (on the left) the pair $(	l\cdot \bar l$, $r\cdot \bar r)$ factorizes through $(l$, $r)$:
		$$
		\xymatrix@C=0.5pc@R=1.1pc{
			&&P\ar	[dl]_{\bar l}\ar@{}[d]|{\vee} \ar [dr]^{\bar r}&&\\
			& R\ar[dl]_{l} \ar[dr]_r&& R \ar [dl]^l \ar[dr]^r&\\
			X &&X&&X
		}
\hspace {5mm}	
		\quad
		\xymatrix@C=0.5pc@R=1.1pc{
			&&P\ar	[dl]_{\bar l}\ar[dd]_p \ar [dr]^{\bar r}&&\\
			& R\ar[dl]_{l}&& R  \ar[dr]^r&\\
			X && R\ar[ll]^{l} \ar[rr]_r&&X
		}
		$$
	\end{defi}	
	
	\begin{rem} A typical congruence is the kernel pair of a morphism $f \colon X \to Y$, which is the universal
	relation with  respect to $l \cdot f = r \cdot f$. That is, a pullback of $f$ with itself.

	 A category has \textit{effective congruences} if every congruence is a kernel pair of some morphism.
		\end{rem}
 
	\begin{remark}\label{R:gen}
	 An  object $G$ with copowers is
	
	(1)  A \emph{regular generator} if for every object $X$ the canonical  morphism $$[f] \colon \coprod\limits_{f\colon G\to X} G\to X$$ is a regular epimorphism.
		
	(2) A \emph{regular projective} if its hom-functor preserves regular epimorphisms: every morphism from $G$ to $X$
	factorizes through each regular epimorphism $Y \to X$.
	
\end{remark}

The following result is a minor sharpening of the characterization of varieties due to Lawvere \cite{L}.

\begin{theorem}[{\cite[Thm. 32]{A1}}]\label{T:2.6}
	A category is equivalent to a variety iff it has 
	
	(1) Reflexive coequalizers.
	
	(2) Effective congruences.
	
	(3) A \emph{varietal generator}: an abstractly  finite, regularly projective regular generator with copowers.
\end{theorem}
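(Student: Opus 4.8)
I would prove Theorem~\ref{T:2.6} by treating the two implications separately; the necessity direction is routine and the sufficiency direction is where the work lies.

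\emph{Necessity.} One checks directly that a variety $\Var(\Sigma,E)$ has properties (1)--(3). Reflexive coequalizers exist because every variety is cocomplete. For (2): since $A\times A$ exists, a congruence on an algebra $A$ in the categorical sense is a subalgebra $R\hookrightarrow A\times A$ which is reflexive, symmetric and transitive as a relation --- that is, a congruence in the sense of universal algebra --- and hence the kernel pair of the canonical quotient $A\to A/R$. For (3) one takes $G$ to be the free algebra $F1$ on one generator: its hom-functor is the forgetful functor $U\colon\Var\to\Set$, which has copowers (coproducts) and preserves regular epimorphisms (these being the surjective homomorphisms); the counit $\coprod_{UA}F1\to A$ is a surjective homomorphism, so $F1$ is a regular generator; and $F1$ is abstractly finite because a morphism $F1\to\coprod_M F1=FM$ picks out an element of the free algebra $FM$, which --- the signature being finitary --- is a term over finitely many variables and so lies in $FM'$ for a finite $M'\subseteq M$.

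\emph{Sufficiency.} Let $\ck$ satisfy (1)--(3) with varietal generator $G$, and put $U=\ck(G,-)\colon\ck\to\Set$. The plan is to show that $U$ is monadic and that its monad $\T$ is finitary; then $\ck\simeq\Set^{\T}$ is a variety by the standard correspondence between finitary monads on $\Set$ and varieties. First, $U$ has the left adjoint $M\mapsto\coprod_M G$ (copowers), inducing $\T$. Next, $U$ is conservative: being a regular generator, $G$ is in particular a generator, so $U$ is faithful; hypotheses (1) and (2) yield the usual $(\text{regular epi},\text{mono})$-factorization system on $\ck$; and if $Uf$ is bijective, then surjectivity of $Uf$ makes the canonical regular epimorphism $\coprod_{UY}G\to Y$ factor through $f$ --- forcing $f$ to be a regular epimorphism via the factorization system --- while injectivity of $Uf$ together with faithfulness of $U$ forces $f$ to be monic, so $f$ is an isomorphism. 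Finally, $\ck$ has reflexive coequalizers by (1), and --- this is the crux --- $U$ preserves them; granting this, the crude form of Beck's monadicity theorem yields that $U$ is monadic. That $\T$ is finitary is exactly the abstract finiteness of $G$: the copower $\coprod_M G$ is the filtered colimit of its finite subcopowers $\coprod_{M'}G$ (because $\ck(\coprod_M G,Y)=(UY)^M$ is the cofiltered limit of the $(UY)^{M'}$), and abstract finiteness amounts to $U$ preserving this colimit, i.e.\ to $\T M=\colim_{M'}\T M'$.

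It remains to prove that $U$ preserves reflexive coequalizers, and this is the main obstacle --- the only step using all three hypotheses at once. Given a reflexive pair $a,b\colon A\to B$ with coequalizer $q\colon B\to Q$, the morphism $q$ is a regular epimorphism, so by regular projectivity of $G$ the map $Uq$ is surjective; by effectivity of congruences $q$ is the coequalizer of its kernel pair $l,r\colon K\to B$, and since $U$, being representable, preserves this kernel pair, $(Ul,Ur)$ is the kernel pair of the surjection $Uq$, whence $Uq=\coeq(Ul,Ur)$ in $\Set$. The delicate point is that this coincides with $\coeq(Ua,Ub)$: equivalently, that the congruence on $B$ generated by $(a,b)$ --- which by effectivity is precisely $(l,r)$ --- is built from $(a,b)$ in a way preserved by $\ck(G,-)$. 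I would establish this by using the regular generator and the regular projectivity of copowers of $G$ to reduce to reflexive pairs whose domain is a copower of $G$, and then analysing the generated congruence through the canonical presentations $\coprod_{UX}G\to X$ supplied by the regular generator. Once $U$ preserves reflexive coequalizers the proof is complete; the same ingredient equivalently shows that the full subcategory of $\ck$ on the finite copowers of $G$ is a Lawvere theory $\ct$ with $\ck\simeq\Mod(\ct)$ via $X\mapsto\ck(-,X)|_{\ct^{\op}}$, every object of $\ck$ being a reflexive coequalizer of copowers of $G$.
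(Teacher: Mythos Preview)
The paper does not prove Theorem~\ref{T:2.6}: it is quoted from \cite[Thm.~32]{A1} as background. The paper's own argument for the analogous metric-enriched result (Theorem~\ref{T:main}) does \emph{not} proceed via monadicity. Instead it defines a signature $\Sigma_n = |\ck(G,\coprod_n G)|$, builds a comparison functor $E\colon\ck\to\Sigma\text{-}\Met$, shows $E$ is fully faithful using density of finite copowers of $G$ (Theorem~\ref{T:cc}), and then verifies that the essential image is closed under products, subalgebras and homomorphic images so that Birkhoff's Variety Theorem applies. The effectivity hypothesis is used only at the very end, to handle closure under homomorphic images. Your route via Beck monadicity is therefore genuinely different.

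That said, your proof has a real gap precisely at the point you flag as ``the main obstacle''. You reduce preservation of the reflexive coequalizer $q=\coeq(a,b)$ by $U=\ck(G,-)$ to showing that $\coeq(Ua,Ub)=\coeq(Ul,Ur)$ in $\Set$, where $(l,r)$ is the kernel pair of $q$. One inclusion is trivial; the other requires that whenever $x,y\colon G\to B$ satisfy $qx=qy$, the pair $(x,y)$ lies in the equivalence relation on $UB$ generated by the image of $(Ua,Ub)$. Your proposed attack --- ``reduce to reflexive pairs whose domain is a copower of $G$, and then analyse the generated congruence through the canonical presentations'' --- is not an argument: the congruence generated by a reflexive pair is an iterated (filtered) colimit of composites of relations, and nothing in your hypotheses tells you that $U$ preserves that colimit. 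Regular projectivity of $G$ gives you lifts along individual regular epimorphisms, not control over transitive closures. You also invoke a (regular epi, mono) factorization system produced by ``hypotheses (1) and (2)'', but those hypotheses do not obviously supply kernel pairs or pullback-stability of regular epimorphisms, so the existence of that factorization system needs justification as well (in the paper this is Proposition~\ref{P:fac}, and its proof uses the generator, not just (1) and (2)). If you want to keep the monadicity strategy, you must either prove preservation of reflexive coequalizers directly --- which is essentially as hard as the density statement the paper proves --- or switch to Beck's precise theorem via $U$-split pairs, where preservation is automatic but you must then check creation.
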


For order-enriched categories (enriched over  the cartesian closed category $\Pos$ of posets), an analogous result has recently been proved (\cite{AR2}). We work with $\Sigma$-algebras acting on a poset so that the operations are all monotone, and morphisms are the monotone homomorphisms. Varieties are presented by inequations between terms.
These categories are order-enriched: each hom-set is ordered pointwise. 

An object $G$ in an order-enriched category \emph{has tensors} if its hom-functor into $\Pos$ has a left adjoint (asigning to each poset $P$ an object $P \otimes G$). For every object $X$ we have the \emph{evaluation morphism} $\widehat{\id} \colon \ck (G,X) \otimes G \to X$,
which is the adjoint transpose of $\id \colon  \ck (G,X) \to \ck(G,X)$.

Given an order-enriched category, by a \emph{relation} on an object $X$ is meant a pair $l,r \colon R \to X$ of morphisms which is a collective order-embedding: given $u_1, u_2 \colon U\to R$ with $l\cdot u_1 \leq l\cdot u_2$ and $r\cdot u_1 \leq r\cdot u_2$, it follows that $u_1 \leq u_2$.

A \emph{coinserter} of a parallel pair $u_0$, $u_1 \colon U \to V$ is the universal morphism $c\colon V\to C$ with respect to $c\cdot u_0\leq c\cdot u_1$.

 \begin{defi}
A \emph{subregular epimorphism} is a morphism which is the coinserter of some reflexive parallel pair.
\end{defi}

In varieties of ordered algebras these are precisely the surjective morphisms.

\begin{defi}[ \cite{AR2}]
	Let $\ck$ be an order-enriched category.	 An  object $G$ with tensors is
	
	(1)  A \emph{subregular generator} if for every object $X$ the evaluation  morphism $\widehat{\id} \colon \ck (G,X) \otimes G \to X$  is a subregular epimorphism.
	
	(2) A \emph{subregular projective} if its hom-functor preserves regular epimorphisms: every morphism from $G$ to $X$
	factorizes through each subregular epimorphism $Y \to X$.
	
	(3) A \emph{subeffective projective} if its hom-functor preserves coinserters of reflexive pairs.
\end{defi}

\begin{theorem} (\cite{AR2}, Thm. 4.8) Varieties of ordered algebras are, up to equivalence, precisely the order-enriched categories with reflexive coinsetrters and an abstractly finite subregular generator which is subeffectively projective.
\end{theorem}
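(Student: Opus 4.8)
The plan is to prove the two implications separately: ``every variety of ordered algebras has the stated properties'' is essentially a verification, while the converse ``every such order-enriched category is equivalent to a variety'' carries the content.

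For the forward direction, let $\ck$ be the category of $\Sigma$-algebras in $\Pos$ satisfying a set of inequations, hom-sets ordered pointwise. Reflexive coinserters exist in $\ck$ (construct them by reflecting coinserters of underlying posets into $\ck$, or by the usual iterated construction). The candidate generator is $G = F1$, the free algebra on the one-element poset. As $\ck(G,-) \cong U \colon \ck \to \Pos$, the tensor $\ck(G,X)\otimes G$ is $FUX$ and the evaluation morphism $\widehat{\id}$ is the counit $FUX \to X$; it is surjective, and a surjective homomorphism of ordered algebras is a coinserter of a reflexive pair (built from its inequational kernel), hence a subregular epimorphism --- so $G$ is a subregular generator. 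Since $U$ preserves coinserters of reflexive pairs (the underlying poset of such a coinserter is the coinserter of the underlying pair), $G$ is subeffectively projective. Finally $\coprod_M G = FM$ is free on the discrete poset $M$, and a homomorphism $F1 \to FM$ is a single element of $UFM$, i.e.\ a term in finitely many of the generators, so it factors through $FM'$ for finite $M' \subseteq M$; thus $G$ is abstractly finite.

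For the converse, suppose $\ck$ has reflexive coinserters and an abstractly finite subregular generator $G$ that is subeffectively projective. Let $\ct$ be the order-enriched, finitary algebraic theory given by the full subcategory of $\ck^{\op}$ on the copowers $n\cdot G$, $n \in \N$; its models in $\Pos$ --- order-enriched product-preserving functors $\ct \to \Pos$ --- form a variety of ordered algebras $\Mod\,\ct$, whose $n$-ary operation poset is $\ck(n\cdot G, G)$ and whose inequations record the order and composition of $\ct$. There is a canonical comparison $K \colon \ck \to \Mod\,\ct$, $X \mapsto \ck(-,X)|_{\ct}$, which is the Eilenberg--Moore comparison for the monad $\T = \ck(G, (-)\otimes G)$ on $\Pos$. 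One shows $K$ is an equivalence by an order-enriched monadicity argument: every object $X$ is canonically a reflexive coinserter of free objects $\ck(G,-)\otimes G$ (a bar-type presentation, available because $G$ is a subregular generator), $\ck(G,-)$ sends these coinserters to coinserters in $\Pos$ (subeffective projectivity), and $\ck$ has all reflexive coinserters; hence $\ck \simeq \ck^{\T}$. Abstract finiteness of $G$ then forces $\T$ to be the finitary monad determined by $\ct$ (equivalently, $\T$ preserves filtered colimits), so $\ck \simeq \ck^{\T} \simeq \Mod\,\ct$ is a variety of ordered algebras.

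The step I expect to be the main obstacle is the enriched monadicity argument: one must pin down exactly which reflexive colimits $\ck(G,-)$ is required to preserve and to create, and verify Beck's conditions with coinserters of reflexive pairs replacing split coequalizers --- in particular checking that the canonical $\widehat{\id}$-presentations are preserved by $\ck(G,-)$ and reflected, which is precisely where subeffective projectivity (not merely subregular projectivity) is needed. A secondary difficulty is the bookkeeping that identifies $\Mod\,\ct$, defined via product-preserving order-enriched functors, with ordered algebras presented by inequations between $\Sigma$-terms, so that no extra expressive power slips in.
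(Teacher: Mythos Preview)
The paper does not prove this theorem: it is quoted from \cite{AR2} (their Theorem~4.8) as part of the background Section~\ref{sec2}, which the author explicitly says may be skipped. There is therefore no proof in the present paper to compare your attempt against.

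Your outline is a reasonable route to the result, and the two obstacles you flag are the right ones. It may still be useful to note that the paper's proof of the \emph{metric} analogue (Theorem~\ref{T:main}) follows a different strategy from your monadicity argument: instead of an enriched Beck theorem, the author defines a concrete comparison functor $E\colon \ck \to \Sigma\text{-}\Met$ for the signature $\Sigma_n = |\ck(G,\coprod_n G)|$, proves $E$ is full and faithful via the density of finite copowers of $G$ (Theorem~\ref{T:cc}), and then verifies that the essential image is closed under products, subalgebras and homomorphic images, so that the Birkhoff Variety Theorem yields the conclusion. This bypasses both of your obstacles: there is no Beck-style creation condition to check, and the identification with an inequationally presented class comes for free from Birkhoff. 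If you want your argument to align with the paper's development (and presumably with \cite{AR2}), this is the template to transpose to $\Pos$; your monadicity approach is more conceptual but, as you correctly anticipate, requires care in matching the class of colimits preserved by $\ck(G,-)$ to the class needed for the enriched Beck criterion.
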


A certain disadvantage of this result is that the concept of a subeffective projective is not very intuitive. In contrast, subregular projectives are completely analogous to regular projectives in ordinary varieties: they are the objects $X$
in varieties of ordered algebras such that for every surjective morphism $e \colon A \to B$ all morphims from $X$ to $B$ factorize through $e$. This has been improved in \cite{A1} by introducing effectivity of subcongruences which we now recall.

The role of the kernel pairs of a morphism $f\colon X\to Y$ is played in $\Pos$-enriched categories by \emph{the subkernel pair}: the universal pair $l,r \colon R\to X$ with respect to  $f\cdot l\leq f\cdot r$. Every subkernel pair is a relation which is not only reflexive, but has the following stronger property we called order-reflexivity:

\begin{defi}[\cite{A1}]\label{D:subcon}
	Let $\ck$ be an order-enriched category.
	A relation $l,r \colon R\to X$ is 
	
	(1 )\emph{Order-reflexive} if every pair $s_1\leq s_2 \colon S\to X$ factorizes through it.
	
	(2) A \emph{subcongruence} is order-reflexive and transitive .
	
	The category has \emph{effective subcongruences} if every subcongrience is the subkernel of some morphism.
\end{defi}

\begin{theorem}[{\cite[Thm. 5.8]{A1}}]\label{T:pos} 
	An order-enriched category is equivalent to a variety of ordered algebras iff it has 
	
	(1) Reflexive coequlizers and subkernel pairs.
	
	(2) Effective subcongruences.
	
	(3) A \emph{subvarietal generator}: 
	an abstractly finite, subregularly projective, subregular generator with tensors.
\end{theorem}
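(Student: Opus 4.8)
The plan is to prove the two implications along the lines of Lawvere's argument recalled in Theorem~\ref{T:2.6}, now carried out in the order-enriched setting.

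\smallskip
\noindent\textbf{Necessity.} Let $\ck$ be a variety of ordered $\Sigma$-algebras presented by inequations between terms. Order-enrichment is the pointwise order on homomorphisms. Reflexive coequalizers are computed on underlying posets — the operations and the order descend precisely because the pair is reflexive — and the subkernel pair of $f\colon A\to B$ is the subalgebra $R=\{(a,a')\in A\times A : f(a)\le f(a')\}$ of $A\times A$ with its two projections $l,r$, which one checks at once to be universal with respect to $f\cdot l\le f\cdot r$. For effectivity of subcongruences, given a subcongruence $l,r\colon R\to A$ I would form $A/R$ by equipping the underlying set of $A$ with the preorder generated by $\le_A$ and the pairs $(l(x),r(x))$, antisymmetrising, and transporting the operations; order-reflexivity and transitivity (Definition~\ref{D:subcon}) are exactly what make the operations well defined and force the subkernel pair of $q\colon A\to A/R$ to be $R$ again. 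Finally the free algebra $G=F_\Sigma(1)$ on a one-point discrete poset is a subvarietal generator: it has tensors, $P\oti G=F_\Sigma(P)$; it is abstractly finite (Definition~\ref{D:abs}) because a morphism $F_\Sigma(1)\to\coprod_M F_\Sigma(1)=F_\Sigma(M)$ names a term in finitely many of the variables in $M$; it is subregularly projective because a homomorphism out of a free algebra is a choice of element and surjections admit preimages; and it is a subregular generator because the evaluation morphism $\widehat{\id}_A\colon\ck(G,A)\oti G\to A$ is the canonical surjection $F_\Sigma(|A|)\to A$, which is a subregular epimorphism since surjective homomorphisms of ordered algebras are.

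\smallskip
\noindent\textbf{Sufficiency.} Conversely, let $\ck$ satisfy (1)--(3) with subvarietal generator $G$, and set $E=\ck(G,-)\colon\ck\to\Pos$; the tensor assumption provides a left adjoint $(-)\oti G$, hence a monad $\T$ on $\Pos$. First I would show that $E$ is monadic: since $G$ is a subregular generator each evaluation morphism $\widehat{\id}_A\colon\ck(G,A)\oti G\to A$ is a subregular epimorphism, so every object is a subregular quotient of the free object $\ck(G,A)\oti G$; since $G$ is subregularly projective, $E$ preserves subregular epimorphisms; and together with the reflexive coequalizers and subkernel pairs from~(1) this verifies the hypotheses of the $\Pos$-enriched Beck monadicity theorem. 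Next, abstract finiteness of $G$ makes $\T$ finitary: $E$ preserves the canonical expression of $\coprod_M G$ as the filtered union of its finite subcopowers, and since $G$ is a generator this propagates to all filtered colimits. Equivalently one may work directly with the $\Pos$-enriched Lawvere theory on the finite copowers of $G$ in $\ck^{\op}$, building the comparison functor from $\ck$ into the variety with signature $\Sigma=\bigsqcup_{n\in\N}\ck(G,\coprod_n G)$ — ordered as in $\ck$ — and inequations $I$ read off from the kernel data of the evaluation morphisms.

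\smallskip
\noindent\textbf{The main obstacle.} The delicate point — and where \emph{effectivity of subcongruences} is indispensable — is to upgrade ``$\ck$ is the category of algebras for \emph{some} finitary monad on $\Pos$'' to ``$\ck$ is a variety \emph{presented by inequations between terms}'': concretely, that $\T$ preserves surjections of posets, equivalently that a quotient of a free ordered algebra by a set of inequations is again inequationally presented, equivalently that the comparison functor of the previous paragraph is essentially surjective. I would obtain this from~(2): the subkernel pair of a surjective morphism of $\ck$ is a subcongruence (it is order-reflexive and transitive), hence by effectivity a genuine subkernel pair, which forces the quotient constructions available in $\ck$ to coincide with the inequational quotients of free ordered algebras; applying $E$ and using that $G$ is subregularly projective then transports this matching down to $\Pos$. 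Once this is secured, fully faithfulness of the comparison functor and correctness of the read-off signature and inequations are routine from $G$ being a subregular, subregularly projective generator, and $E$ (equivalently, that comparison functor) is the desired equivalence of $\ck$ with $\Var(\Sigma,I)$.
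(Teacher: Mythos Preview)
This theorem is not proved in the present paper --- it is quoted from \cite{A1} --- so there is no proof here to compare against directly. However, the paper \emph{does} prove the metric analogue, Theorem~\ref{T:main}, and that proof indicates the intended route for the ordered case as well: one defines the signature $\Sigma_n=\ck(G,\coprod_n G)$, builds a comparison functor $E\colon\ck\to\Sigma\text{-}\Pos$, shows $E$ is fully faithful using density of the finite copowers of $G$, and then applies the Birkhoff Variety Theorem to the essential image $\ck'$, verifying closure under products, subalgebras, and surjective images one at a time. Effectivity of subcongruences is used \emph{only} in the last of these: given a surjective homomorphism $e\colon EK\to A$ in $\Sigma\text{-}\Pos$, its subkernel pair, transported back into $\ck$ via the equivalence $\ck\simeq\ck'$, is a subcongruence $\bar D$ on $K$; effectivity then produces a morphism $\bar e\colon K\to\bar K$ in $\ck$ with subkernel pair $\bar D$, and one checks $E\bar K\cong A$.

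Your monadicity route is in principle viable --- it is essentially the Kurz--Velebil approach the paper mentions right after this theorem --- but your ``main obstacle'' paragraph contains a genuine gap. You write that ``the subkernel pair of a surjective morphism of $\ck$ is a subcongruence\dots\ hence by effectivity a genuine subkernel pair'': this is vacuous, since a subkernel pair is already a subkernel pair. Effectivity is the \emph{converse} implication, and the place it is needed is not for morphisms of $\ck$ but for surjections in $\Sigma\text{-}\Pos$ whose kernel data, pulled back to $\ck$, is a priori only an abstract subcongruence. As written, you have not actually invoked hypothesis~(2) anywhere, so the passage from ``monadic over $\Pos$'' to ``presented by inequations'' is unsupported. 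The Beck step is also underspecified (subregular projectivity of $G$ gives that $E$ \emph{preserves} subregular epimorphisms, which is not the same as creating the coequalizers an enriched Beck criterion asks for), but that is more easily repaired than the misuse of effectivity.
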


A related result is presented by Kurz and Velebil \cite{KV}: they characterize varieties of ordered algebras as the monadic  categories over $\Pos$ for strongly finitary monads.

\section{Subregular Epimorphisms and Subcongruences}\label{sec3}

The role that regular epimorphisms play in varieties is played in the metric-enriched case by subregular epimorphisms which  we introduce now. For example, in a variety of quantitative algebras subregular epimorphisms are precisely the surjective homomorphisms (Corollary \ref{C:var-epi}). The role of congruences is played by weighted diagrams we call subcongruences (Definition \ref{D:proeq}).

\begin{assump} 
Throughout the rest of our paper we work with categories and functors enriched over $\Met$.
This is the closed monoidal category with

Objects: (extended) metric spaces, defined as usual except that the distance $\infty$ is allowed.

Morphisms: nonexpanding functions $f\colon X\to Y$, i.e. for all $x, x' \in X$ we have $d(x,x')\geq d\big(f(x), f(x')\big)$.

Unit: a singleton space $I$.

Tensor product $X \otimes Y$: cartesian product with the \emph{addition metric}:
$$d((x,y), (x', y'))=d(x,x') + d(y,y').$$
\end{assump}


Every set $X$ is considered to be the \emph{discrete space}: all distances  are $\infty$ or $0$. The underlying set of a space $M$ is denoted by $|M|$.

\begin{remark} 
	(1) A product $X \times Y$ in $\Met$ is the cartesian product with the \emph{maximum metric}:
	$$d((x,y), (x', y'))= \max \{d(x,x'), d(y,y')\}.$$

(2) A \emph{$($metric$)$-enriched category} is a category with a metric on each hom-set such that composition is nonexpanding (with respect to the addition metric). 
A \emph{$($metric$)$-enriched functor} $F\colon \ck \to \cl$ is a functor which is locally nonexpanding: for all parallel pairs $f$, $g\colon X\to Y$ in $\ck$ we have $d(f,g)\geq d(Ff, Fg)$. The concept of a metric-enriched natural transformation coincides with natural transformation between the underlying ordinary functors.

The category of all enriched functors from $\ck$ to $\cl$ is denoted by
$$
[\ck, \cl]\,.
$$
It is enriched by defining the distance of natural transformations $\sigma, \tau \colon F\to F'$ to be $\sup\limits_{K\in \ck} d(\sigma_K, \tau_K)$.
\end{remark}

\begin{exam} \label{E:alg}
(1) Mardare et al. \cite{MPP17} introduced the category
$$
\Sigma\text{-}\Met
$$
of \emph{quantitative algebras}. Objects are $\Sigma$-algebras $A$ acting  on a metric space such that the operations
$$
\sigma_A \colon A^n \to A \qquad (\sigma \in \Sigma_n)
$$
are nonexpanding with respect to the maximum metric.
Morphisms are the nonexpanding homomorphisms.

This is an enriched category: the distances of parallel morphisms are given by the supremum metric:
$$
d(f,g) =\sup_{a\in A} d \big( f(a), g(a) \big) \quad \mbox{for morphisms} \quad f, g \colon A\to B\,.
$$

(2) The forgetful functor  $U_\Sigma \colon \Sigma$-$\Met \to \Met$ is an enriched functor.
\end{exam}

It may come as a surprise that Mardare et al. do not use the addition  metric for quantitative algebras. However, the fact that varieties have, as we prove below, a characterization completely  analogous to what we have seen in Section \ref{sec2} justifies the definition.

Recall from Section \ref{sec2} that an object $G$ is abstractly finite if every morphism $f\colon G\to \coprod\limits_{M} G$ factorizes through a finite subcopower $\coprod\limits_{M'} G$
 ($M' \subseteq M$ finite). This is weaker than the concept of a finitely generated object (whose hom-functor preserves directed colimits of subobjects). And  in the setting of metric-enriched categories abstract finiteness fits well:
 
 \begin{example}\label{E:abst} 
 (1) In the category $\Met$ the only finitely generated object is the empty space (\cite{AR3}).
 In contrast, every finite space is abstractly finite.
 
 More generally, every metric space $G$ with finitely many connected components is abstractly  finite. Indeed, let $G = G_1 + \dots + G_k$ with $G_i$ connected. For every morphism $f\colon G \to \coprod\limits_{M} G$ and each $i= 1, \dots , k$ the image $f[G_i]$ is connected, thus, it lies in one of  the summands, say the summand corresponding to $m_i \in M$.
 Then $f$ factorizes through $\coprod\limits_{M'} G$ for $M'=\{m_1, \dots, m_k\}$.
 
 (2) The free algebra on one generator in $\Sigma$-$\Met$ is an abstractly finite object (Example \ref{E:abs}).
 \end{example}
 
Let us recall the concept  of a \emph{weighted colimit} in metric-enriched categories $\ck$, due to Borceux and Kelly \cite{BK}. We are given a diagram $D\colon \cd \to \ck$ and a weight $W \colon \cd^{\op} \to \Met$, both enriched functors. The weighted colimit is an object of $\ck$
$$
C=\colim\limits_W D 
$$
together with isomorphisms (in $\Met$)
$$
\psi_X \colon \ck(C, X) \to \big[ \cd^{\op}, \ck\big] \big(W, \ck (D-, X)\big)
$$ 
natural in $X \in \ck$.

The \emph{unit} of $\colim\limits_W D$ is the natural transformation
$$
u = \psi_C (\id_C)\colon W\to \ck (D -,C)\,.
$$

Ordinary colimits of $D\colon \cd\to \ck$ are considered to be conical, i.e., weighted by the trivial weight (constant with value $I$).

Dually, given a diagram $D\colon \cd \to \ck$ and a weight
$W\colon \cd \to \Met$, the \emph{weighted limit} is an object $L=\lim\limits_W D$ together with natural isomorphisms
$$
\psi_X \colon \ck (X,L) \to [\cd, \ck] \big(W, \ck(X, D-)\big)\,.
$$

\begin{remark}\label{R:3.4}
All weighted colimits and weighted limits of diagrams in $\Met$ exist (\cite[Example 4.5]{AR3}).
\end{remark}

\begin{exam}\label{E:ten} 
\emph{Tensor product.} Given a metric space $M$ and an object $K$ of $\ck$, the object
$$
M\otimes K
$$
is the weighted colimit of the diagram $D\colon 1\to \ck$ representing $K$, weighted by $W$: $1^{\op}\to \Met$  representing $M$.
In other words,  there is a bijection
$$
\frac{M\otimes K\to X}{M\to \ck(K,X)}
$$
natural in $X\in \ck$. The unit $u\colon W\to \ck (D-, M\otimes K)$ is given by the morphism $M\to \ck(K, M\otimes K)$ adjoint to $\id_{M\otimes K}$.

An object $K$ \emph{has tensors} if $M\otimes K$ exists for all spaces.
 Equivalently: the hom-functor $\ck(K, -)\colon \ck \to \Met$ has an (enriched) left adjoint $-\oti K\colon \Met \to \ck$.
 
 If $M$ is a set (a discrete space), then
 $$
 M\otimes K= \coprod_M K\,.
 $$
\end{exam}

\begin{nota}\label{N:hat} 
(1)The morphism of $\ck$ corresponding to $f\colon M\to \ck (K,X)$ (in $\Met$) is denoted by
$$
\hat f\colon M\otimes K\to X\,.
$$
\end{nota}

\begin{example}\label{E:n}
(1) The morphism $\id \colon \ck (G,X) \to \ck (G,X)$ yields
the \emph{evaluation morphism}
$$
\hat \id\colon \ck(G,X)\otimes G\to X\,.
$$
For example if $\ck =\Met$, then $\widehat\id$ assigns to $(f,x)$ the value $f(x)$.

(2) Every morphism $f\colon X\to Y$ yields a map  $f_0 \colon I\to \ck(X,Y)$ in $\Met$ with
$$
f= \widehat f_0\,.
$$
\end{example}

 \begin{defi}\label{D:iso} 
(1) A morphism $p\colon P\to X$ is an \emph{isometric embedding} if for all parallel pairs $u_1$, $u_2\colon U\to P$ we have 
$$
d(u_1, u_2) = d(p\cdot u_1, p\cdot u_2)\,.
$$

(2) Dually, a morphism $p \colon P \to X$ is a \emph{quotient} if for all parallel pairs $u_1$, $u_2\colon X\to U$ we have
$$
d(u_1, u_2) = d(u_1\cdot p, u_2\cdot p)\,.
$$
For example, every surjective morphism in $\Met$ is a  quotient.

(3) A parallel pair $p$, $p'\colon P\to X$ is \emph{isometric}
if for all parallel pairs $u_1, u_2\colon U \to P$ we have 
$$
d(u_1, u_2)=\max \{d(p\cdot u_1, p\cdot u_2), d(p'\cdot u_1, p'\cdot u_2)\}\,.
$$
In case conical products exist, this is equivalent to statimg that
  the morphism $\langle p, p'\rangle \colon P\to X\times X$ is an isometric embedding.

  A \emph{relation} on an object $X$ is an isometric pair $p, p' \colon P \to X$.

  \end{defi}
  
  \begin{nota}
 We denote by $\var$, $\var'$ etc. non-negative real  numbers.
  \end{nota}
  \begin{example} \label{E:ker}
  An important example of a weighted limit is the \emph{$\var$-kernel pair} of a morphism $f\colon X\to Y$. This is a universal pair $p_1$, $p_2\colon P\to X$  with respect to $d(f\cdot p_1, f\cdot p_2)\leq \var$. That is, an isometric pair such that  given a pair  $q$, $q'\colon Q\to X$ with $d(f\cdot q, f\cdot q')\leq \var$, then this pair factorizes through $(p_1, p_2)$.

  This is the weighted limit of the following diagram
  
  $
\xymatrix@C=2pc@R=1pc{
a \ar[ddr]& & b \ar[ddl]&\\
&&& \ar@{|->}[r]^D&\\
&c&&&
}$
\qquad
$
\xymatrix@C=2pc@R=1pc{
X \ar[ddr]_f && X \ar[ddl]^f\\
&&\\
& Y &
}
$

\noindent
weighted  by the following weight
$$
\xymatrix@C=2pc@R=1pc{
a \ar[ddr]& & b \ar[ddl]&\\
&&& \ar@{|->}[r]^W&\\
&c&&&
}
\ \,
\xymatrix@=1pc{
\{0\} \ar@{_{(}->}[ddr] && \{1\} \ar@{_{(}->}[ddl]&\\
&&&\!\!\mbox{with}\ d(0,1) =\varepsilon\\
& \{0,1\} &&&
}
$$
\end{example}

An enriched category \textit{has $\var$-kernel pairs} if for every morphism and every $\var$ an $\var$-kernel pair exists.

In $\ck=\Met$ the $\var$-kernel pair of $f\colon X\to Y$ is given by the subspace of $X\times X$ on all pairs $(x_1, x_2)$ with $d\big(f(x_1), f(x_2)\big) \leq \var$.

\begin{remark}\label{R:ker}  
  The $\var$-kernel pair $(p_1, p_2)$ of $f\colon X\to Y$ is clearly reflexive and symmetric, but usually not transitive. 
  
  It has also the following (somewhat unexpected) universal property: every parallel pair $q_1$, $q_2\colon Q\to Y$ of distance $\var$ factorizes through $(p_1, p_2)$.
  
Indeed, since composition is nonexpanding, we have that
$$
 d(q_1, q_2)=\var \quad \mbox{implies} \quad  d(f\cdot q_1, f\cdot q_2)\leq \var\,,
$$
thus there exists $u\colon Q\to P$ with $q_i = p_i\cdot u$.
We call this property $\var$-reflexivity below because it generalizes reflexivity (Definition \ref{R:cong}). 
\end{remark}

\begin{defi}\label{D:eps}
 A parallel pair $p_1, p_2\colon P\to X$ is \emph{$\var$-reflexive} if every pair $q_1, q_2 \colon Q\to X$ with $d(q_1, q_2) \leq \var$ factorizes through it.
 \end{defi}

We now introduce the kernel diagram of a morphism $f$: it consists of $\var$-kernel pairs of $f$ for all $\var$. To make this precise, we work with a weight that is so `basic' in our development that we call it  the basic weight, denoted by  $B\colon \cb^{\op}\to \Met$.

\begin{defi} \label{D:basic}
Let $\cb$ be the discretely enriched category obtained from the chain  of all 
 real numbers $\var\geq 0$ by adding two parallel cocones 
with codomain $a$ to it:
$
l_\var, r_\var \colon \var \to a.
$
$$
\xymatrix@=5pc{
\varepsilon\ar[r] 
\ar@<0.4ex>[dr]^{r_\varepsilon}
\ar@<-0.4ex>[dr]_{l_\varepsilon}
& 
\varepsilon' \ar[r]
\ar@<-0.4ex>[d]_{l_{\varepsilon'}}
\ar@<0.4ex>[d]^{r_{\varepsilon'}}
 & \cdots \qquad (\varepsilon \leq \varepsilon')\\
 & a &&
}
$$

The \emph{basic weight} is the functor
$$
B\colon \cb^{\op} \to \Met
$$
defined on objects by $ Ba =\{\ast\}$ {and} 
$$ B\var =\{0,1\} \quad \mbox{with}\quad d(0,1)=\var\,.
$$
It is defined on the morphisms $\var \to \var'$ as $\id_{\{0,1\}}$, and on the morphisms $l_\var$, $r_\var$ as follows:
$$
\xymatrix@C=0.51pc@R=4pc{
& *+[F]{\ast} 
\ar [dl]_{Bl_{\varepsilon}}
\ar [dr]^{Br_\varepsilon}
&&\\
&&}
$$
\vskip-2.9mm\hskip 5.1cm $\boxed {0\qquad\,1}$
\end{defi}

\begin{remark}\label{R:3.12}
A colimit of a diagram $D\colon \cb\to \ck$ weighted by $B$ is called a \emph{basic colimit}. Given such a colimit
$$
C=\colim_B D
$$
we have the unit
$$ u\colon W\to \ck (D-, C)\,.
$$
Its component
$$
u_a \colon \{\ast\} \to \ck (Da, C)
$$
determines a morphism $f\colon Da \to C$.
\end{remark}

\begin{prop}\label{P:COM}
To give  a basic colimit of $D\colon \cb \to\ck$ means  to give an object $C$ and  a morphism $f\colon Da \to C$ universal with respect to the following \emph{compatibility condition} 
\begin{equation}
d(f\cdot Dl_\var, f\cdot Dr_\var)\leq \var \quad \mbox{for each $\var$}\,.\tag{COM}
\end{equation}
\end{prop}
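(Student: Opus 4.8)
The plan is to unwind the definition of the weighted colimit $\colim_B D$ directly. Fix an object $X$ of $\ck$; the heart of the matter is to identify the metric space $[\cb^{\op},\Met]\bigl(B,\ck(D-,X)\bigr)$ of enriched natural transformations $B\Rightarrow\ck(D-,X)$, isometrically and naturally in $X$, with the subspace of $\ck(Da,X)$ consisting of those morphisms $g\colon Da\to X$ that satisfy (COM). Granting this, the Proposition follows from the (enriched) correspondence between representations and universal elements: a representation of the enriched functor $X\mapsto\{\,g\colon Da\to X\mid\text{(COM)}\,\}$ consists precisely of an object $C$ together with a morphism $f\colon Da\to C$ satisfying (COM) and universal among such morphisms, and tracing $\id_C$ through the defining isomorphism identifies this $f$ with the morphism determined by the unit in Remark \ref{R:3.12}.

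To establish the identification I would analyse an arbitrary enriched natural transformation $\alpha\colon B\Rightarrow\ck(D-,X)$. Its component at $a$ is a map $Ba=\{\ast\}\to\ck(Da,X)$, hence amounts to a single morphism $g:=\alpha_a(\ast)\colon Da\to X$. Naturality of $\alpha$ at the morphisms $l_\var$ and $r_\var$ forces $\alpha_\var(0)=g\cdot Dl_\var$ and $\alpha_\var(1)=g\cdot Dr_\var$, so $\alpha$ is completely determined by $g$; naturality at the chain morphisms $\var\to\var'$ is then automatic, using that the families $(Dl_\var)_\var$ and $(Dr_\var)_\var$ are cocones. The only constraint left on $g$ is that each component $\alpha_\var\colon B\var\to\ck(D\var,X)$ be nonexpanding, i.e.\ $d(g\cdot Dl_\var,\,g\cdot Dr_\var)\le d_{B\var}(0,1)=\var$ — precisely (COM). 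Hence $\alpha\mapsto g$ is a bijection onto the set of (COM)-morphisms, and naturality in $X$ is immediate by inspection.

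It remains to check that this bijection is an isometry — the one step where the metric structure is genuinely used, and the place I would be most careful. By definition the metric on $[\cb^{\op},\Met]\bigl(B,\ck(D-,X)\bigr)$ is the supremum over the objects of $\cb^{\op}$ of the distances of the components: the component at $a$ contributes $d(g,g')$, while the component at $\var$ contributes $\max\{d(g\cdot Dl_\var,g'\cdot Dl_\var),\,d(g\cdot Dr_\var,g'\cdot Dr_\var)\}$. Since composition in $\ck$ is nonexpanding, each of the latter distances is bounded by $d(g,g')$, so the supremum collapses to $d(g,g')$. Thus $[\cb^{\op},\Met]\bigl(B,\ck(D-,X)\bigr)$ is isometrically and naturally identified with the subspace $\{\,g\colon Da\to X\mid\text{(COM)}\,\}$ of $\ck(Da,X)$, and consequently an object $C$ with a morphism $f\colon Da\to C$ is a basic colimit of $D$ exactly when $h\mapsto h\cdot f$ is an isometric isomorphism $\ck(C,X)\cong\{\,g\colon Da\to X\mid\text{(COM)}\,\}$ natural in $X$ — that is, exactly when $f$ is universal with respect to (COM). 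This settles both implications of the Proposition at once; everything else is bookkeeping with Definition \ref{D:basic} and the definition of weighted colimit.
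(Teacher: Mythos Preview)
Your argument is correct and is essentially the paper's own proof: both identify an enriched natural transformation $B\Rightarrow\ck(D-,X)$ with a single morphism $g\colon Da\to X$ satisfying (COM) via naturality at $l_\var,r_\var$ and the nonexpansion of the $\var$-components, and then read off universality. Your explicit verification that the bijection is an isometry corresponds exactly to the paper's requirement (in the Remark following the Proposition) that $f$ be a quotient in the sense of Definition~\ref{D:iso}.
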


\begin{rem}
`Universal' means that 
$f$ is a quotient (Definition \ref{D:iso}) such that 
every morphism $f'\colon Da \to C'$ satisfying \thetag{COM} factorizes through $f$.
\end{rem}

\begin{proof}
Let $u$ be the unit as in Remark \ref{R:3.12}.
As above, put $f= u_a(\ast) \colon Da\to C$. The components $u_\var$ are, due to the naturality of $u$, given by
$$
u_\var (0) = f\cdot D l_\var \quad \mbox{and}\quad u_\var (1) = f\cdot Dr_\var\,.
$$
Thus $u_\var$ is determined by $f$, and since $u_\var$ is nonexpanding and $d(0, 1) =\var$ in $B\var$, we see that $f$ satisfies the compatibility  condition.

Conversely, whenever a morphism  $f\colon Da \to C$ satisfies \thetag{COM}, there is a unique natural transformation
$$
u\colon B\to \ck (D-, C)\quad \mbox{with}\quad u_a(\ast) =f\,.
$$
Indeed, the components $u_\var$ are determined by the above equations.

In case $f$ is universal with respect to \thetag{COM}, we obtain a natural isomorphism
$$
\psi \colon \ck (C,X) \to \big[\cd^{\op}, \ck\big] \big( B, \ck(D-, X)\big)
$$
assigning to $h\colon C\to X$ the unique natural transformation whose $a$-component is given by $\ast\mapsto h\cdot f$.
\end{proof}

\begin{defi}
The universal morphism $f\colon Da\to C$ of the above proposition is called the \emph{colimit map} of the weighted colimit $\colim_B D$.
\end{defi}

The following example stems from \cite{ADV} where the basic weight has been introduced:

\begin{example}\label{E:bas}
Every metric space $M$ is a basic colimit  of discrete spaces. Define a diagram
\begin{align*}
D_M &\colon \cb \to \Met
\\
\intertext{by}
D_M a&= \vert M\vert\,, \quad \mbox{the underlying set of \ \, $M$},
\\
\intertext{and}
D_M\var &\subseteq \vert M \vert ^2 \quad \mbox{with} \quad (x_1, x_2)\in D_M \var \quad \mbox{iff}\quad d(x_1, x_2)\leq \var\,.
\end{align*}
On morphisms $\var \to \var'$ we use the inclusion maps $D_M \var \hookrightarrow D_M \var'$, and we define
$$
D_M l_\var, D_M r_\var \colon D_M \var \to \vert M\vert
$$
to be the left and right projection, resp.

The colimit map of $D_M$ is $\id \colon \vert M\vert \to M$. 
\end{example}

  \begin{defi}\label{D:ker}
Let $\ck$ be an enriched category with $\var$-kernel pairs. Given a morphism $f\colon X\to Y$ its \emph{kernel diagram} is the following basic diagram $D\colon \cb \to \ck$: Put
$Da =X$, and let the object $D\var$ and the morphisms $Dl_\var$ and $Dr_\var$ be given by the $\var$-kernel pair of $f$ below:
$$
\xymatrix@1{
D\varepsilon \ar@<0.6ex>[r]^{Dr_\varepsilon}
\ar@<-0.6ex>[r]_{Dl_\varepsilon}
& \ X\ar[r]^f &\ Y
} \qquad \mbox{(for all $\varepsilon$).}
$$
Given $\var\leq \var'$, then $D(\var \to \var')$ is the unique factorization of $(Dl_\var, Dr_\var)$ through $(Dl_{\var'}, Dr_{\var'})$.
\end{defi}

\begin{example}
(1) The diagram $D_M$ of Example \ref{E:bas} is the kernel diagram of the identity morphism on $M$.

(2) For constant morphisms $f\colon X \to Y$ the kernel diagram is the constant functor with value $X \times X$.
\end{example}

\begin{defi}\label{D:subr}
A \emph{subregular epimorphism} is a morphism which is the colimit map of some basic diagram.
\end{defi}

\begin{example}
For every space $M$ the identity map from $|M|$ to $M$ is a subregular epimorphism in $\Met$: see Example \ref{E:bas}. We will see below that the subregular epimorphisms are precisely the surjective ones.
\end{example}

\begin{remark}\label{R:pro}
(1) Every subregular epimorphism $e\colon X\to Y$ is a  quotient (Definition \ref{D:iso}).
 
 In particular:
 $$
 \mbox{subregular epi}\ \Rightarrow\  \mbox{epi}.
 $$
 
(2)  In  an enriched category with coequalizers we have:
$$
 \mbox{regular epi}\ \Rightarrow \mbox{\ subregular epi}.
 $$
Indeed, if $f\colon X\to Y$ is a coequalizer of $p$, $q\colon P\to X$, then we define $D\colon \cb \to \ck$ by $D\var =P$ for all $\var$ and $Da=X$. Put $D(\var \to \var')=\id_{P}$ and $Dl_\var =p$ and $D r_\var =q$ for all $\var$. Then $f$ is the colimit  map of $ D$. 
\end{remark}

Observe that kernel diagrams are unique up-to natural isomorphism: functors $D, D'\colon \cb \to \ck$ are kernel diagrams of the same morphism iff it they are naturally isomorphic. 

\begin{lemma}\label{L:pro} 
Let $\ck$ be an enriched category with $\var$-kernel pairs.
Every subregular epimorphism is the colimit map of its kernel diagram.
\end{lemma}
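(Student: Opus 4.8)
The plan is to show that a subregular epimorphism $e \colon X \to Y$, being by Definition \ref{D:subr} the colimit map of \emph{some} basic diagram $D \colon \cb \to \ck$, is automatically the colimit map of the specific basic diagram $\bar D$ obtained as its own kernel diagram (Definition \ref{D:ker}). The key observation is that both colimit maps are characterized by the same universal property via Proposition \ref{P:COM}, so it suffices to compare the compatibility conditions \thetag{COM} they impose. First I would form the kernel diagram $\bar D$ of $e$, so that $\bar D\var$ together with $\bar Dl_\var, \bar Dr_\var$ is the $\var$-kernel pair of $e$ for each $\var$, and $\bar Da = X = Da$.

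Next I would compare the two diagrams. By the universal property of the $\var$-kernel pair, since $e$ is the colimit map of $D$ we have $d(e \cdot Dl_\var, e \cdot Dr_\var) \le \var$ by \thetag{COM}, hence the pair $(Dl_\var, Dr_\var)$ factorizes through the $\var$-kernel pair $(\bar Dl_\var, \bar Dr_\var)$ via some $k_\var \colon D\var \to \bar D\var$. Conversely, I would use that $e$ is universal with respect to \thetag{COM} for $D$: to exhibit a morphism $\bar e \colon X \to \bar C$ with $\bar C = \colim_B \bar D$ I need to check $e$ satisfies the compatibility condition \thetag{COM} for $\bar D$, i.e.\ $d(e \cdot \bar Dl_\var, e \cdot \bar Dr_\var) \le \var$; but this is immediate from the defining property of the $\var$-kernel pair of $e$. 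So $e$ factorizes through the colimit map $\bar e$ of $\bar D$, say $e = g \cdot \bar e$. In the other direction, the colimit map $\bar e$ of $\bar D$ satisfies \thetag{COM} for $\bar D$, and precomposing the factorizations $Dl_\var = \bar Dl_\var \cdot k_\var$, $Dr_\var = \bar Dr_\var \cdot k_\var$ shows that $\bar e$ also satisfies \thetag{COM} for $D$; hence by universality of $e$ (as colimit map of $D$) we get $\bar e = h \cdot e$ for some $h$.

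It then remains to see that $g$ and $h$ are mutually inverse, which follows because $e$ and $\bar e$ are both quotients (Remark \ref{R:pro}(1)), hence epimorphisms: from $e = g \cdot h \cdot e$ and $e$ epi we get $g \cdot h = \id$, and from $\bar e = h \cdot g \cdot \bar e$ and $\bar e$ epi we get $h \cdot g = \id$. Thus $Y \cong \bar C$ compatibly with $e$ and $\bar e$, which is exactly the statement that $e$ is the colimit map of its kernel diagram. Throughout I would be a little careful that the factorization $k_\var$ is compatible with the connecting maps $D(\var \to \var')$, so that the $k_\var$ assemble into a genuine morphism of basic diagrams; this is where the weighted-colimit bookkeeping lives.

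The main obstacle I expect is precisely this last bookkeeping point: verifying that the comparison maps $k_\var$ between the $\var$-kernel pairs of $e$ and the components $D\var$ of the given diagram are natural in $\var$ (commute with the maps $D(\var \to \var')$ and $\bar D(\var \to \var')$), so that one really has a morphism of diagrams over $\cb$ inducing the colimit comparison, rather than just a family of maps at each stage. Once naturality is in hand, everything reduces to the uniqueness of objects representing the same universal property, which is routine.
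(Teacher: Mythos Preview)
Your core idea is right, but the argument as written assumes something not given: you take $\bar C = \colim_B \bar D$ and its colimit map $\bar e$ as already existing, and then compare $Y$ with $\bar C$. The hypotheses only supply $\var$-kernel pairs, not basic colimits in general; the existence of $\colim_B \bar D$ is part of what the lemma asserts. So the two-colimit comparison cannot get started.

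The fix is to drop the auxiliary colimit and verify directly, via Proposition~\ref{P:COM}, that $(Y,e)$ has the universal property for $\bar D$. You already have the two ingredients. First, $e$ is a quotient (Remark~\ref{R:pro}(1)), and $e$ satisfies \thetag{COM} for $\bar D$ by the very definition of $\var$-kernel pair. Second, for universality: given any $f' \colon X \to Y'$ satisfying \thetag{COM} for $\bar D$, precompose with your maps $k_\var$ (obtained from $d(e\cdot Dl_\var, e\cdot Dr_\var)\le \var$) to see that $f'$ also satisfies \thetag{COM} for $D$, hence factorizes through $e$. This is exactly the paper's proof, and it is essentially the ``$\bar e$ factorizes through $e$'' half of your argument, applied to an arbitrary compatible $f'$ in place of $\bar e$.

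Finally, your anticipated obstacle---naturality of the $k_\var$ in $\var$---is a non-issue. The condition \thetag{COM} is checked for each $\var$ separately, and each $k_\var$ is used only to transfer that single inequality; no morphism of basic diagrams is needed. So the bookkeeping you flag simply does not arise.
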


\begin{proof}
Let $f\colon X\to Y$ be a subregular epimorphism, i.e.\ the colimit map of some $D\colon \cb \to \ck$ weighted by $B$. Denote by $D_f\colon \cb \to \ck$ the kernel diagram.
We know that $f$ is a  quotient.
 We thus only need to prove that every morphism $f'\colon X \to Y'$ satisfying \thetag{COM} for $D_f$:
\begin{equation}\label{e1}
d(f'\cdot D_f l_\var, f' \cdot D_f r_\var)\leq \var \qquad (\mbox{for all $\var$}).
\end{equation}
factorizes through $f$. For that it is sufficient to verify that \eqref{e1} implies (COM) for $D$:
\begin{equation}\label{e2}
d(f'\cdot Dl_\var, f' \cdot D r_\var)\leq \var \qquad (\mbox{for all $\var$}).
\end{equation}

Since $D_f l_\var$, $D_f r_\var$ is the $\var$-kernel pair of $f$,
and $d(f\cdot Dl_\var, f \cdot D r_\var)\leq \var$, there exists $h_\var \colon D\var \to D_f\var$ with $Dl_\var = D_f f_\var \cdot h_\var$ and $Dr_\var = D_f r_\var \cdot h_\var$. Thus from \eqref{e1} we get
\begin{align*}
d(f'\cdot Dl_\var, f' \cdot D r_\var) &= d(f'\cdot D_f l_\var \cdot h_\var , f'\cdot D_f r_\var \cdot h_\var)\\
&\leq d(f'\cdot D_fl_\var, f'\cdot D_f r_\var)\leq \var\,.
\qedhere
\end{align*}
\end{proof}

\begin{remark}\label{R:epi} 
We will  use the following well-known (and easily verified) fact from Universal Algebra: given a surjective homomorphism $e\colon X\to Y$ and a homomorphism $f\colon Y\to Z$ whose underlying map factorizes (in $\Set$) through $e$:
$$
\xymatrix{ 
|X| \ar[d]_f \ar[r]^e & |Y| \ar@{-->}[dl]^h\\
|Z| &
}
$$
it follows  that $h\colon Y\to Z$ is a homomorphism.
\end{remark}

\begin{prop}\label{P:sur} 
Subregular epimorphisms in  $\Met$, and more generally in $\Sigma$-$\Met$, are precisely the surjective morphisms.
\end{prop}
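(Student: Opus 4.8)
The plan is to prove the two inclusions separately, using Proposition \ref{P:COM} throughout so that a basic colimit is handled concretely through its colimit map and the compatibility condition (COM). \textbf{Subregular epimorphisms are surjective.} Let $f\colon X\to Y$ be a subregular epimorphism, i.e.\ the colimit map of some basic diagram $D\colon\cb\to\ck$ with $Da=X$. I would factor $f$ through its image, $f=m\cdot e$, where $e\colon X\to f[X]$ is the surjective corestriction onto the subspace $f[X]\subseteq Y$ (a subalgebra in the $\Sigma$-$\Met$ case, with the operations restricted) and $m\colon f[X]\hookrightarrow Y$ is the inclusion, which is an isometric embedding. Since an isometric embedding leaves the distance of composites unchanged, $d(e\cdot Dl_\var,e\cdot Dr_\var)=d(f\cdot Dl_\var,f\cdot Dr_\var)\le\var$ for every $\var$, so $e\colon X=Da\to f[X]$ also satisfies (COM) for $D$. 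By the universal property of the colimit map $f$ there is a morphism $g\colon Y\to f[X]$ with $g\cdot f=e$, hence $m\cdot g\cdot f=m\cdot e=f$. As $f$ is a quotient, hence an epimorphism (Remark \ref{R:pro}(1)), this forces $m\cdot g=\id_Y$; since $m$ is the inclusion of $f[X]$, such a split epimorphism is possible only if $f[X]=Y$, i.e.\ $f$ is surjective.

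\textbf{Surjective morphisms are subregular epimorphisms.} Given a surjective $e\colon X\to Y$, I would show that $e$ is the colimit map of its own kernel diagram $D_e$ (Definition \ref{D:ker}), verifying the three requirements of Proposition \ref{P:COM}. First, $e$ is a quotient: in $\Met$ this is recorded in Definition \ref{D:iso}(2), and in $\Sigma$-$\Met$ it follows because surjectivity of $e$ makes $\sup_{x\in X}d(u_1 e(x),u_2 e(x))=\sup_{y\in Y}d(u_1(y),u_2(y))$ for all parallel $u_1,u_2\colon Y\to U$. Second, $e$ satisfies (COM) for $D_e$ directly from the description of the $\var$-kernel pair of $e$ as the pairs $(x_1,x_2)$ with $d(e(x_1),e(x_2))\le\var$. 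Third, for universality, let $f'\colon X\to Y'$ satisfy $d(f'\cdot D_e l_\var,f'\cdot D_e r_\var)\le\var$ for all $\var$; the case $\var=0$ says $f'$ is constant on the ordinary kernel pair of $e$, so by surjectivity of $e$ there is a unique map $h$ of underlying sets with $f'=h\cdot e$, and for $y_1,y_2\in Y$ with chosen preimages $x_1,x_2$ the pair $(x_1,x_2)$ lies in the $d(y_1,y_2)$-kernel pair of $e$, so (COM) at $\var=d(y_1,y_2)$ gives $d(h(y_1),h(y_2))\le d(y_1,y_2)$, showing $h$ nonexpanding; in $\Sigma$-$\Met$, Remark \ref{R:epi} then promotes $h$ to a homomorphism. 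Hence $e$ is the colimit map of $D_e$, and so a subregular epimorphism.

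The step I expect to be the main obstacle is the first inclusion. A priori a subregular epimorphism is only known to be an epimorphism (Remark \ref{R:pro}(1)), and epimorphisms in $\Met$ and $\Sigma$-$\Met$ are far from surjective: dense isometric embeddings are epi. The essential idea is the image factorization combined with the observation that the image projection $e$ inherits (COM) because the inclusion $m$ is an isometric embedding, which lets the universal property of the colimit map collapse the image back onto $Y$. The converse is comparatively routine; its only delicate point is the verification that the induced set map $h$ is nonexpanding, and that is precisely where the particular weight defining $\var$-kernel pairs is used.
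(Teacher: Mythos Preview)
Your proof is correct and follows essentially the same route as the paper: both directions use the image factorization with the universal property of the colimit map for the forward implication, and the kernel diagram with the $\var=0$ case followed by the nonexpansiveness check for the converse. Your write-up is in fact slightly more explicit than the paper's in verifying that a surjection is a quotient and in invoking the isometric-embedding property of $m$ to transfer (COM) to the image projection.
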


\begin{proof}
Since $\Met$ is a special case of $\Sigma$-$\Met$ (with $\Sigma=\emptyset$), we provide a proof for  $\Sigma$-$\Met$.

(1) Let $f\colon X\to Y$ be a subregular epimorphism. For the subspace $m\colon Y' \hookrightarrow Y$ on the image of 
$f$, let $f'\colon X\to Y'$ be the codomain restriction of $f$. Then $Y'$ is a subalgebra of $Y$: given $\sigma \in \Sigma_n$ and an $n$-tuple $y_i = f(x_i)$ in  $Y'$ we have $\sigma_Y  (y_i) = f\big(\sigma_X(x_i)\big) \in Y'$. Also, $f'$ is  a nonexpanding homomorphism (since $Y'$ is a subspace of $Y$). Moreover, if $f$ is the colimit map of $\colim\limits_B D$,
then $f'$ satisfies \thetag{COM}: from $d(f\cdot Dl_\var, f\cdot Dr_\var)\leq \var$ we clearly get $d(f'\cdot Dl_\var, f'\cdot Dr_\var) \leq \var$.
Consequently, $f'$ factorizes through $f$:
$$
\xymatrix@=4pc{ 
X \ar[d]_{f'} \ar[r]^f & Y \ar@<0.6ex>[dl]^h\\
Y'\ar@<0.6ex>@{^{(}->}[ur]^m &
}
$$
via some $h\colon Y\to Y'$ in $\Sigma$-$\Met$. Since
$m$ and $h$ are homomorphisms, from 
$$
(m\cdot h)\cdot  f = m\cdot f' =f
$$
and the fact that $f$ is a quotient we get  $m\cdot h =\id$. Thus $Y'=Y$, and  $f$ is surjective.

\vskip 1mm
(2) Let $f\colon X\to Y$ be a surjective morphism of  $\Sigma$-$\Met$. It satisfies \thetag{COM} with respect to its  kernel diagram $D\colon \cb \to \Sigma$-$\Met$. We prove that for every $f'\colon X\to Y$ satisfying that condition, too:
\begin{equation*}
d(f'\cdot Dl_\varepsilon, f'\cdot Dr_\varepsilon) \leq \varepsilon
\tag{C}
\end{equation*}
there is a factorization through $f$ in $\Sigma$-$\Met$. Then $f$ is clearly universal, hence, a subregular epimorphism.

For $\var =0$ the  condition \thetag{C} yields  the following implication:
$$
f(x_1) = f(x_2) \quad \mbox{implies}\quad  f'(x_1) = f'(x_2) \qquad (x_1, x_2\in X)\,.
$$
We thus get a well-defined map $h\colon |Y'|  \to |Y|$ by
$$
h\big( f(x)\big) = f'(x) \quad \mbox{for all} \quad x\in X\,.
$$
Moreover, $h$ is a homomorphism by Remark \ref{R:epi} because $h\cdot f=f'$ is  a homomorphism, and $f$  is a surjective homomorphism. Finally, $h$ is nonexpanding. Indeed, given $y_1$, $y_2\in Y$ of distance $\var$, we choose $x_i \in X$ with $y_i = f(x_i)$, and since $d\big( f(x_1), f(x_2)\big) =\var$, we conclude
$$
(x_1, x_2) \in D\var\,.
$$
Apply \thetag{C} above to this element to get
$$
d\big( f'(x_1), f'(x_2)\big)\leq\var
$$
which means 
$$
d\big( h(y_1), h(y_2)\big) =\var\,,
$$
as desired. Thus $h$ is a morphism of $\Sigma$-$\Met$ with $f' = h\cdot f$.
\end{proof}

\begin{remark}\label{R:tt}
	For every surjective morphism $f\colon M\to N$ in $\Met$ the morphism $f\otimes G\colon M\otimes G\to N\otimes G$ is a subregular epimorphism. Indeed, we have a diagram $D\colon \cd \to \Met$ with the colimit map $f$. The diagram $D\otimes G\colon \cb \to \ck$ given by tensoring with $G$ has the colimit map $f\otimes G$ because $-\otimes G$ is a left adjoint.
\end{remark}

We are going to define subcongruences as basic diagrams with properties analogous to cogruences in ordinary categories (Definition \ref{R:cong}). There is an additional continuity condition derived from the continuity of the assignment of $D\var$
to each $\var$.
 
	\begin{defi}\label{D:proeq} 
	Let $X$ be an object of a metric-enriched category $\ck$. A \emph{subcongruence} on $X$ is a basic diagram $D\colon \cb \to \ck$ with $Da = X$ such that  all pairs $Dl_{ \var}, Dr_\var$ are relations  (Definition \ref {D:iso}) satisfying
	the following conditions (for all numbers $\var$ and $\var  '$):
	
	(a) Reflexivity: $Dl_\var$, $Dr_\var \colon D\var \to X$ is $\var$-reflexive (Definition \ref{D:eps}).
	
	(b) Symmetry: $Dl_\var$, $Dr_\var \colon D\var \to X$ is symmetric.
	
	(c) Transitivity:  Given morphisms  $u_0, u_1, u_2 \colon S \to X$ such that 
	$(u_0, u_1)$ factorizes through $(Dl_\var , Dr_\var)$ and  $(u_1,u_2)$ factorizes through $(Dl_{\var'}, Dr_{\var'})$, then $(u_0, u_2)$ factorizes through $(Dl_{\var + \var '}, Dr_{\var + \var '})$.
	
	(d) Continuity: $D\var$ is the limit of the following $\omega^{\op}$-sequence in $\ck$:

	$$
	 D(\var+1) \leftarrow D(\var+1/2) \leftarrow D(\var+1/3)\cdots
	$$
	with the limit cone given by
	$$
	D(\var \to \var+1/n)\colon D\var \to D(\var +1/n) \qquad (n=1,2,3,\dots).
	$$
	
\end{defi}

\begin{remark}\label{R:pb}
Transitivity simplifies, whenever pullbacks exist, as follows:
 given the pullback $P$ of $Dr_\var$ and $Dl_{\var'}$ (on the left), there is a morphism $p$ making the diagram on the right commutative:
$$
\xymatrix@C=0.6pc@R=1.7pc{
	&&P\ar[dl]_{\bar l}\ar@{}[d]|{\vee} \ar[dr]^{\bar r}&&\\
	& D\var\ar[dl]_{Dl_\var} \ar[dr]_{Dr_\var}&& D\var' \ar [dl]^{Dl_{\var'}} \ar[dr]^{Dr_{\var'}}&\\
	X &&X&&X
}
\qquad
\xymatrix@C=0.2pc@R=1.7pc{
	&&P\ar	[dl]_{\bar l}\ar[dd]_p \ar [dr]^{\bar r}&&\\
	& D\var\ar[dl]_{Dl_\var}&& D\var'  \ar[dr]^{Dr_{\var'}}&\\
	X && D(\var+\var')\ar[ll]^{Dl_{\var+\var'}} \ar[rr]_<<<<<{Dr_{\var+\var'}}&&X
}
$$

\end{remark}

\begin{example} 
Every kernel diagram is a subcongruence. Indeed,  let $D$ be the kernel diagram  of $f\colon X\to Y$.

(1) The $\var$-kernel pair  $Dl_\var$, $Dr_\var$ of $f$ is $\var$-reflexive and symmetric (Remark \ref{R:ker}).

(2) Transitivity: If $u_0, u_1$ factorizes through the $\var$-kernel pair of $f$, then 
$d(f \cdot u_0 , f\cdot u_1) \leq \var$.  Analogously $d(f\cdot u_1 , f \cdot u_2) \leq \var'$. The triangle inequality yields $d(f \cdot u_0 , f \cdot u_2) \leq \var + \var'$, thus $(u_0, u_2)$
factorizes through the $(\var + \var')$-kernel pair of $f$, as claimed.

(3) Continuity: Let a cone of the above $\omega ^{op}$-sequence be given:
$$a_n \colon A \to D(\var + 1/n)$$
for $n=1, 2, 3...$. We present a morphism $a\colon A \to D\var$ with
$$a_n=D(\var \to \var + 1/n) \cdot a$$
for all n. The compatibility of the cone means that
$$a_{n+1} =D([\var + 1/n] \to [\var + 1/(n+1)]) \cdot a_n.$$
This implies that for all $n$ the composite of $a_n$ with $Dl_{\var + 1/n}$
is independent of $n$, and we denote this morphism by
$$\hat l = Dl_{\var + 1/n} \cdot a_n \colon A \to X.$$ Analogously we have
$$\hat r = Dr_{\var + 1/n} \cdot a_n \colon A \to X.$$
The distance of $f \cdot Dl_{\var + 1/n}$ and $f \cdot Dr_{\var + 1/n}$
is at most $\var + 1/n$, from which it follows that
$d(f \cdot \hat l, f \cdot \hat r) \leq \var + 1/n$ for each $n$.
In other words,
$$d(f \cdot \hat l, f \cdot \hat r) \leq \var.$$
Consequently the pair $(\hat l, \hat r)$ factorizes through the
$\var$-kernel pair of $f$: we get $a \colon A \to D\var$ with
$$ \hat l = Dl_\var \cdot a \quad and\ \hat r = Dr_\var \cdot a.$$
The desired equality $a_n = D(\var \to \var + 1/n) \cdot a$ now follows from
the fact that the pair $Dl_{\var + 1/n}, Dr_{\var + 1/n}$ is a relation,
thus collectively monic. We have
$$Dl_{\var + 1/n} \cdot a_n = \hat l =Dl_\var \cdot a=Dl_{\var + 1/n} \cdot
D(\var \to \var + 1/n) \cdot a.$$
Analogously for $Dr_{\var + 1/n}$. 
Hence, $D\var$ is the limit od $D(\var + 1/n).$

In order to verify that the above limit is conical, consider a parallel pair
$u_i \colon U \to A (i= 0,1)$. We are to show that its distance is the
infimum of the distances of $D(\var \to \var + 1/n) \cdot u_i$ for all
$n= 1,2,3...$. This follows easily from $\var = inf (\var + 1/n)$ and
the fact that the pair $Dl_\var, Dr_\var$ is isometric (Definition \ref{D:iso}).

\end{example}

\vspace{2mm}

In the following construction we use (extended) \textit{pseudometrics} (defined as metrics, except that distance $0$ between distinct elements is allowed).

\begin{remark} \label{R:pseudo}
For every pseudometric space $(X,d)$ the \textit{metric reflection}  is the quotient space
$$
f\colon (X,d) \to (X,d)/\sim
$$
 modulo the equivalence $x_1\sim x_2$ iff $d(x_1, x_2)=0$.
Observe that the quotient map $f$ preserves distances of  elements.
\end{remark}

\begin{odst}\label{C:eq} \textbf{Construction} of basic colimits of subcongruences $D\colon \cb \to \Met$. Put $Da = (M, d)$. The following defines a pseudometric $\hat d$ on the set $M$ 
$$
\hat d(x,y) = \inf \{\var; x = Dl_\var (t) \ \mbox{and}\ y= Dr_\var (t) \ \mbox{for some}\ t\in D\var\}\,.
$$
The colimit map
$$
f\colon (M,d) \to C
$$
of $D$ is the metric reflection of the pseudometric space $(M,\hat d)$: we have $C= (M, \hat d)/\sim$.
\end{odst}
Before proving this statement, we need an auxilliary fact about $\hat{d}$. The infimum defining $\hat d(x,y)$ is actually a minimum (whenever that distance is finite):

\begin{lemma}
\label{L:help}

Given elements $x, y$ of $M$ with $\hat d (x,y)=\var$, there exists  $t\in D\var$
with $x= Dl_\var (t)$ and $y= Dr_\var (t)$.
\end{lemma}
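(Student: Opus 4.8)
The plan is to realize the infimum defining $\hat d(x,y)$ as an actual minimum by exploiting the continuity condition (d) of Definition~\ref{D:proeq}. Since $\hat d(x,y)=\var$ is assumed finite, for each $n=1,2,3,\dots$ there is a number $\var_n$ belonging to the set over which that infimum is taken with $\var\le\var_n<\var+1/n$; fix $t_n\in D\var_n$ with $Dl_{\var_n}(t_n)=x$ and $Dr_{\var_n}(t_n)=y$. Pushing $t_n$ forward along the connecting morphism $D(\var_n\to\var+1/n)$ (available because $\var_n\le\var+1/n$) and using the identities $Dl_{\var_n}=Dl_{\var+1/n}\cdot D(\var_n\to\var+1/n)$ and $Dr_{\var_n}=Dr_{\var+1/n}\cdot D(\var_n\to\var+1/n)$, which hold since $l,r$ are cocones in $\cb$, I may assume from the start that $t_n\in D(\var+1/n)$ with $Dl_{\var+1/n}(t_n)=x$ and $Dr_{\var+1/n}(t_n)=y$.

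Next I would verify that the family $(t_n)$ is a cone over the $\omega^{\op}$-sequence $D(\var+1)\leftarrow D(\var+1/2)\leftarrow\cdots$ appearing in Definition~\ref{D:proeq}(d), i.e.\ that $D(\var+1/(n+1)\to\var+1/n)(t_{n+1})=t_n$ for every $n$. This is precisely where the hypothesis that each pair $Dl_\var,Dr_\var$ is a relation (Definition~\ref{D:iso}), and hence collectively monic, does the work: applying $Dl_{\var+1/n}$ to $D(\var+1/(n+1)\to\var+1/n)(t_{n+1})$ yields $Dl_{\var+1/(n+1)}(t_{n+1})=x$, applying $Dr_{\var+1/n}$ yields $y$, and exactly the same two values arise from $t_n$; so the two elements of $D(\var+1/n)$ must coincide.

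Finally, by continuity (d) the object $D\var$ is the limit of this $\omega^{\op}$-sequence with limit cone $\big(D(\var\to\var+1/n)\big)_{n}$, so the cone $(t_n)$ factorizes through it: there is a (unique) $t\in D\var$ with $D(\var\to\var+1/n)(t)=t_n$ for all $n$. Then $Dl_\var(t)=Dl_{\var+1/n}\cdot D(\var\to\var+1/n)(t)=Dl_{\var+1/n}(t_n)=x$, and similarly $Dr_\var(t)=y$, which is what we wanted.

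The only genuinely delicate point is the cone condition in the middle step: nothing forces the independently chosen witnesses $t_n$ to be compatible with the connecting morphisms, and the sole reason they turn out to be is that each pair $(Dl_{\var+1/n},Dr_{\var+1/n})$ reflects equality. Everything else is routine bookkeeping with the factorization morphisms $D(\var\to\var')$, and the statement "$\var$ finite" is exactly what guarantees the auxiliary numbers $\var_n$ exist.
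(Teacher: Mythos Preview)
Your proof is correct and follows essentially the same route as the paper's: both obtain witnesses $t_n\in D(\var+1/n)$ (the paper via an explicit upwards-closed set $S$, you by pushing forward along $D(\var_n\to\var+1/n)$), use collective monicity of $(Dl_{\var+1/n},Dr_{\var+1/n})$ to force compatibility of the $t_n$'s, and then invoke the continuity condition to extract the desired $t\in D\var$. Your identification of the ``delicate point'' matches the paper's key observation exactly.
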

\begin{proof}
We denote by $S$ the set of all numbers $\delta \geq 0$ for which there exists $t \in D\delta$ with
\begin{equation} x = Dl_\delta (t) \quad and\  y= Dr_\delta(t). 
\tag{*}
\end{equation}
Observe that $t$ is unique since $ Dl_\delta,  Dr_\delta$ is a relation, thus a collectively monic pair. Our task is to prove that 
$\var$ lies in $S$.

The set $S$ is upwards closed: given $\delta \leq {\delta}'$, 
the element ${t}'= D(\delta \to {\delta)}' (t)$ fulfils \thetag{*}.
It follows that $S$ contains $\var + 1/n$ for all $n= 1,2,3 \cdots$. The corresponding elements $t_n$ of $D(\var + 1/n)$
are compatible with the $\omega ^{op}$-chain $D(\var + 1/n)$ with the connecting maps
$$f_n = D( [\var + 1/(n+1)] \to [\var + 1/n].$$
In fact, for each $n$ we have
$$t_n = f_n (t_{n+1}),$$
this follows from the uniqueness of $t$ in  \thetag{*}.

Limits in $\Met$ are $\Set$-based. By Continuity we have $D\var = lim D(\var + 1/n)$, 
thus for the compatible collection $t_n$
there exists a unique $t$ in $D\var$ with 
$$t_n = D( \var \to [\var + 1/n] )(t) \hspace{5mm} (n=1,2,3...).$$
In particular, $t_1 = D( \var \to [\var + 1] )(t)$.

The element $t$ fulfils  \thetag{*} for $\var$. Indeed the compatibility of the cone of all $Dl_\var$'s yields
$$Dl_\var = Dl_{\var  +1} \cdot D(\var \to [\var +1]),$$ 
therefore
$$Dl_\var (t)=Dl_{\var +1} (t_1) =x.$$
Analogously for $Dr_\var$. This proves that $\var$ lies in $S$, as claimed.  

\end{proof}

\begin{prop}
The morphism $f$ in the above Construction is the colimit map of the given subcongruence.
\end{prop}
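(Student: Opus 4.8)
The plan is to verify that the morphism $f \colon (M,d) \to C = (M,\hat d)/\!\sim$ constructed in \ref{C:eq} satisfies the universal property characterizing the basic colimit of the subcongruence $D$, as spelled out in Proposition \ref{P:COM}: namely that $f$ is a quotient satisfying the compatibility condition \thetag{COM}, and that every morphism $f' \colon Da \to C'$ satisfying \thetag{COM} factorizes uniquely through $f$. I would organize the proof into three parts: (i) $f$ satisfies \thetag{COM}; (ii) $f$ is a quotient; (iii) $f$ has the required universal factorization property.

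\medskip\noindent\textbf{Part (i): $f$ satisfies \thetag{COM}.} Here I would use the fact, established in Lemma \ref{L:help}, that the infimum defining $\hat d$ is attained. For a fixed $\var$ and any $t \in D\var$, set $x = Dl_\var(t)$ and $y = Dr_\var(t)$; then by the very definition of $\hat d$ we have $\hat d(x,y) \leq \var$. Since the metric reflection map preserves distances (Remark \ref{R:pseudo}), this gives $d\big(f(Dl_\var(t)), f(Dr_\var(t))\big) \leq \var$ for all $t \in D\var$, which is exactly $d(f \cdot Dl_\var, f \cdot Dr_\var) \leq \var$ in the supremum metric of $\Met$. (If the distance is $\infty$ this is automatic.)

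\medskip\noindent\textbf{Part (ii): $f$ is a quotient.} Since $f$ is surjective (it is the metric reflection of $(M,\hat d)$, whose underlying set is $M$), this is immediate from the remark after Definition \ref{D:iso}: every surjective morphism in $\Met$ is a quotient.

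\medskip\noindent\textbf{Part (iii): universality.} Given $f' \colon (M,d) \to C'$ with $d(f' \cdot Dl_\var, f' \cdot Dr_\var) \leq \var$ for all $\var$, I must produce $h \colon C \to C'$ with $h \cdot f = f'$; uniqueness is then clear because $f$ is epic. On underlying sets, $|C| = M/\!\sim$, so I would first check that $f'$ is constant on $\sim$-classes: if $\hat d(x,y) = 0$ then, using Lemma \ref{L:help} again, there is $t \in D0$ with $x = Dl_0(t)$, $y = Dr_0(t)$; applying \thetag{COM} for $f'$ at $\var = 0$ gives $f'(x) = f'(y)$. (More generally, the same argument with Lemma \ref{L:help} shows $d\big(f'(x), f'(y)\big) \leq \hat d(x,y)$ for all $x,y$, which is exactly what is needed for $h$ to be nonexpanding.) Hence $h([x]) = f'(x)$ is well defined, satisfies $h \cdot f = f'$ on underlying sets, and is nonexpanding since $f$ preserves distances of $(M,\hat d)$ down to $C$ and $d(hf(x), hf(y)) = d(f'(x),f'(y)) \leq \hat d(x,y) = d(f(x),f(y))$.

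\medskip\noindent\textbf{The main obstacle} I anticipate is precisely the verification that $h$ is nonexpanding, i.e.\ the inequality $d\big(f'(x), f'(y)\big) \leq \hat d(x,y)$; this is where Lemma \ref{L:help} does the real work, since it converts the infimum in the definition of $\hat d$ into an honest witness $t \in D\var$ to which condition \thetag{COM} for $f'$ can be applied directly. Without the attainment of the infimum one would only get $d(f'(x),f'(y)) \leq \var$ for every $\var > \hat d(x,y)$, which of course still yields the bound by taking the limit — so in fact Lemma \ref{L:help} is a convenience here rather than strictly essential for Part (iii), but it streamlines the argument and is genuinely needed elsewhere. Everything else is a routine unwinding of the adjunction between $\Met$ and pseudometric spaces via metric reflection.
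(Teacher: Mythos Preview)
Your Parts (i)--(iii) form a sound outline \emph{once $f$ is known to be a morphism of $\Met$}, but you have silently assumed two things that the Proposition is meant to establish. First, you never verify that $\hat d$ is a pseudometric; Construction~\ref{C:eq} merely asserts this, and the paper's proof spends its Part~(1) checking it: $\hat d(x,x)=0$ via the reflexive splitting of $(Dl_0,Dr_0)$, symmetry of $\hat d$ via symmetry of the pairs $(Dl_\var,Dr_\var)$, and---the nontrivial step---the triangle inequality via the transitivity axiom of the subcongruence (Remark~\ref{R:pb}). Second, you never verify that $f\colon (M,d)\to C$ is nonexpanding, i.e.\ that $d\geq \hat d$; the paper obtains this from $\var$-reflexivity (if $d(x,y)=\var$, the pair $x,y\colon I\to (M,d)$ factors through $(Dl_\var,Dr_\var)$, whence $\hat d(x,y)\leq\var$). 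A telling symptom: your argument as written invokes none of axioms (a)--(c) of Definition~\ref{D:proeq}, only continuity (through Lemma~\ref{L:help}), yet (a)--(c) are precisely what make the construction well-defined.

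Once those gaps are filled, your Part~(iii) is a legitimate and somewhat more direct alternative to the paper's route: the paper instead shows that $D$ is naturally isomorphic to the kernel diagram $D_f$ of $f$ (using Lemma~\ref{L:help}) and then concludes via Lemma~\ref{L:pro} and Proposition~\ref{P:sur}. Your direct verification of the universal factorization avoids that detour; the paper's approach, on the other hand, yields the extra information $D\cong D_f$, which is reused later (e.g.\ in Proposition~\ref{P:eff1}).
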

\begin{proof}
(1) We first verify that $\hat d$ is a pseudometric.

(1a) $\hat d (x, x) =0$ because the pair $Dl_0$, $Dr_0$ is reflexive. Given $m\colon Da \to D0$ with $Dl_0 \cdot m=\id= Dr_0\cdot m$, put $t=m(x)$.
Then $x= Dl_0(t) = Dr_0(t)$.

(1b) The function  $\hat d$ is symmetric because all the pairs $Dl_\var$, $Dr_\var$ are symmetric.

(1c)  To verify the triangle inequality for $x$, $y$, $z\in M$, we prove
$$
\hat d(x,z) < \hat d(x,y) + \hat d(y,z) +\delta\quad \mbox{for each\quad} \delta>0\,.
$$
From the definition of $\hat d(x,y)$ it follows that there exists
$
\var <\hat d (x,y) + \frac{\delta}{2}
$
such that
$$
x = Dl_{\var} (t) \quad \mbox{and}\quad y= Dr_{\var}(t) \quad \mbox{for some}\quad  t\in D\var\,.
$$
Analogously there exists $\var' <\hat d (y,z) +\frac{\delta}{2}$ with
$$
y= Dl_{\var'} (s) \quad \mbox{and} \quad z  = Dr_{\var'}(s) \qquad \mbox{for some} \ \ s\in D\var'\,.
$$
Use the pullback $P$ in Remark \ref{R:pb}. Since
$$
Dr_{\var} (t) = y = Dl_{\var'} (s)\,,
$$
there exists $a\in P$ with
$$
t= \bar l (a) \quad \mbox{and} \quad s= \bar r(a)\,.
$$
The element
$$
b= p(a) \in D(\var + \var')
$$
then fulfils $x= Dl_{\var + \var'}$, since $Dl_{\var}\cdot \bar l= Dl_{\var+\var'} \cdot p$:
$$
x= Dl_{\var} (t) = Dl_{\var}\big(\bar l(a)\big) = Dl_{\var + \var'} \big(p(a)\big)=
Dl_{\var + \var'}(b)\,.
$$
Analogously,
$$
z= Dr_{\var + \var'}(b)\,.
$$
This proves  $ \hat d(x,z)\leq{\var + \var'}$, and we have
$$
\hat{d} (x,z) \leq\var +\var' <\hat d(x,y) +\frac{\delta}{2} + \hat d(y,z) +\frac{\delta}{2}\,,
$$
as required.

\vskip 1mm
(2) Put $C= (\bar M, \bar d)$. We next observe that 
the metric $d$ of $Da$ fulfils
$d\geq \hat d$, thus $f\colon (M,d) \to (\bar M, \bar d)$ is nonexpanding.

Given a pair  of elements $x_1, x_2\in M$ represented by morphisms  $u_1$, $u_2\colon 1\to (M,d)$ with $d(u_1, u_2) =\var$, there exists by $\var$-reflexivity a factorization $v\colon 1\to D\var$ with $u_1 = Dl_\var \cdot v$ and $u_2 = Dl_\var \cdot v$. In other words:   there exists $t\in D\var$ with $x_1 = Dl_\var (t)$ and $x_2 = Dr_\var (t)$. Which means $\hat d(x_1, x_2)\leq \var$. Thus $d(x_1, x_2) \geq \hat d(x_1, x_2)$.

\vskip 1mm
(3) We prove that the kernel diagram $D_f$ of $f$ is naturally isomorphic to $D$. The proof then follows from Lemma \ref{L:pro} since $f$ is a subregular epimorphism (Proposition \ref{P:sur}).

 Recall that the subspace $D_f \var$
consists of all pairs $x$, $y$ satisfying $d(f(x), f(y)) \leq \var$. That is, $d(x,y) \leq \var$, see Remark \ref{R:pseudo}. By the above lemma, these are the pairs
with $x=Dl_\var (t)$ and $y=Dr_\var (t)$ for some $t \in D\var$. Moreover, this $t$ is then uniqute, since $Dl_\var, Dr_\var$ is a 
relation (thus a collectively monic pair). This defines a natural transformation $u \colon D_f \to D$ whose $\var$-component is given by $(x, y) \mapsto t$. The inverse natural transformation from $D$ to $D_f$ has the $\var$-component
assigning to $t$ the pair $(Dl_\var (t), D(r_\var (t))$.
\end{proof}

\begin{prop}\label{P:eq} 
Colimits of subcongruences commute in $\Met$ with finite products. Explicitly:
let $D_i\colon\cb \to \Met$ be subcongruences  with colimit maps $f_i\colon D_i a \to C_i$ 
($i=1,2$). Then the basic diagram $D_1\times D_2\colon \cb\to \Met$ has the colimit map $f_1\times f_2$.
\end{prop}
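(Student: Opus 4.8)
The plan is to compute the colimit map of $D_1\times D_2$ straight from the explicit Construction \ref{C:eq} of basic colimits of subcongruences, and then to recognise the resulting metric reflection as $C_1\times C_2$.

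First I would record that $D_1\times D_2$ is formed pointwise: $(D_1\times D_2)a=D_1a\times D_2a$, $(D_1\times D_2)\var=D_1\var\times D_2\var$, with leg maps $D_1l_\var\times D_2l_\var$ and $D_1r_\var\times D_2r_\var$. Then I would check that $D_1\times D_2$ is again a subcongruence, so that the Construction applies to it. Every clause of Definition \ref{D:proeq} transfers coordinatewise: since products in $\Met$ carry the maximum metric, the projections are nonexpanding, whence $\var$-reflexivity and symmetry of the factors yield those of the product (factor a given pair through each $D_il_\var,D_ir_\var$ and pair the two factorizations), transitivity is likewise coordinatewise, the pair $D_1l_\var\times D_2l_\var,\ D_1r_\var\times D_2r_\var$ is isometric because a product (with maximum metrics) of the two isometric embeddings $\langle D_il_\var,D_ir_\var\rangle$ is again an isometric embedding, and Continuity holds because $\times$ preserves the relevant $\omega^{\op}$-limits (for any $Z$ one has $\Met\big(Z,\lim_n(A_n\times B_n)\big)\cong\lim_n\Met(Z,A_n\times B_n)\cong\Met\big(Z,\lim_nA_n\times\lim_nB_n\big)$). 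All of this is routine.

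Now write $D_ia=(M_i,d_i)$ and let $\hat d_i$ be the pseudometric on $M_i$ produced by Construction \ref{C:eq} for $D_i$; by the proposition following that construction, $f_i$ is the metric reflection of $(M_i,\hat d_i)$, in particular surjective, with $C_i=(M_i,\hat d_i)/{\sim_i}$. Applying the same construction to $D_1\times D_2$, its colimit map is the metric reflection of $(M_1\times M_2,\hat d)$, where, unravelling $t=(t_1,t_2)\in D_1\var\times D_2\var$,
\[
\hat d\big((x_1,x_2),(y_1,y_2)\big)=\inf\{\var:\ x_i=D_il_\var(t_i)\text{ and }y_i=D_ir_\var(t_i)\text{ for some }t_i\in D_i\var,\ i=1,2\}.
\]
The heart of the proof is the identity $\hat d=\max(\hat d_1,\hat d_2)$. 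The inequality ``$\ge$'' is immediate, since any $\var$ feasible for $\big((x_1,x_2),(y_1,y_2)\big)$ is feasible for each pair $(x_i,y_i)$ and hence bounds $\hat d_i(x_i,y_i)$. For ``$\le$'' set $\var=\max(\hat d_1(x_1,y_1),\hat d_2(x_2,y_2))$; if this is $\infty$ there is nothing to prove, otherwise each $\hat d_i(x_i,y_i)\le\var$ is finite, and by Lemma \ref{L:help} together with the upward-closedness of the set of feasible levels (established in the proof of that lemma) there is $t_i\in D_i\var$ with $x_i=D_il_\var(t_i)$ and $y_i=D_ir_\var(t_i)$; then $(t_1,t_2)\in D_1\var\times D_2\var$ witnesses $\hat d\big((x_1,x_2),(y_1,y_2)\big)\le\var$. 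I expect this step to be the main obstacle: the infima $\hat d_i(x_i,y_i)$ are a priori attained at different levels, and what lets them combine at the common level $\var$ is precisely that Lemma \ref{L:help} (infimum attained) and the upward-closedness provide a witness at every level above the infimum.

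Finally I would identify the metric reflection of $(M_1\times M_2,\max(\hat d_1,\hat d_2))$. It identifies $(x_1,x_2)$ with $(y_1,y_2)$ exactly when $\max(\hat d_1(x_1,y_1),\hat d_2(x_2,y_2))=0$, i.e.\ when $x_1\sim_1y_1$ and $x_2\sim_2y_2$, so the underlying set of the quotient is $|C_1|\times|C_2|=|C_1\times C_2|$, and the descended metric is the maximum of the descended metrics of $C_1$ and $C_2$, which is exactly the metric of the product $C_1\times C_2$ in $\Met$. The quotient map sends $(x_1,x_2)$ to $([x_1]_{\sim_1},[x_2]_{\sim_2})=(f_1(x_1),f_2(x_2))$, i.e.\ it is $f_1\times f_2$. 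Hence $f_1\times f_2$ is the colimit map of $D_1\times D_2$, as claimed.
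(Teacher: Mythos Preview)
Your proof is correct and follows essentially the same route as the paper's: both compute the pseudometric $\hat d$ of Construction~\ref{C:eq} for $D_1\times D_2$ and establish $\hat d=\max(\hat d_1,\hat d_2)$ via Lemma~\ref{L:help} together with upward-closedness of the set of feasible levels. Your version is in fact more complete, since you explicitly verify that $D_1\times D_2$ is again a subcongruence (needed to invoke the construction) and spell out the identification of the resulting metric reflection with $C_1\times C_2$, both of which the paper leaves implicit.
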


\begin{proof}
Put $D=D_1\times D_2$ and 
$
D_ia = (M_i, d_i)$ for  $i=1,2.$
Then
$$ Da = (M_1\times M_2, d)
$$
where
$$
d(x,y) = \max \{d_1(x_1, y_1), d_2(x_2, y_2)\}\,.
$$
Using Construction \ref{C:eq}, it is sufficient to prove
$$
\hat d (x,y)= \max \{\hat d_1(x_1, y_1), \hat d_2(x_2, y_2)\} 
$$ 
to conclude that $\colim_B D$ is the product of $\colim_{B} D_i$ ($i=1,2$). 

We prove $\var =max (\var_1 , \var_2)$ where $\var = \hat{d} (x,y)$ and analogously $\var_i$ for $i=1, 2.$

(1) $\var \geq max(\var_1 ,\var_2)$.
This is clear if $\var$ is infinite. Otherwise Lemma \ref{L:help} yields $t=(t_1, t_2)$ in $M_1 \times M_2$
with $x= Dl_\var (t)$ and $Dr_\var(t)$. Thus for $i= 1,2$ we have $Dl_{\var_i} (t_i) = x_i$ and $Dr_{\var_i} (t_i) = y_i$.
This proves $\var_i \leq \var$.

(2)  $\var \leq max(\var_1 ,\var_2)$.   Without loss of generality we assume $\var_1 \leq \var_2 < \infty$. For $i=1,2$
we have $t_i \in D\var_i$ with 
$$x_i = Dl_{\var_i} (t_i) \quad \mbox{and} \ y_i = Dr_{\var_i} (t_i).$$
Let $t_1 '$ be the image of $t_1$ under $D(\var_1 \to \var_2)$. By compatibility  we get
$$x_1 = D_1 l_{\var_1} (t_1)= Dl_{\var_2} \cdot D(\var_1 \to \var_2) (t_1))= Dl_{\var_2} (t_1 ') $$
Thus the element $t=(t_1 ', t_2)$ of $D{\var_2}$ 
fulfils
$$(x_1, x_2) = (D_1 l_{\var_2} (t_1 '), D_2 l_{\var_2} (t_2)) = Dl_{\var_2} (t).$$
Analogously $(y_1, y_1)= Dlr_{\var_2} (t)$. This proves that
$$\var = \hat{d} ((x_1,x_2),(y_1, y_2) \leq \var_2 = max(\var_1, \var_2).$$

\end{proof}

\begin{remark}\label{R:free} 
For every space $M$ with metric $d_M$ the free $\Sigma$-algebra
$$
F_\Sigma M
$$
on $M$ was described in \cite{ADV} as follows. Its element are the $\Sigma$-terms with variables in $M$.
Its operations are, as usual, given by the formation of composite terms $t=\sigma(t_i)_{i<n}$ (for all $\sigma \in \Sigma_n$ and all $n$-tuples $(t_i)$ of terms). Terms $t$ and $t'$ are \textit{similar} if we can get $t'$ from $t$  by changing some variables of $t$. 
(Thus all pairs in $M$ are similar, and all similar composite terms have the same main symbol in $\Sigma$.) 
The metric of $F_\Sigma M$ assigns to terms $t$ and $s$ the distance $d(t,s)$ defined by structural induction (in the complexity of the terms) as follows

(1) $d(t,s) = d_M(t,s)$ if $t, s \in M$.

(2) $d(t,s)=\infty$  if $t$ and $s$ are not similar.

(3) $d(t,s)=\max\limits_{i<n} d(t_i, s_i)$ for similar composite terms $t=\sigma(t_i)_{i<n}$ and $s= \sigma(s_i)_{i<n}$ (where $\sigma \in \Sigma_n$).

The universal map $\eta_M \colon M \to U_\Sigma F_\Sigma M$ (Example \ref{E:alg}) is the inclusion map.
\end{remark} 

\begin{lemma}\label{L:free}
Let $A$ be a quantitative algebra. Given morphisms $f_1$, $f_2\colon M\to A$ in $\Met$ the corresponding homomorphisms $\bar f_1$, $\bar f_2 \colon F_\Sigma M \to A$ fulfil
$$
d(f_1, f_2) = d(\bar f_1, \bar  f_2)\,.
$$
\end{lemma}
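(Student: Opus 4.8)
The plan is to establish the two inequalities separately: $d(\bar f_1, \bar f_2) \ge d(f_1, f_2)$ is immediate, while $d(\bar f_1, \bar f_2) \le d(f_1, f_2)$ follows by structural induction on terms, using the explicit description of $F_\Sigma M$ recalled in Remark~\ref{R:free} and the distance formula of Example~\ref{E:alg}(1), namely $d(\bar f_1, \bar f_2) = \sup_{t}\, d(\bar f_1(t), \bar f_2(t))$ where $t$ ranges over all $\Sigma$-terms with variables in $M$.

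For the first inequality I would use that the universal map $\eta_M \colon M \to U_\Sigma F_\Sigma M$ is the inclusion and that $\bar f_i$ extends $f_i$, so the supremum over all terms $t$ dominates the sub-supremum over $t \in M$, which equals $\sup_{m \in M} d(f_1(m), f_2(m)) = d(f_1, f_2)$.

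For the second inequality I would prove $d(\bar f_1(t), \bar f_2(t)) \le d(f_1, f_2)$ for every term $t$ by induction on the complexity of $t$. In the base case $t \in M$ one has $\bar f_i(t) = f_i(t)$, so the distance is at most $d(f_1, f_2)$ by definition. In the inductive step $t = \sigma(t_i)_{i<n}$ with $\sigma \in \Sigma_n$, one has $\bar f_j(t) = \sigma_A\big((\bar f_j(t_i))_{i<n}\big)$ since $\bar f_j$ is a homomorphism; because $\sigma_A \colon A^n \to A$ is nonexpanding for the maximum metric on $A^n$, the value $d(\bar f_1(t), \bar f_2(t))$ is bounded by $\max_{i<n} d(\bar f_1(t_i), \bar f_2(t_i))$, and each $d(\bar f_1(t_i), \bar f_2(t_i))$ is $\le d(f_1, f_2)$ by the induction hypothesis (for $n = 0$ the maximum over the empty index set is $0$). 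Taking the supremum over all $t$ yields $d(\bar f_1, \bar f_2) \le d(f_1, f_2)$, and combining this with the first inequality gives the desired equality (the value $\infty$ being permitted throughout).

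The step carrying the whole argument is the inductive one, and it works precisely because quantitative algebras require the operations to be nonexpanding with respect to the categorical product $A^n$, i.e.\ with the maximum metric: were one to use instead the tensor product of $\Met$ (the addition metric), the bound $\max_{i<n}$ would have to be replaced by $\sum_{i<n}$ and the inequality would fail. Apart from this point there is no real obstacle; one only has to note that the metric on $F_\Sigma M$ assigns distance $\infty$ to non-similar terms, which is harmless here since each term is similar to itself.
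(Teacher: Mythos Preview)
Your proof is correct and follows essentially the same route as the paper's: the inequality $d(f_1,f_2)\le d(\bar f_1,\bar f_2)$ via $\bar f_i\cdot\eta_M=f_i$, and the reverse inequality by structural induction on terms using that each $\sigma_A$ is nonexpanding with respect to the maximum metric. Your additional remarks on the $n=0$ case and on why the maximum (rather than addition) metric is essential are not in the paper's proof but are sound and clarifying.
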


\begin{proof}
We have $d(f_1, f_2) = d( \bar f_1\cdot \eta_M, \bar f_2\cdot \eta_M)\leq d(\bar f_1, \bar f_2)$. The opposite inequality follows from $d(f_1, f_2) \geq d(\bar f_1(t), \bar f_2(t))$ for all terms $t\in F_\Sigma M$,  which we verify by structural induction. Indeed, this is clear if $t\in M$. Given $t= \sigma (t_i)$ such that each $t_i$ satisfies  the above  inequality, then $\bar f_k(t) = \sigma_A \big(\bar f_k(t_i)\big)$ for $k=1,2$ implies that $t$ also satisfies that inequality, since $\sigma_A$ is nonexpanding.
\end{proof}

\begin{corollary}\label{C:free}
The functor $F_{\Sigma} \colon \Met \to \Sigma$-$\Met$ left adjoint to $U_\Sigma$ is locally nonexpanding.
\end{corollary}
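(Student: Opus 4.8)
The plan is to read off this corollary directly from Lemma \ref{L:free}. Recall that $F_\Sigma$ being locally nonexpanding means precisely that for every parallel pair $f_1, f_2 \colon M \to N$ in $\Met$ we have $d(F_\Sigma f_1, F_\Sigma f_2) \leq d(f_1, f_2)$. So I fix such a pair and aim to produce this inequality.

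First I would pass to the unit of the adjunction. Consider the morphisms $g_i = \eta_N \cdot f_i \colon M \to U_\Sigma F_\Sigma N$ in $\Met$, where $\eta_N \colon N \to U_\Sigma F_\Sigma N$ is the inclusion map of Remark \ref{R:free}. By the universal property defining the left adjoint $F_\Sigma$, the homomorphism $\bar g_i \colon F_\Sigma M \to F_\Sigma N$ extending $g_i$ along $\eta_M$ is exactly $F_\Sigma f_i$. This identification is the only point where one has to be slightly careful, and it is just the naturality square of $\eta$ together with the uniqueness clause in the universal property; I regard it as the (mild) crux of the argument, since everything else is formal.

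Next I would apply Lemma \ref{L:free} with $A = F_\Sigma N$ and the pair $g_1, g_2 \colon M \to U_\Sigma F_\Sigma N$. This yields
$$
d(F_\Sigma f_1, F_\Sigma f_2) = d(\bar g_1, \bar g_2) = d(g_1, g_2)\,.
$$
Finally, since $\eta_N$ is an isometric embedding (being the inclusion map, it preserves distances) and composition in $\Met$ is nonexpanding, we have $d(g_1, g_2) = d(\eta_N \cdot f_1, \eta_N \cdot f_2) \leq d(f_1, f_2)$. Chaining these gives $d(F_\Sigma f_1, F_\Sigma f_2) \leq d(f_1, f_2)$, as required. In fact the inequality $d(\eta_N \cdot f_1, \eta_N \cdot f_2) = d(f_1, f_2)$ holds by isometry, so one even obtains $d(F_\Sigma f_1, F_\Sigma f_2) = d(f_1, f_2)$, i.e.\ $F_\Sigma$ is locally isometric; the corollary records only the weaker statement that suffices for what follows.
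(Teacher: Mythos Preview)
Your argument is correct and is precisely the derivation implicit in the paper, which states this as an immediate corollary of Lemma~\ref{L:free} without further proof. You have simply spelled out the one-line reduction: apply the lemma with $A = F_\Sigma N$ and $g_i = \eta_N \cdot f_i$, noting that $\bar g_i = F_\Sigma f_i$ and that $\eta_N$ is an isometry.
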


\begin{remark}\label{R:pr}
The functor $U_\Sigma\colon \Sigma$-$\Met\to \Met$ creates products (in fact, all weighted limits). Given algebras $A_i$, $i\in I$, then the space $A = \Pi_{i\in I} (U_\Sigma  A_i)$  has, for each $\sigma \in \Sigma_n$, a unique operation $\sigma_A \colon A^n \to A$ preserved by all  projections ($\pi_i \cdot \sigma_A =\sigma_{A_i}\cdot \pi_i^n$ for all $i\in I$): given  an $n$-tuple  $f\colon n\to A$ put 
$$
\sigma_A(f) = \big(\sigma_{A_i}(\pi_i \cdot f)\big)_{i\in I}\,.
$$
Moreover, the resulting algebra is obviously  a product in $\Sigma$-$\Alg$.

The argument for weighted limits is similar.
\end{remark}

\begin{lemma}\label{L:co} 
$\Sigma$-$\Met$ has coequalizers.
\end{lemma}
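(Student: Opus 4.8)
The plan is to construct the coequalizer of a parallel pair $f, g \colon A \to B$ in $\Sigma$-$\Met$ by the standard "quotient by the generated congruence" recipe, but carried out at the level of the underlying metric spaces so that one stays within $\Met$ (where all weighted colimits exist, Remark \ref{R:3.4}). First I would pass to the underlying metric spaces $|A|, |B|$ and consider on $|B|$ the pseudometric
$$
\hat d(x,y) = \inf \Big\{ \textstyle\sum_{k} d\big(b_k, b_k'\big) \Big\},
$$
where the infimum ranges over all finite "zig-zag" chains connecting $x$ to $y$ built from the relation $\{(f(a), g(a)) : a \in A\}$ closed under the operations of $\Sigma$, under $B$-distance, and under symmetry and transitivity; concretely one first generates the least $\Sigma$-congruence-compatible relation containing the pairs $(f(a),g(a))$, then takes the usual path-metric. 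I would then let $C$ be the metric reflection (Remark \ref{R:pseudo}) of $(|B|, \hat d)$, equipped with the operations inherited from $B$ — these are well defined on $C$ precisely because the generated relation was chosen to be $\Sigma$-compatible, and they are nonexpanding with respect to the maximum metric since each operation $\sigma_B$ is and $\hat d \le d_B$. The quotient map $c \colon B \to C$ is a nonexpanding homomorphism with $c \cdot f = c \cdot g$.

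Next I would verify the universal property. Given any nonexpanding homomorphism $h \colon B \to D$ with $h \cdot f = h \cdot g$, the underlying map factors through the underlying set of $C$ since $h$ identifies every pair in the generated relation (it respects the operations, $h \cdot f = h \cdot g$, and it is a set-map so it respects zig-zags); call the factorization $\bar h \colon |C| \to |D|$. That $\bar h$ is a homomorphism follows from Remark \ref{R:epi} (applied to the surjective homomorphism $c$ and to $h = \bar h \cdot c$). Finally $\bar h$ is nonexpanding: for $x, y \in C$ and any zig-zag chain in $B$ of total $B$-length close to $\hat d(x,y)$, applying $h$ gives a chain of the same length in $D$ connecting $\bar h(x)$ to $\bar h(y)$, so $d_D(\bar h(x), \bar h(y)) \le \hat d(x,y) + \varepsilon$ for all $\varepsilon > 0$. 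Uniqueness of $\bar h$ is immediate since $c$ is (in particular) epi. Hence $c$ is the coequalizer of $f$ and $g$ in $\Sigma$-$\Met$.

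The main obstacle is bookkeeping in the construction of $\hat d$: one must close the basic relation $\{(f(a),g(a))\}$ simultaneously under the $\Sigma$-operations and under metric zig-zags in the right order so that (i) the resulting pseudometric genuinely refines $d_B$, (ii) the operations descend to the reflection, and (iii) no further quotient is forced by nonexpansiveness of the operations with respect to the \emph{maximum} metric on $C^n$. Point (iii) is where the maximum metric (rather than the addition metric) matters and must be handled with a little care — but it works out because $\sigma_B$ is already nonexpanding for the maximum metric on $|B|^n$ and passing to a coarser pseudometric on each coordinate only decreases the maximum-metric distance. Alternatively, and perhaps more cleanly, one can sidestep the explicit pseudometric entirely: build the coequalizer of the underlying $\Sigma$-algebras in $\Sigma$-$\Alg$, transport the $\Met$-coequalizer structure along $U_\Sigma$ using that $U_\Sigma$ creates limits (Remark \ref{R:pr}) and that $F_\Sigma$ is locally nonexpanding (Corollary \ref{C:free}), and invoke the general fact that a locally presentable enriched category with a monadic forgetful functor to $\Met$ inherits all colimits; I would present the hands-on pseudometric construction as the primary argument since it is self-contained.
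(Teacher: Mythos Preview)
Your argument has a genuine gap at point (iii). The claim that ``passing to a coarser pseudometric on each coordinate only decreases the maximum-metric distance'' is true but irrelevant: it controls the \emph{domain} metric of $\sigma_C$, whereas nonexpansiveness also depends on the \emph{codomain} metric, which you have simultaneously coarsened. From $\hat d \le d_B$ and $d_B(\sigma_B(\vec x),\sigma_B(\vec y)) \le \max_i d_B(x_i,y_i)$ you only get $\hat d(\sigma_B(\vec x),\sigma_B(\vec y)) \le \max_i d_B(x_i,y_i)$, not the required bound $\max_i \hat d(x_i,y_i)$. In fact the one-step construction can fail: take $\Sigma$ with one binary operation $*$, let $B = F_\Sigma M$ for $M=\{x_1,u,v,y_1,x_2,p,q,y_2\}$ with $d(x_1,u)=d(v,y_1)=0.5$, $d(x_2,p)=0.3$, $d(q,y_2)=0.7$ and all other generator distances $\infty$, and let the congruence be generated by $u\sim v$, $p\sim q$. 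Then $\hat d(x_1,y_1)=\hat d(x_2,y_2)=1$, but every zig-zag from $x_1*x_2$ to $y_1*y_2$ has length at least $\max(0.5,0.3)+\max(0.5,0.7)=1.2$, so $*$ is not nonexpanding on your quotient. One must iterate the construction (or define $\hat d$ as the largest pseudometric below $d_B$ making the operations nonexpanding and collapsing $(f(a),g(a))$), which you have not done.

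For comparison, the paper sidesteps all of this with a short non-constructive argument: form the (small) diagram of all surjective homomorphisms $e\colon Y\to Z_e$ coequalizing $f_1,f_2$, take its limit in $\Sigma\text{-}\Met$, and observe that the induced map $q\colon Y\to Z$ is the coequalizer (using the surjective/isometric-embedding factorization to reduce an arbitrary coequalizing morphism to a surjective one). This approach avoids having to show that any particular pseudometric is operation-compatible. Your closing alternative (monadicity over $\Met$ plus local presentability) is also a valid route, but as stated it imports machinery not developed in the paper.
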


\begin{proof}
Let $f_1, f_2\colon X\to Y$ be homomorphisms in $\Sigma$-$\Met$.  All surjective homomorphisms $e\colon Y\to Z_e$ in $\Sigma$-$\Met$ with $e\cdot f_1 = e\cdot f_2$ form a diagram $D$ (as full subcategory of $Y/ \Sigma$-$\Met$) with a limit given  by an object $Z$ and a cone $z_e\colon Z\to Z_e$. All $e$'s form a cone of $D$, thus we get a unique factorization 
$$
q \colon Y\to Z\quad \mbox{with}\quad e= z_e\cdot q\,.
$$
It is easy to see that $q \cdot f_1 = q\cdot f_2$, and $q$ is surjective. This is the coequalizer of $f_1, f_2$. Indeed, every homomorphism $h\colon Y\to Z$ with $h\cdot f_1= h\cdot f_2$  factorizes as $h=m\cdot e$ where $e$ is a surjective homomorphism and $m$ is an isometric embedding. Consequently, $e\cdot f_1 = e\cdot f_2$, hence $e$ (and thus $h$, too) factorizes through $q$.
\end{proof} 

\begin{defi}\label{D:eff} 
A metric-enriched category has \emph{effective subcongruences} if every 
subcongruence is a kernel diagram of some morphism.
\end{defi}

\begin{prop}\label{P:eff1}
The category $\Met$ has  effective subcongruences.
\end{prop}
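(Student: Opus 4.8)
The plan is to show that every subcongruence $D \colon \cb \to \Met$ is the kernel diagram of its own colimit map. First I would invoke Construction \ref{C:eq} and the preceding Proposition to obtain the colimit map $f \colon (M,d) \to C$, where $Da = (M,d)$ and $C = (M,\hat d)/{\sim}$ is the metric reflection of the pseudometric $\hat d$ defined there. By Proposition \ref{P:sur} this $f$ is a surjective morphism of $\Met$, hence a subregular epimorphism. The point already established inside the proof of that Proposition is that the kernel diagram $D_f$ of $f$ is naturally isomorphic to $D$: indeed, $D_f\var$ consists of the pairs $(x,y)$ with $\hat d(x,y) \le \var$, and by Lemma \ref{L:help} (the ``infimum is a minimum'' fact) these are exactly the pairs of the form $(Dl_\var(t), Dr_\var(t))$ for a necessarily unique $t \in D\var$ — uniqueness because $Dl_\var, Dr_\var$ is a relation, hence collectively monic. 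This bijection is natural in $\var$ and compatible with $l_\var, r_\var$, giving the natural isomorphism $D \cong D_f$.

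Once that natural isomorphism is in hand, the Proposition follows immediately: a subcongruence that is naturally isomorphic to a kernel diagram \emph{is} a kernel diagram (the defining universal property of the kernel diagram is invariant under natural isomorphism, as already observed in the remark preceding Lemma \ref{L:pro}). So the structure of the argument is: (1) form $f$ via Construction \ref{C:eq}; (2) $f$ is a subregular epimorphism by Proposition \ref{P:sur}; (3) establish $D \cong D_f$ using Lemma \ref{L:help} and the fact that limits in $\Met$ are $\Set$-based so that $D_f\var$ is computed on underlying sets as claimed; (4) conclude that $D$ is a kernel diagram, namely of $f$.

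The main obstacle — and the reason the preliminary lemmas were set up — is step (3), specifically verifying that the set-theoretic description of $D_f\var$ really matches the image pairs of $D\var$. The subtle direction is that \emph{every} pair $(x,y)$ with $\hat d(x,y) \le \var$ arises as $(Dl_\var(t), Dr_\var(t))$ for some $t \in D\var$ at that \emph{exact} level $\var$, not merely at some level $\var' \le \var$ or as a limit of levels $\var + 1/n$; this is precisely the content of Lemma \ref{L:help}, which in turn leans on the Continuity axiom (d) of Definition \ref{D:proeq} to pass from the compatible family $t_n \in D(\var + 1/n)$ to a genuine $t \in D\var$. Everything else — that the candidate natural transformations are mutually inverse, that they commute with the connecting maps $D(\var \to \var')$ and with $l_\var, r_\var$ — is a routine diagram chase using collective monicity of the relations $Dl_\var, Dr_\var$. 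I would therefore write the proof tersely, pointing to Construction \ref{C:eq}, Proposition \ref{P:sur} and Lemma \ref{L:help}, and spend the few remaining lines only on the naturality of the isomorphism $D \cong D_f$.
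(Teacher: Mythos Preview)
Your proposal is correct and follows essentially the same route as the paper: form the colimit map $f$ via Construction~\ref{C:eq}, identify $D_f\var$ with the pairs $(x,y)$ satisfying $\hat d(x,y)\le\var$ (using that the metric reflection preserves distances), invoke Lemma~\ref{L:help} to match these with the image pairs $(Dl_\var(t),Dr_\var(t))$, and check naturality. One small point worth making explicit: to conclude that the bijection $D\var\to D_f\var$ is an isomorphism \emph{in $\Met$} you need not just collective monicity but the full isometric-pair property of $(Dl_\var,Dr_\var)$ from Definition~\ref{D:iso}(3), so that both $D\var$ and $D_f\var$ sit as subspaces of $(Da)^2$ on the same underlying set.
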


\begin{proof}
Let $D\colon \cb \to \Met$ be a subcongruence with a colimit map $f\colon (M,d)\to (\bar M, \bar d)$ as in Construction \ref{C:eq}. We prove that $D$ is (naturally isomorphic to) the kernel diagram $\tilde D$ of $f$. For each $\var$ we have
$$
\hat d(x_1, x_2) =\var \quad \mbox{iff} \quad d\big(f(x_1), f(x_2)\big) = \var
$$
since $f$ preserves distances (Remark \ref{R:pseudo}). Thus $\tilde D\var$ is the subspace of $(M, d)^2$ on all pairs $(x_1, x_2)$ with $\hat d(x_1, x_2) \leq \var$. That is, all pairs $
\big(Dl_\var(t), dr_\var(t)\big)$ for $t\in D\var$: see Lemma \ref{L:help}.

Since $D$ is a  subcongruence, each pair  $Dl_\var$, $Dr_\var$ defines a subspace of $(Da)^2$ on the same set of pairs as above. The corresponding isomorphisms $u_\var \colon D\var \to \tilde D\var$ with  $Dl_\var = \tilde Dl_\var\cdot u_\var$ and $Dr_\var =\tilde Dr_\var \cdot u_\var$ are natural. Indeed, given $\var \leq \var'$  the naturality square below commutes
$$
\xymatrix@=3pc{ 
	D\var \ar[r]^{u_\var} \ar[d]_{D(\var\to \var')}
	& \tilde D\var \ar[d]^{\tilde D(\var \to \var')}&\\
	D\var' \ar[r]_{u_{\var'}} & \tilde D\var' \ar@<0.5ex>[r]^{\tilde Dr_\var'}
	\ar@<-0.5ex>[r]_{\tilde D l_{\var'}}& (M,d)
}
$$
due to the universality of the $\var'$-kernel pair $\tilde Dl_\var'$, $\tilde Dr_\var'$ and the fact that the above square commutes when post-composed by $\tilde Dr_\var'$ or $\tilde Dl_\var'$.
The naturality squares for $l_\var$ and $r_\var$ are obvious.
\end{proof}

\begin{prop}\label{C:cr} 
The forgetful functor $U_\Sigma \colon \Sigma$-$\Met \to \Met$  preserves subcongruences, and creates their colimits.
\end{prop}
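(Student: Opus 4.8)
The plan is to treat the two assertions separately, and for the first to check each clause of Definition~\ref{D:proeq} for $U_\Sigma D$, given a subcongruence $D\colon\cb\to\Sigma$-$\Met$ on $X$. Symmetry passes trivially, since $U_\Sigma$ preserves composites. The pairs $U_\Sigma Dl_\var,U_\Sigma Dr_\var$ are relations because $U_\Sigma$ preserves products (Remark~\ref{R:pr}) and leaves the metric of underlying spaces unchanged: being a relation amounts to the map into the maximum-metric product $X\times X$ preserving distances of points, and this is unaffected by $U_\Sigma$. Continuity survives because $U_\Sigma$ creates, hence preserves, all weighted limits (Remark~\ref{R:pr}), in particular the conical $\omega^{\op}$-limits and limit cones of Definition~\ref{D:proeq}(d).

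The two substantive clauses are $\var$-reflexivity and transitivity, and here I would invoke the adjunction $F_\Sigma\dashv U_\Sigma$ with $F_\Sigma$ locally nonexpanding (Corollary~\ref{C:free}, Lemma~\ref{L:free}). For reflexivity: a test pair $q_1,q_2\colon Q\to U_\Sigma X$ in $\Met$ with $d(q_1,q_2)\le\var$ transposes to $\bar q_1,\bar q_2\colon F_\Sigma Q\to X$ with $d(\bar q_1,\bar q_2)\le\var$ by Lemma~\ref{L:free}, so $\var$-reflexivity of $D$ yields a homomorphism through which they factorize, and precomposing its $U_\Sigma$-image with the unit $\eta_Q$ gives the desired factorization of $(q_1,q_2)$ through $(U_\Sigma Dl_\var,U_\Sigma Dr_\var)$. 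Transitivity follows the same pattern: lift $u_0,u_1,u_2\colon S\to U_\Sigma X$ to transposes over $F_\Sigma S$, observe that a $\Met$-factorization of $(u_i,u_{i+1})$ transposes to a $\Sigma$-$\Met$-factorization of the transposed pair, apply transitivity of $D$, and transpose the resulting factorization over $D(\var+\var')$ back along $\eta_S$.

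For the second assertion, $U_\Sigma D$ is a subcongruence in $\Met$ by the above, so Construction~\ref{C:eq} produces its colimit map $f\colon|X|\to\bar X$, a surjection of sets. Since $f$ is surjective there is at most one $\Sigma$-algebra structure on $\bar X$ making $f$ a homomorphism, namely $\sigma_{\bar X}\bigl(f(x_1),\dots,f(x_n)\bigr)=f\bigl(\sigma_X(x_1,\dots,x_n)\bigr)$; the point is that this is well defined and nonexpanding. Suppose $\hat d(x_i,y_i)=\var_i$ is finite and $\le\var$ for each $i$. By Lemma~\ref{L:help} applied to $U_\Sigma D$ there are $t_i\in D\var_i$ with $Dl_{\var_i}(t_i)=x_i$ and $Dr_{\var_i}(t_i)=y_i$; transporting each $t_i$ to $D\var$ along the connecting morphism and applying $\sigma_{D\var}$ inside the \emph{algebra} $D\var$ produces $s$ with $Dl_\var(s)=\sigma_X(x_1,\dots,x_n)$ and $Dr_\var(s)=\sigma_X(y_1,\dots,y_n)$, because $Dl_\var$ and $Dr_\var$ are homomorphisms. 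Hence $\hat d\bigl(\sigma_X(x_1,\dots,x_n),\sigma_X(y_1,\dots,y_n)\bigr)\le\var$. Taking $\var=0$ gives well-definedness of $\sigma_{\bar X}$, and taking $\var=\max_i\hat d(x_i,y_i)$ gives that $\sigma_{\bar X}$ is nonexpanding for the maximum metric; the case of infinite distances is trivial. Since $f$ is also nonexpanding (as $d\ge\hat d$), it follows that $\bar X$ is a quantitative algebra and $f\colon X\to\bar X$ a surjective homomorphism.

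Finally I would verify, via Proposition~\ref{P:COM}, that this $f$ is the colimit map of $D$ in $\Sigma$-$\Met$. Because $U_\Sigma$ is faithful and isometric on hom-spaces, and $U_\Sigma f$ is a quotient satisfying \thetag{COM} in $\Met$, the same holds for $f$ in $\Sigma$-$\Met$. Any $f'\colon X\to C'$ in $\Sigma$-$\Met$ satisfying \thetag{COM} has $U_\Sigma f'$ satisfying it in $\Met$, so $U_\Sigma f'=g\cdot U_\Sigma f$ for a unique nonexpanding $g\colon\bar X\to U_\Sigma C'$; since $f$ is a surjective homomorphism and $g\cdot f=f'$ is a homomorphism, Remark~\ref{R:epi} makes $g$ a homomorphism, unique because $f$, being a quotient, is epi. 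Thus $f$ is the colimit map of $D$, and since the lifted algebra structure on $\bar X$ was the only possible one, $U_\Sigma$ creates this colimit. The main obstacle I anticipate is the third paragraph: extracting the witnessing element $s\in D\var$ from Lemma~\ref{L:help} together with the homomorphism property of the projections, so as to transport the operations onto $\bar X$; the rest is routine bookkeeping with the adjunction and the faithfulness of $U_\Sigma$.
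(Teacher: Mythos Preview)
Your proposal is correct; both halves go through. The differences from the paper's own proof are worth noting.

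For transitivity of $U_\Sigma D$ you lift a triple $u_0,u_1,u_2\colon S\to U_\Sigma X$ along the adjunction to $\bar u_0,\bar u_1,\bar u_2\colon F_\Sigma S\to X$, transfer the given factorizations, apply transitivity of $D$, and descend along $\eta_S$. The paper instead invokes the pullback reformulation of transitivity (Remark~\ref{R:pb}) together with the fact that $U_\Sigma$ preserves pullbacks. Your route avoids appealing to pullbacks in $\Sigma$-$\Met$ but reuses exactly the same adjunction trick as in reflexivity; the paper's route is a one-line reduction once Remark~\ref{R:pb} is in place.

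For the algebra structure on the colimit you work pointwise: given $\hat d(x_i,y_i)\le\var$ you use Lemma~\ref{L:help} to produce witnesses $t_i\in D\var_i$, push them to $D\var$, and apply $\sigma_{D\var}$ inside the algebra $D\var$ to obtain a witness for $\hat d\bigl(\sigma_X(x_i),\sigma_X(y_i)\bigr)\le\var$. The paper instead proves a separate commutation result (Proposition~\ref{P:eq}: basic colimits of subcongruences commute with finite products in $\Met$), so that $c^n$ is automatically the colimit map of $U_\Sigma D^n$; then $\sigma_C$ is obtained by checking \thetag{COM} for $c\cdot\sigma_{Da}$ and factorizing through $c^n$. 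Your argument is a direct, element-level instance of that proposition, so it is self-contained but essentially reproves the special case needed. The paper's version isolates the commutation fact for reuse, at the cost of an extra proposition.

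Your final paragraph, upgrading the $\Met$-level universal property to $\Sigma$-$\Met$ via Remark~\ref{R:epi} and the quotient property of $f$, is exactly the ``easy verification'' the paper leaves to the reader.
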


Explicitly: if $D$ is a subcongruence in $\Sigma$-$\Met$, then $U_\Sigma D$ 
is a subcongruence in $\Met$. Given its 
colimit map
$$
c\colon U_\Sigma Da \to C\,,
$$
then $C$ carries a unique structure of a quantitative algebra making $c$ a homomorphism; moreover that homomorphism is the colimit map of $D$.

\begin{proof}
We first show that $U_\Sigma D$ is a subcongruence in $\Sigma$-$\Met$ 

(a) Reflexivity: Let 
$q_1$, $q_2 \colon Q\to U_\Sigma Da$ be morphisms of distance $\var$. Since the corresponding homomorphisms $\bar q_1$, $\bar q_2 \colon F_\Sigma Q\to Da$ also have distance $\var$ (Lemma \ref{L:free}), there exists a homomorphism $q\colon F_\Sigma  Q\to D\var$ with $\bar q_1 = Dl_\var \cdot  q$ and $\bar q_2 = Dr_\var \cdot  q$. Then the pair $(q_1, q_2)$ factorizes through $(UDl_\var, UDr_\var)$ via the morphism $q\cdot \eta_Q \colon Q \to U_\Sigma D\var$, using that $q_i = U_\Sigma \bar q_i \cdot \eta_Q$.

(b) Symmetry is clear.

(c) Transitivity follows from Remark \ref{R:pb} and the fact that $U_\Sigma$ preserves pullbacks.

(d) Continuity is clear since $U_\Sigma$ preserves limits.

Next we verify that $C$ above carries a unique  algebra structure making  $c$ a homomorphism. Given $\sigma \in \Sigma_n$, then $c^n$ is the colimit map of $(U_\Sigma D)^n= U_\Sigma D^n$ (Proposition \ref{P:eq}). The  morphism $c\cdot \sigma_{Da} \colon U_\Sigma D^na \to C$ fulfils the compatibility condition (COM) for  $U_\Sigma D^n$. Indeed, 
$Dl_\var$ and $Dr_\var$ are homomorphisms, therefore we have
\begin{align*}
d(c\cdot \sigma_{Da} \cdot U_\Sigma D^n l_\var, c \cdot \sigma_{Da} \cdot U_\Sigma D^n r_\var) &\leq  d (\sigma_{Da} \cdot U_\Sigma D^n l_\var, \sigma_{Da} \cdot U_\Sigma D^n r_\var)\\[3pt]
&= d( U_\Sigma Dl_\var \cdot \sigma_{D\var},  U_\Sigma Dr_\var \cdot \sigma_{D\var})\\[3pt]
&\leq  d(U_\Sigma D l_\var, U_\Sigma Dr_\var)\\[3pt]
&\leq \var\,.
\end{align*}
Thus there is a unique morphism $\sigma_C \colon C^n \to C$ with
$$ 
c\cdot \sigma_{Da} = \sigma_C \cdot c^n\,.
$$
The verification that the resulting homomorphism $c$ is a colimit map of $D$  in $\Sigma$-$\Met$ is easy.
\end{proof}
From this proposition we get an immediate

\begin{corollary}\label{C:iff}
The category $\Sigma$-$\Met$ has effective subcongruences.
\end{corollary}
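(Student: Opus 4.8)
The plan is to reduce Corollary \ref{C:iff} to the two preceding results, namely Proposition \ref{P:eff1} (that $\Met$ has effective subcongruences) and Proposition \ref{C:cr} (that $U_\Sigma$ preserves subcongruences and creates their colimits). Given a subcongruence $D\colon \cb\to\Sigma\text{-}\Met$ with $Da=A$, we must exhibit a morphism of $\Sigma\text{-}\Met$ whose kernel diagram is (naturally isomorphic to) $D$.

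First I would apply Proposition \ref{C:cr}: $U_\Sigma D$ is a subcongruence in $\Met$, so by Proposition \ref{P:eff1} it is naturally isomorphic to the kernel diagram of its colimit map $c\colon U_\Sigma A\to C$ in $\Met$. Next, again by Proposition \ref{C:cr}, $C$ carries a unique quantitative algebra structure making $c$ a homomorphism, and this homomorphism $c\colon A\to \bar C$ is the colimit map of $D$ in $\Sigma\text{-}\Met$; in particular $c$ is a subregular epimorphism there (Definition \ref{D:subr}). By Lemma \ref{L:pro}, a subregular epimorphism is the colimit map of its own kernel diagram, so $D$ and the kernel diagram $D_c$ of $c$ in $\Sigma\text{-}\Met$ have the same colimit map.

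The remaining step is to promote this to a natural isomorphism $D\cong D_c$. Here I would use that $U_\Sigma$ creates the relevant limits: the $\var$-kernel pair of $c$ in $\Sigma\text{-}\Met$ is computed as in $\Met$ (Remark \ref{R:pr}), so $U_\Sigma D_c$ is the kernel diagram of $U_\Sigma c = c$ in $\Met$, which we already identified with $U_\Sigma D$. Thus the natural isomorphism $U_\Sigma D\cong U_\Sigma D_c$ of $\Met$-valued functors lifts: at each $\var$ the isomorphism $u_\var\colon D\var\to D_c\var$ in $\Met$ commutes with $Dl_\var,Dr_\var$ and $D_c l_\var, D_c r_\var$, and since both parallel pairs are jointly monic relations into $A$, the map $u_\var$ is automatically a homomorphism (for instance by the standard argument of Remark \ref{R:epi} applied fibrewise, or directly because the algebra structures on $D\var$ and $D_c\var$ are both the ones induced by the relation into $A^2$, as in Remark \ref{R:pr}). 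Naturality in $\var$ and compatibility with $l_\var,r_\var$ are inherited from the $\Met$-level isomorphism.

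The main obstacle is the very last point: checking that the componentwise isomorphisms in $\Met$ are in fact homomorphisms of quantitative algebras, i.e.\ that no algebra-structure information is lost in passing through $U_\Sigma$. This is handled by observing that, since $Dl_\var,Dr_\var$ and $D_cl_\var,D_cr_\var$ are relations (jointly monic), the algebra structure on each is uniquely determined by the requirement that these legs be homomorphisms into $A$ — exactly the situation of Remark \ref{R:pr} — so any $\Met$-isomorphism compatible with the legs transports the structure, hence is itself a homomorphism. Everything else is a routine diagram chase using results already established.
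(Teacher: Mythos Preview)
Your proposal is correct and is exactly the argument the paper intends: the corollary is presented as an immediate consequence of Proposition~\ref{C:cr} together with Proposition~\ref{P:eff1}, and your write-up simply spells out the details of that deduction. The only mild redundancy is the detour through Lemma~\ref{L:pro}; since $U_\Sigma$ creates weighted limits (Remark~\ref{R:pr}), once you know $U_\Sigma D$ is the kernel diagram of $c$ in $\Met$ you can conclude directly that $D$ is the kernel diagram of $c$ in $\Sigma\text{-}\Met$ by uniqueness of the lifted limit, without first introducing $D_c$ separately---but your version works just as well.
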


\section{Subvarietal Generators}\label{sec4}

 In Section \ref{sec2} we have recalled the concept of a varietal generator in an ordinary category: an abstractly finite, regularly projective regular generator. We now introduce the analogous concept in a metric-enriched category, and prove that a category with such a generator and reflexive
 coequalizers has weighted limits and colimits. This is used in the subsequent section for proving that, whenever
 subcongruences are, moreover, effective, the category is equivalent to a variety.

 Recall that $\ck$ denotes a category enriched over $\Met$, and that subregular epimorphisms in $\Met$ are precisely the surjective morphisms (Proposition \ref{P:sur}).
 
 \begin{defi}\label{D:pr} 
An object $K$ of  $\ck$ is a \emph{subregular projective} if its hom-functor $\ck(K, -) \colon \ck \to \Met$ preserves subregular epimorphisms. 
 Explicitly: for every subregular epimorphism $e\colon X \to Y$ all morphisms from $K$ to $Y$ factorize through $e$.
 
 \end{defi}
 It follows from Lemma \ref{L:pro} that in a category with $\var$-kernel pairs we have
 \begin{center}
 subeffective \ $\Rightarrow$ \ subregular projective.
 \end{center}

 \begin{example} 
 In $\Met$ subregular projectives are precisely the discrete spaces. Indeed:
 
 (1) If $K$ is a discrete space of cardinality $n$, then its hom-functor is naturally isomorphic to $(-)^n$. This functor preserves surjective morphisms, that is, subregular epimorphisms (Proposition \ref{P:sur}).
 
 (2) If $K$ is a subregular projective on a set $|K|$, use that the identity-carried morphism $e\colon |K| \to K$ is a subregular epimorphism
(Example \ref{E:bas}). Thus $\id_K$ factorizes through $e$ which means that $e$ is an isomorphism.

 \end{example}
 
 \begin{lemma} \label{L:fr}
 The free algebra $F_\Sigma M$ on a finite discrete space $M$ is a subregular projective in $\Sigma$-$\Met$.
  \end{lemma}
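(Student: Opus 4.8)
The plan is to show that the hom-functor $\Sigma\text{-}\Met(F_\Sigma M, -)$ preserves subregular epimorphisms. Since subregular epimorphisms in $\Sigma\text{-}\Met$ are precisely the surjective homomorphisms (Proposition \ref{P:sur}), and since $\Met$-subregular epimorphisms are the surjections (Proposition \ref{P:sur} again), it suffices to prove: for every surjective homomorphism $e\colon X \to Y$ in $\Sigma\text{-}\Met$, the induced nonexpanding map $\Sigma\text{-}\Met(F_\Sigma M, e)\colon \Sigma\text{-}\Met(F_\Sigma M, X) \to \Sigma\text{-}\Met(F_\Sigma M, Y)$ is surjective. Equivalently, by the adjunction $F_\Sigma \dashv U_\Sigma$, every homomorphism $g\colon F_\Sigma M \to Y$ factorizes through $e$.

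First I would translate the problem along the adjunction: a homomorphism $g\colon F_\Sigma M \to Y$ corresponds bijectively to a morphism $\bar g\colon M \to U_\Sigma Y$ in $\Met$. Since $M$ is discrete, $\bar g$ is just an arbitrary function $|M| \to |Y|$ (no metric constraint, as all distances in $M$ are $0$ or $\infty$). Because $M$ is finite and $e$ is surjective on underlying sets, I can lift $\bar g$ pointwise: choose for each of the finitely many elements $m \in |M|$ some preimage in $|X|$, obtaining a function $h\colon |M| \to |X|$ with $|e|\cdot h = \bar g$. Again because $M$ is discrete, $h$ is automatically nonexpanding, hence a morphism $h\colon M \to U_\Sigma X$ in $\Met$. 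Its adjoint transpose is a homomorphism $\hat h\colon F_\Sigma M \to X$, and by naturality of the adjunction (and because $U_\Sigma e \cdot h = \bar g$), we get $e \cdot \hat h = g$. This gives the required factorization, so $\Sigma\text{-}\Met(F_\Sigma M, e)$ is surjective, hence a subregular epimorphism in $\Met$.

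The only genuinely load-bearing points are: (i) subregular epimorphisms in $\Sigma\text{-}\Met$ coincide with surjective homomorphisms, which is Proposition \ref{P:sur}; (ii) the adjunction $F_\Sigma \dashv U_\Sigma$ exists and is enriched, which is Corollary \ref{C:free}; and (iii) finiteness of $M$ is used exactly once, to make the pointwise choice of preimages a finite (hence unproblematic) one — though in fact the argument goes through for any discrete $M$, so finiteness is not essential here. I expect no real obstacle: the main subtlety worth spelling out is that discreteness of $M$ makes both the function-lifting and the nonexpansiveness of the lift trivial, which is precisely why discrete spaces (and their free algebras) behave like projectives. If one wanted to avoid invoking the adjunction explicitly, one could instead describe $F_\Sigma M$ concretely via Remark \ref{R:free} (terms over $M$) and define the factorization by structural induction on terms, but the adjunction argument is cleaner.
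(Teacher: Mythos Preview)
Your argument is correct and is a genuinely different route from the paper's. The paper identifies the hom-functor of $F_\Sigma M$ (for $|M|=n$) with $U_\Sigma^n$, and then appeals to two structural results: $U_\Sigma$ creates colimits of subcongruences (Corollary~\ref{C:cr}), and in $\Met$ colimits of subcongruences commute with finite products (Proposition~\ref{P:eq}). This actually establishes the stronger property that $U_\Sigma^n$ preserves colimits of subcongruences (subeffectivity), from which subregular projectivity follows via Lemma~\ref{L:pro}. Your approach instead goes straight through the characterization of subregular epimorphisms as surjections (Proposition~\ref{P:sur}) and lifts along the adjunction $F_\Sigma \dashv U_\Sigma$; this is more elementary, avoids the subcongruence machinery entirely, and---as you rightly note---does not need $M$ to be finite, only discrete. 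The trade-off is that the paper's argument yields the stronger subeffectivity conclusion (implicitly relied upon later, e.g.\ in the Corollary after Theorem~\ref{T:main}), and finiteness is genuinely load-bearing there via Proposition~\ref{P:eq}; your proof matches the stated lemma exactly but does not give that extra strength.
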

 
\begin{proof}
 If $M$ has $n$ elements, then its hom-functor is naturally isomorphism to $U_{\Sigma}^n$. We know that $U_\Sigma$ creates colimits of subcongruences (Corollary \ref{C:cr}) and that in $\Met$ colimits of subcongruences commute with finite products (Proposition \ref{P:eq}). Thus $U_{\Sigma}^n$ preserves colimits of subcongruences.
\end{proof}


\begin{lemma}\label{L:ep}
Let $G$ be an object with tensors (Example \ref{E:ten}) in $\ck$.
For every surjective morphism $h\colon M\to M'$ of $\Met$ the morphism $h\otimes G$ is a subregular epimorphism of $\ck$.
\end{lemma}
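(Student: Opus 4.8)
The plan is to exploit that $-\otimes G$ is an enriched left adjoint and therefore preserves the basic colimit that exhibits a surjection of $\Met$ as a subregular epimorphism; this is exactly the argument already sketched in Remark \ref{R:tt}, now with the hypothesis that $G$ has tensors made explicit. In detail: since $h$ is surjective, Proposition \ref{P:sur} gives that it is a subregular epimorphism of $\Met$, i.e. the colimit map of some basic diagram $D\colon\cb\to\Met$ weighted by $B$ (Definition \ref{D:subr}); by Lemma \ref{L:pro} one may take $D$ to be the kernel diagram of $h$, though this is not needed. Because $G$ has tensors, $\ck(G,-)$ has an enriched left adjoint $-\otimes G\colon\Met\to\ck$ (Example \ref{E:ten}), and enriched left adjoints preserve weighted colimits (Borceux--Kelly \cite{BK}). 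Applying $-\otimes G$ to the basic colimit $\colim_B D=M'$ thus yields a basic colimit $\colim_B(D\otimes G)=M'\otimes G$ in $\ck$, where $D\otimes G$ abbreviates $(-\otimes G)\circ D$; and since the colimit map of a basic colimit is the value at $\ast$ of the $a$-component of its unit (Remark \ref{R:3.12}), and preservation sends the unit of $\colim_B D$ to its image under $-\otimes G$, the colimit map of $D\otimes G$ is $-\otimes G$ applied to that of $D$, namely $h\otimes G$. Hence $h\otimes G$ is a subregular epimorphism of $\ck$.

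If one prefers to avoid invoking preservation of weighted colimits as a black box, I would instead check directly, via Proposition \ref{P:COM}, that $h\otimes G$ is universal with respect to the compatibility condition (COM) for the basic diagram $E$ given by $Ea=M\otimes G$, $El_\var=Dl_\var\otimes G$ and $Er_\var=Dr_\var\otimes G$. That $h\otimes G$ satisfies (COM) for $E$ is immediate from functoriality of $-\otimes G$ together with the fact that it is locally nonexpanding: $(h\otimes G)\cdot(Dl_\var\otimes G)=(h\cdot Dl_\var)\otimes G$, and $d\big((h\cdot Dl_\var)\otimes G,(h\cdot Dr_\var)\otimes G\big)\le d(h\cdot Dl_\var,h\cdot Dr_\var)\le\var$. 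That $h\otimes G$ is a quotient in $\ck$ follows by transposing a pair $v_1,v_2\colon M'\otimes G\to U$ along the adjunction to $\tilde v_1,\tilde v_2\colon M'\to\ck(G,U)$: the adjunction bijections are isometries natural in the $\Met$-variable, so $d(v_1,v_2)=d(\tilde v_1,\tilde v_2)$ and $d\big(v_1\cdot(h\otimes G),v_2\cdot(h\otimes G)\big)=d(\tilde v_1\cdot h,\tilde v_2\cdot h)$, and these coincide because the surjection $h$ is a quotient in $\Met$ (Definition \ref{D:iso}).

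It then remains to check the universal factorization. Given $g\colon M\otimes G\to U$ in $\ck$ satisfying (COM) for $E$, transpose it to $\tilde g\colon M\to\ck(G,U)$ in $\Met$; by the same naturality and isometry of transposition, $\tilde g$ satisfies (COM) for $D$, so universality of $h$ (Proposition \ref{P:COM}) provides a unique $k\colon M'\to\ck(G,U)$ with $\tilde g=k\cdot h$, and transposing back yields a unique $\hat k\colon M'\otimes G\to U$ with $g=\hat k\cdot(h\otimes G)$. The only point requiring care throughout is the compatibility of transposition with the connecting morphisms of the kernel pairs, i.e. that precomposition in $\ck$ by a morphism of the form $\varphi\otimes G$ corresponds, under the adjunction, to precomposition in $\Met$ by $\varphi$ — which is precisely naturality of the bijection $\tfrac{M\otimes G\to U}{M\to\ck(G,U)}$ in the variable $M$; granted this, everything else is routine bookkeeping. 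I expect no genuine obstacle here: the whole content is that left adjoints preserve the colimits defining subregular epimorphisms.
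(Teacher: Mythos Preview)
Your first paragraph is exactly the paper's proof: $h$ is a subregular epimorphism by Proposition~\ref{P:sur}, and the enriched left adjoint $-\otimes G$ preserves the basic colimit, so $h\otimes G$ is the colimit map of $D\otimes G$. The additional direct verification via Proposition~\ref{P:COM} that you offer as an alternative is correct but not in the paper, which simply invokes preservation of weighted colimits by left adjoints.
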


\begin{proof}
 Since $-\otimes G$ is left adjoint to $\ck(G, -)$, it preserves weighted colimits. Let $h$ be the colimit map of $D\colon \cb \to \Met$ weighted by $B$ (Proposition \ref{P:sur}). Then the colimit map of the composite $D\otimes G$ of $D$ and $-\otimes G$ (weighted by $B$) is $h\otimes G$.
 \end{proof}

%
 

 Recall the concept of abstract finiteness from Definition \ref{D:abs}. In the category $\Met$ the only finitely generated object is the empty space (\cite{AR3}).
 In contrast:
 
 \begin{example}\label{E:abs}
 	(1) A metric space is abstractly finite in $\Met$ iff it has only finitely many connected components. Thus finite spaces are abstractly finite, and so is for example $\R$ with the usual metric.
 	
 	\vskip 1mm
 	(2) The free algebra $F_\Sigma I$ on one generator, $x$,  is abstractly finite in $\Sigma$-$\Met$. Indeed, a coproduct $\coprod\limits_{M} F_\Sigma I$ is the free algebra on the discrete space $M$. Every morphism $f\colon F_\Sigma I\to F_\Sigma M$ is determined by the term $t= f(x)$ in $F_\Sigma M$.
 	This term contains only a finite subset $M_0 \subseteq M$ of variables. This implies that $f$ factorizes through the canonical embedding $F_\Sigma M_0 \hookrightarrow F_\Sigma M$.
 \end{example}
 
Recall that in an ordinary category an object $G$ with copowers is a regular generator provided that for every object $K$
the canonical morphism from the copower of $G$ indexed by $\ck (G,K)$ to $K$ is a regular epimorphism. Here is the 
metric-enriched variant:

\begin{defi}\label{D:vg}
Let $G$ be an object with tensors. It is a \textit{subregular generator} if
for every object $K$  the evaluation map $\hat\id \colon \ck(G,K)\otimes G\to K$ (Example \ref{E:n}) is a subregular epimorphism. 

$G$ is a \emph{subvarietal generator} if it is, moreover, an abstractly finite subregular projective.

\end{defi}



\begin{example}\label{E:reg}
The free algebra on one generator, $G=F_\Sigma I$, is a subvarietal generator of $\Sigma$-$\Met$:
it is abstractly finite (Example \ref{E:abs}) and a subregular projective (Lemma \ref{L:fr}). It also is
a subregular generator. Indeed, for every space $M$ we have
$$
F_\Sigma M = M \otimes F_\Sigma I
$$
because each quantitative algebra $A$ yields natural isomorphisms
$$
\xymatrix@C=.5pc@R=.21pc{
&F_\Sigma M\ar[r] & A&\\
\ar@{-}[rrr]&&&\\
&M  \ar[r] &U_\Sigma A&\\
\ar@{-}[rrr]&&&\\
 & M \ar[r] &\Sigma\mbox{-} \Met (F_\Sigma I, A)&
 }
$$
Thus, the evaluation morphism 
$$
\hat \id \colon \Sigma\mbox{-}\Met(F_\Sigma I, A) \otimes F_\Sigma I\to A
$$
 is the unique extension of $\id_{U_\Sigma A}$ to a  homomorphism from $F_{\Sigma} U_{\Sigma} A$ to $A$. This homomorphism 
is surjective, thus, a subregular epimorphism (Corollary \ref{C:var-epi}).

\end{example}

 \begin{remark}\label{R:faith}
If $G$ is a subregular generator, for every parallel pair $p, p' \colon P\to R$ we have
 $$
 d(p, p')= \sup_{t\colon G\to P} d(p\cdot t, p'\cdot t)\,.
 $$
 Indeed, this means precisely that the morphism $[g] \colon \coprod\limits_{|\ck (G,P)|} G \to P$ is a quotient.
 \end{remark}


 \begin{defi}[{\cite{K}}] 
 Let $\ca$ be a full enriched subcategory of $\ck$ and  denote by $E\colon \ck \to [\ca^{\op}, \Met]$ the functor assigning to $K$  the restriction of $\ck(-, K)$ to $\ca^{\op}$. Then $\ca$ is \textit{dense} if $E$ is  faithful (for all parallel pairs $f$, $f'$ in $\ck$ we have $d(f, f') = d(Ef, Ef')$) and full.
 \end{defi}
 
 \begin{theorem}\label{T:cc}
 Let $\ck$ have reflexive  coequalizers and a subvarietal generator $G$. Then  all finite copowers of $G$ form a dense subcategory.
 \end{theorem}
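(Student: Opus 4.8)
The plan is to show that $E\colon\ck\to[\ca^{\op},\Met]$ is faithful and full, where $\ca$ is the full subcategory of finite copowers $\coprod_n G$ ($n\in\N$). Faithfulness is the easier half: since $G$ is a subregular generator, Remark \ref{R:faith} gives $d(p,p')=\sup_{t\colon G\to P}d(p\cdot t,p'\cdot t)$ for any parallel pair $p,p'\colon P\to R$, and morphisms $G\to P$ are exactly the components $Ep$, $Ep'$ at the object $G=\coprod_1 G$ of $\ca$. Hence $d(p,p')=d(Ep,Ep')$ already from the single object $G$, which certainly implies $E$ is faithful.

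For fullness, I would take a natural transformation $\tau\colon E X\to E Y$ in $[\ca^{\op},\Met]$ and produce a morphism $f\colon X\to Y$ in $\ck$ with $Ef=\tau$. The strategy is the standard ``presentation by generators and relations'' argument adapted to the enriched setting. First, because $G$ is a subregular generator, the evaluation map $\widehat{\id}\colon\ck(G,X)\otimes G\to X$ is a subregular epimorphism; using that $G$ is abstractly finite and that $\ck(G,X)\otimes G$ can be written as a filtered (conical) colimit of finite copowers $\coprod_{M'}G$ over the finite subsets $M'\subseteq|\ck(G,X)|$ (together with the genuine metric data of $\ck(G,X)$, which is itself a basic colimit of discrete spaces by Example \ref{E:bas}), I would exhibit $X$ as a colimit built from copowers of $G$. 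Concretely, take a subregular epimorphism $e_X\colon P_X\to X$ with $P_X$ of the form $\ck(G,X)\otimes G$, then take a subregular epimorphism onto the kernel data, i.e.\ present $X$ as a reflexive coequalizer (the compatibility condition (COM) of Proposition \ref{P:COM}) of a pair between copowers of $G$ --- this is where reflexive coequalizers and the basic-colimit description of subregular epimorphisms are used. The transformation $\tau$, being natural on all finite copowers of $G$, acts compatibly on this presentation: it sends the generating morphisms $G\to X$ to morphisms $G\to Y$ and respects the relations, so by the universal property of the coequalizer it induces a unique $f\colon X\to Y$. One then checks $Ef=\tau$ componentwise on each $\coprod_n G$, using naturality of $\tau$ and that $E$ preserves the relevant colimits on the copowers (each component of $E(\coprod_n G)$ at $\coprod_m G$ is $\ck(\coprod_m G,\coprod_n G)$, computed via tensors and copowers).

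The main obstacle I expect is the \emph{metric} bookkeeping in the presentation step: one must present $X$ not merely as an ordinary coequalizer of copowers of $G$ but in a way that records all the $\var$-kernel data, so that a natural transformation defined only on \emph{discrete} copowers $\coprod_n G$ automatically respects distances. This is exactly where the basic weight $B$ and the structure of $\ck(G,X)$ as a basic colimit of its discretizations must be invoked: the abstract finiteness of $G$ guarantees that any morphism $G\to\ck(G,X)\otimes G$ (or into a copower) factors through a finite discrete subcopower, so $\tau$'s values on finite discrete copowers determine $f$ on all of $X$, and the $\var$-kernel-pair relations (which, after tensoring with $G$, are again subregular epimorphisms by Lemma \ref{L:ep} / Remark \ref{R:tt}) force $f$ to be nonexpanding with the correct distances. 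Assembling these pieces into a clean coequalizer presentation, and verifying that $E$ carries it to the corresponding colimit presentation in $[\ca^{\op},\Met]$, is the technical heart of the argument; the rest is routine diagram-chasing.
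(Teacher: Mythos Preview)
Your faithfulness argument matches the paper's exactly: Remark~\ref{R:faith} does all the work.

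For fullness, however, your ``presentation'' strategy is not what the paper does, and as written it has a real gap. You propose to exhibit $X$ as a reflexive coequalizer of finite copowers of $G$ and then push $\tau$ through that presentation. But the kernel data of the subregular epimorphism $\widehat{\id}\colon\ck(G,X)\otimes G\to X$ consist of objects $D\var$ that are \emph{not} copowers of $G$, and you have not explained how to resolve them further into finite copowers while keeping the metric information intact. This is precisely the step you flag as ``the main obstacle,'' and you do not actually carry it out; the ingredients you list (abstract finiteness, projectivity, Example~\ref{E:bas}) are the right ones, but assembling them into a two-stage presentation of $X$ by finite discrete copowers is genuinely delicate and you have not done it.

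The paper's argument sidesteps this entirely. It never builds a presentation of $X$. Instead it defines the candidate morphism directly: let $q\colon\ck(G,K)\otimes G\to L$ be the adjoint transpose of the $G$-component $\tau_G\colon\ck(G,K)\to\ck(G,L)$, and show that $q$ factors through $\widehat{\id}$. Since $\widehat{\id}$ is the colimit map of some basic diagram $D$, this amounts to checking the compatibility condition $d(q\cdot Dl_\var,\,q\cdot Dr_\var)\le\var$. By Remark~\ref{R:faith} it suffices to test against each $t\colon G\to D\var$. Now subregular projectivity of $G$ lets one factor $Dl_\var\cdot t$ and $Dr_\var\cdot t$ through the discretization $\lvert\ck(G,K)\rvert\otimes G\to\ck(G,K)\otimes G$ (a subregular epi by Remark~\ref{R:tt}), and abstract finiteness pushes this further into a single finite copower $\coprod_M G$. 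On that finite copower the component of $\tau$ is a nonexpanding map, so the inequality $d(\widehat{\id}\cdot Dl_\var\cdot t,\,\widehat{\id}\cdot Dr_\var\cdot t)\le\var$ transports to the required inequality for $q$. This pointwise verification via $G$ replaces your global presentation step and is where the argument actually lives.
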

 
 \begin{proof}
  
   Denote by $\ca$ the full subcategory of all finite copowers
  $$
  n\otimes G = \coprod_n G \qquad (n \in \N)\,.
  $$
  We prove that $E$ in the preceding definition is full and faithful.
  
  (1)
  $E$ is faithful due to Remark \ref{R:faith}.
  
  (2) To prove that $E$ is full, consider a morphism from $EK$ to $EL$ for some $K$, $L \in \ck$, i.e.\ a natural transformation from $\ck(-, K) / \ca^{\op}$ to $\ck(-, L)/ \ca^{\op}$. We denote it as follows:
  $$
  \xymatrix@R=.21pc{
  	&n\otimes G \ar[r]^{\quad f\quad } & K&\\
  	\ar@{-}[rrr]&&&\\
  	&n\otimes G \ar[r]_{f'} & L&
  }\qquad (n\in \N)
  $$
  for all $n\in \N$ (considered as the discrete spaces $\{0, \dots, n-1\}$). The naturality of $(-)'$ means that the following implication holds for every morphism $h\colon n\otimes G \to m\otimes G$ in $\ca ^{op}$:
  \begin{equation}\label{in0}
  \xymatrix@C=1pc{
  	n\otimes G \ar[rd]_{f} \ar[rr]^{h} \ar@{}[drr]|{\circlearrowright}&& m\otimes G\ar[ld]^{g}\\
  	& K &
  }
  \quad 
  \Rightarrow
  \quad 
  \xymatrix@C=1pc{
  	n\otimes G  \ar[dr]_{f'} \ar[rr]^{h} \ar@{}[drr]|{\circlearrowright}&& m\otimes G\ar[dl]^{ g'}\\
  	& L &
  }
  \end{equation}
  Applying this to $n=1$ and the coproduct injections $h_i \colon G \to m$ of $m\otimes G$, we conclude for each morphism $g=[g_i]$ that
  \begin{equation}\label{ine11}
  [g'_0 ,...,g'_{m-1}]=[g_0, ...,g_{m-1}]'
  \end{equation}
  
  (2a) We are to present a morphism
  \begin{equation}\label{equ}
  k\colon K \to L \quad \mbox{with}\quad f'= k\cdot f \qquad \mbox{(for all $f\colon n \otimes G \to L$)}.
  \end{equation}
  For that we denote the adjoint transpose of $(-)' \colon \ck (G, K,) \to \ck (G, L)$ by
  \begin{equation}
  q \colon \ck (G,K) \otimes G \to  L.
  \end{equation}
  That is, $q$ is given by the property that for every morphism $f \colon G \to K$ represented by
  $f_0 \colon 1 \to \ck (G,K)$ (thus $f=\hat{f_0})$ we have
  $$f'=q  \cdot (f_0 \otimes G).$$ 
  It follows from \eqref{ine11} that for every finite (discerete) subset $m \colon M \hookrightarrow \ck (G,K)$ we get
  for $\hat m \colon M \otimes G \to K$ that
  \begin{equation}\label{nnew}
  \hat{m'}=q \cdot (m\otimes G).
  \end{equation}
  We verify below that $q$ factorizes through $\hat{\id}$. This conludes the proof, since the factorizing morphism
  $k$:
  $$
  \xymatrix@=4pc{
  	\ck (G,K)\oti G \ar[r]^<<<<<<<<{q}\ar[d]_{\hat \id}& L\\
  	K \ar[ur]_k & G \ar[l]^{f} \ar[u]_{f'}
  }
  $$
  fulfils \eqref{equ}. Indeed, given $f\colon G\to K$, we have $f= \widehat{\id} \cdot(f_0 \otimes G)$, 
  thus 
  $$f'=q  \cdot (f_0 \otimes G)=k \cdot \widehat{\id}  \cdot (f_0 \otimes G) = k \cdot f.$$
  
  (2b) We now prove that $q$ factorizes through $\widehat \id$. Since $\widehat \id $ is a subregular epimorphism, we have a diagram $D \colon \cb \to \ck$ with
  $$
  K=\colim_{B} D\,, \qquad  \ck (G, K) \oti G = Da,
  $$
  and the colimit map  $\widehat \id$. It is sufficient to verify that for every $\var$ the morphism $q$ satisfies  the compability condition \thetag{COM}:
  \begin{equation}\label{ine1}
  d(q\cdot D{l_\var}, q\cdot Dr_{\var}) \leq \var\,.
  \end{equation}
By Remark \ref{R:faith} this is equivalent to proving that given $t\colon G\to D\var$ we have
  \begin{equation}\label{ine2}
  d(q\cdot Dl_\var\cdot t, q\cdot Dr_{\var}\cdot t) \leq \var\,.
  \end{equation}
  We fix $\var$ and $t$ and prove this inequality now.
  
  The identity-carried map
  $$ 
  b\colon |\ck (G,K)| \to \ck(G,K)
  $$
  is by Proposition \ref{P:sur}  a subregular epimorphism in $\Met$. Thus, so is $b \otimes G$ (Remark \ref{R:tt}).
  Since $G$ is subregularly projective, both morphisms
  $D l_\var \cdot t$, $Dr_\var \cdot t \colon G \to \ck (G,K) \oti G$  factorize through 
  
$$  b\otimes G \colon \coprod_{|\ck(G,K)|} G \to \ck(G,K) \oti G\,.
  $$
  Moreover, since $G$ is abstractly finite, there is a finite subset $m\colon M\hookrightarrow |\ck (G,K)|$ such that both of the factorizing morphisms further factorize through the sub-coproduct $m \otimes G \colon \coprod\limits_M G \hookrightarrow |\ck (G,K)|$. We denote the resulting factorizing morphisms by $\overline{l_\var}$ and $\bar r_\var$, respectively: 
  $$
  \xymatrix@C=2pc@R=4.5pc{
  	G\ar[rr]^{t}
  	\ar @<0.6ex>[d]^{\overline{r}_\var}
  	\ar@<-0.6ex> [d]_{\overline{l}_\var}
  	&&
  	D\var
  	\ar @<0.6ex>[d]^{Dr_\var}
  	\ar@<-0.6ex> [d]_{Dl_\var} & \\
  	\coprod\limits_{M} G \ar[r]^<<<<<{m\otimes G}  \ar[drr]_{\widehat{b\cdot m}}& \!\!\!\coprod\limits_{|\ck(G,K)|}G\!\!\! \ar[r]^<<<<<{b\otimes G} &  \ck(G,K) \otimes G \ar [r]_>>>>>>{q} \ar[d]^{\widehat\id}& L\\
  	&&K&&}
  $$
  In the above diagram the lower tringle commutes because $\widehat{b\cdot m}=\hat{id} \cdot [(b \cdot m) \otimes G]$.
  Since $\hat\id$ satisfies the compatibility condition $d(\hat\id \cdot Dl_\var, \hat\id \cdot D r_\var)\leq \var$, we get
  $$
  d(\hat\id \cdot Dl_\var\cdot t, \hat\id \cdot Dr_\var \cdot t) \leq \var\,.
  $$
  We conclude  from the above diagram that
  $$
  d\big( \widehat{b\cdot m} \cdot \overline{l}_\var,  \widehat{b\cdot m}\cdot \overline{r}_\var \big) \leq \var\,.
  $$
  Applying \eqref{in0} we thus get
  $$
  d\big([ \widehat{b\cdot m}]' \cdot \overline{l}_\var,  [\widehat{b\cdot m}]'\cdot \overline{r}_\var \big) \leq \var\,.
  $$
  By the definition of $q$, we have $([ \widehat{b\cdot m}]'= q\cdot [(b\cdot m) \oti G] $. The desired inequality \eqref{ine2}
  follows from the above diagram, using the last inequality.

\end{proof}

\begin{corollary}\label{C:cc} 
Every category with reflexive coequalizers and a subvarietal generator $G$ has weighted limits and colimits.
\end{corollary}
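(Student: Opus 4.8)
The plan is to use Theorem \ref{T:cc} to realise $\ck$ as a full reflective subcategory of the presheaf category $[\ca^{\op},\Met]$, where $\ca$ is the (small) subcategory of finite copowers $n\otimes G$ and $E\colon\ck\to[\ca^{\op},\Met]$, $EX=\ck(J-,X)$, is the fully faithful functor of that theorem ($J\colon\ca\hookrightarrow\ck$ the inclusion). Since $\Met$ has all weighted limits and colimits (Remark \ref{R:3.4}) and $\ca$ is small, the same holds for $[\ca^{\op},\Met]$; and a full reflective subcategory of such a category again has all weighted limits (formed in the ambient category, which is closed under them, since its objects are exactly those orthogonal to the reflection maps) and all weighted colimits (obtained by applying the reflector $L$ to the ambient colimit, using that $L$ is a left adjoint with $L\,E\cong\Id_\ck$). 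So everything reduces to producing an enriched left adjoint $L$ of $E$.

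I would define $L$ on a presheaf $P$ by the weighted-colimit coend $LP=\int^{a\in\ca}Pa\otimes Ja$, and the real work is to check that this coend exists in $\ck$ under the standing hypotheses. First, every object $a=n\otimes G$ of $\ca$ has tensors: the functor $\ck(n\otimes G,-)\cong\ck(G,-)^{n}$ has left adjoint $M\mapsto\coprod_{n}(M\otimes G)$, which exists because $G$ has tensors and $-\otimes G$, being a left adjoint, preserves finite coproducts; note each such tensor $M\otimes a$ is again of the form $(\text{metric space})\otimes G$. Now write the coend as the usual coequalizer
$$
\coprod_{a,b\in\obj\ca}\ca(a,b)\otimes Pb\otimes Ja \;\rightrightarrows\; \coprod_{a\in\obj\ca}Pa\otimes Ja \;\longrightarrow\; LP,
$$
one map induced by the action of $P$ on morphisms, the other by functoriality of $J$. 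The object set of $\ca$ is small (indeed countable, taking one copower per $n\in\N$), and each summand is of the form $(\text{metric space})\otimes G$ by the remark above; hence each coproduct is of the form $\big(\coprod\text{metric spaces}\big)\otimes G$ and exists in $\ck$, because $\Met$ has small coproducts and $-\otimes G$ preserves them. Finally the displayed parallel pair is reflexive: a common section is obtained on the $(a,a)$-summands by inserting the identities $\id_a\in\ca(a,a)$. Thus its coequalizer exists by hypothesis, so $LP$ exists.

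With $L$ defined, the adjunction $L\dashv E$ is the routine co-Yoneda computation: for every $X\in\ck$,
$$
\ck(LP,X)=\int_{a}\ck(Pa\otimes Ja,X)=\int_{a}\Met\big(Pa,\ck(Ja,X)\big)=[\ca^{\op},\Met](P,EX),
$$
naturally in $X$, using continuity of the hom in its first variable, the tensor--hom adjunction defining $Pa\otimes Ja$, and the end formula for homs of presheaves. Since $E$ is fully faithful its counit is invertible, so $\ck$ is a full reflective subcategory of $[\ca^{\op},\Met]$ and therefore has all weighted limits and colimits. The single delicate point --- and the only place where both hypotheses are genuinely used --- is the existence of the coend $LP$: reflexivity of the coequalizer pair is exactly what lets one get away with \emph{reflexive} coequalizers rather than all coequalizers, and density of $\ca$ (Theorem \ref{T:cc}, where the full strength of the subvarietal generator enters) is what guarantees that the reflection lands in $\ck$.
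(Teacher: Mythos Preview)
Your argument is correct and follows essentially the same route as the paper: both use Theorem~\ref{T:cc} to obtain density of the finite copowers of $G$, and then deduce (co)completeness from reflectivity in the presheaf category $[\ca^{\op},\Met]$. The only difference is that the paper outsources this last step to \cite[Corollary~3.5]{A1}, whereas you spell it out directly by constructing the reflector $L$ as a coend and verifying that the required coequalizer is reflexive; your version is thus more self-contained but not genuinely different in strategy.
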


$\Met$ has weighted limits and colimits (\cite[Example 4.5]{AR3}). 
By Corollary 3.5 in \cite{A1} a metric-enriched category with reflexive coequalizers has weighted limits and colimits, whenever
it has an object with tensors such that finite copowers of it are dense.

\begin{prop}\label{P:fac}
Every category with reflexive coequalizers and a subvarietal generator has the following factorization system $(\ce, \cm)$:
\begin{align*}
\ce & = \mbox{subregular epimorphisms,}\\
\cm & = \mbox{isometric embeddings (Definition \ref{D:iso})}\,.
\end{align*}
\end{prop}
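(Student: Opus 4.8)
The plan is to establish the two defining properties of a factorization system for the pair $(\ce,\cm)$ and then invoke the standard characterization. As preliminary observations: both classes contain all isomorphisms and are stable under composing with isomorphisms (a subregular epimorphism is a colimit map, and colimit maps are determined only up to isomorphism; the isometry condition is clearly preserved); subregular epimorphisms are epimorphisms (Remark \ref{R:pro}); and isometric embeddings are monomorphisms (immediate from Definition \ref{D:iso}). It therefore suffices to prove \emph{(i)} that every morphism factors as an isometric embedding following a subregular epimorphism, and \emph{(ii)} the orthogonality $\ce\perp\cm$: in every commutative square $m\cdot u=v\cdot e$ with $e\in\ce$ and $m\in\cm$ there is a unique diagonal $w$ with $w\cdot e=u$ and $m\cdot w=v$. (Closure of $\ce$ and $\cm$ under composition and essential uniqueness of the factorization are then automatic, since $\ce$ consists of epimorphisms and $\cm$ of monomorphisms.)

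For \emph{(i)}, given $f\colon X\to Y$ I would form its kernel diagram $D_f\colon\cb\to\ck$ --- the required $\var$-kernel pairs exist since $\ck$ has all weighted limits (Corollary \ref{C:cc}) --- and then the basic colimit $C=\colim_B D_f$, which exists by Corollary \ref{C:cc}, with colimit map $e\colon X\to C$; this $e$ is a subregular epimorphism by Definition \ref{D:subr}. Because $(D_f l_\var,D_f r_\var)$ is the $\var$-kernel pair of $f$, the morphism $f$ satisfies the compatibility condition \thetag{COM} for $D_f$, so by the universal property of $e$ (Proposition \ref{P:COM}) there is a unique $m\colon C\to Y$ with $f=m\cdot e$.

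The crux is that this $m$ is an isometric embedding, and this is where the generator does its work. By Remark \ref{R:faith} it suffices to prove $d(v_1,v_2)=d(m\cdot v_1,m\cdot v_2)$ for parallel pairs $v_1,v_2\colon G\to C$; the general case $u_1,u_2\colon U\to C$ then follows by taking the supremum over $t\colon G\to U$ of the equalities $d(u_1\cdot t,u_2\cdot t)=d(m\cdot u_1\cdot t,m\cdot u_2\cdot t)$ together with nonexpansiveness of composition. Now $d(m\cdot v_1,m\cdot v_2)\le d(v_1,v_2)$ is nonexpansiveness; for the reverse, subregular projectivity of $G$ lets me factor $v_i=e\cdot w_i$ with $w_i\colon G\to X$. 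Putting $\var=d(f\cdot w_1,f\cdot w_2)=d(m\cdot v_1,m\cdot v_2)$, the pair $(w_1,w_2)$ factors through the $\var$-kernel pair $(D_f l_\var,D_f r_\var)$ of $f$, and since $e$ satisfies \thetag{COM} for $D_f$ we obtain $d(v_1,v_2)=d(e\cdot w_1,e\cdot w_2)\le d(e\cdot D_f l_\var,e\cdot D_f r_\var)\le\var$. Both the subregular-generator and the subregular-projectivity aspects of a subvarietal generator are used here; I expect this step to be the main obstacle, the remainder being bookkeeping.

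For \emph{(ii)}, let $m\cdot u=v\cdot e$ with $e\colon A\to B$ a subregular epimorphism and $m\colon C\to D$ an isometric embedding. By Lemma \ref{L:pro}, $e$ is the colimit map of its kernel diagram $D_e$. Then $u$ satisfies \thetag{COM} for $D_e$: indeed $d(u\cdot D_e l_\var,u\cdot D_e r_\var)=d(m\cdot u\cdot D_e l_\var,m\cdot u\cdot D_e r_\var)=d(v\cdot e\cdot D_e l_\var,v\cdot e\cdot D_e r_\var)\le d(e\cdot D_e l_\var,e\cdot D_e r_\var)\le\var$, the first equality being the isometry of $m$ and the last inequality the defining property of the $\var$-kernel pair of $e$. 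Hence $u$ factors through the colimit map $e$, say $u=w\cdot e$; then $m\cdot w\cdot e=m\cdot u=v\cdot e$, and since $e$ is an epimorphism, $m\cdot w=v$. The diagonal $w$ is unique because $m$ is a monomorphism. Together with \emph{(i)} and the preliminary observations this shows that $(\ce,\cm)$ is a factorization system on $\ck$.
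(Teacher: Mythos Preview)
Your proof is correct and follows essentially the same route as the paper's: factor a morphism through the colimit of its kernel diagram, use subregular projectivity of $G$ together with Remark~\ref{R:faith} to verify that the second factor is an isometric embedding, and for the diagonal fill-in use Lemma~\ref{L:pro} plus the isometry of $m$ to show that $u$ satisfies \thetag{COM}. The only addition is your explicit bookkeeping on isomorphisms and closure properties, which the paper leaves implicit.
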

 
 \begin{proof}
 (1) Existence of factorizations. Given a morphism $f\colon X\to Y$, we form its   kernel diagram (Definition \ref{D:ker}) $D\colon \cb \to \ck$. Let $e\colon X\to Z$ be the colimit map of $D$.
 
$$
\xymatrix@C=1pc@R=2.5pc{
& P_\var \ar@<0.5ex>[d]_{l_\var\  }
\ar @<-0.5ex>[d]^{\ r_\var} & & \\
&X \ar[rr]^f \ar[dr]_{e}&& Y\\
G\ar@<0.5ex>[rr]^{u_2}
\ar@<-0.5ex>[rr]_{u_1}
\ar@<0.5ex>[ur]^>>>>>>{v_2}
\ar@<-0.5ex>[ur]_>>>>>>{v_1}
\ar@{-->}@/^1pc/[uur]^t&& Z \ar[ur]_m&
}
$$
Then $f$ factorizes as $f= m\cdot e$. We know that $e$ is a subregular epimorphism. Let us prove that $m$ is an isometric embedding. Since $G$ is a subregular  generator, we just need to prove, using Remark \ref{R:faith}, that  for all pairs $u_1$, $u_2\colon G\to Z$ we have
$$
d(u_1, u_2)\leq d(m\cdot u_1, m\cdot u_2)\,.
$$

We know that $G$ is a subregular projective, hence $u_i$ factorizes as $u_i = e\cdot v_i$ ($ i=1,2$).
Put
$$
\var = d(m\cdot u_1, m\cdot u_2)\,.
$$
Then
$$
d(f\cdot v_1, f\cdot v_2) = d(m\cdot u_1, m\cdot u_2)\leq\var\,.
$$
Thus there exists $t\colon G\to D\var$ with $v_1 = l_\var\cdot t$ and $v_2= r_\var\cdot  t$. From $d( e\cdot Dl_\var, e\cdot Dr_\var)\leq \var $ we obtain
$$
d(u_1, u_2) = d(e\cdot Dl_\var\cdot t, e\cdot Dr_\var \cdot t) \leq d(e\cdot Dl_\var, e\cdot Dr_\var)\leq \var\,.
$$
We have proved  $d(u_1, u_2)\leq d(m\cdot u_1, m\cdot u_2)$. 

(2) The diagonal fill-in. Let a commutative square be given with $e$ a subregular epimorphism and $m$ an isometric embedding:
$$
\xymatrix@C=2pc@R=2pc{
D\var \ar@<0.5ex>[r]^{Dr_\var}
      \ar@<-0.5ex>[r]_{Dl_\var} & X \ar[r]^e \ar[d]_u &Y \ar[d]^v\\
& A\ar[r]_m& B
}
$$
Form the kernel diagram $D\colon \cb \to \ck$ of $e$ with colimit map $e$ (Lemma \ref{L:pro}). Since $d ( e\cdot Dl_\var, e\cdot Dr_\var)\leq \var$  implies $d(v\cdot e\cdot Dl_\var, v\cdot e\cdot r_\var)\leq \var$,  and 
 $m$ is an  isometric embedding, we get that 
$$
d(u\cdot Dl_\var , u\cdot Dr_\var)\leq \var \qquad \mbox{(for all} \ \var)\,.
$$
By the universal property of $e$ we obtain a morphism
$$
d\colon Y\to A \quad \mbox{with}\quad u=d\cdot e\,.
$$
Since $e$ is epic by Remark \ref{R:pro}, we conclude that $d$ is the desired diagonal: we have $v=m\cdot d$, and $d$ is unique.
\end{proof}

 In the definition \ref{D:abs}of abstract finiteness of $G$ only copowers of $G$ play a role.  The stronger (and more natural) property would consider morphisms from $G$  to arbitrary coproducts. For subvarietal generators this makes no difference:

 \begin{lemma}\label{L:abs} 
 Let $\ck$ have coproducts. Each morphism from a subvarietal generator $G$ to an arbitrary  coproduct
factorizes through a finite subcoproduct.
 \end{lemma}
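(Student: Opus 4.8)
The plan is to peel $f$ apart through two successive subregular epimorphisms built from $G$, applying abstract finiteness only at the very end, once everything in sight is a copower of $G$. Write $A=\coprod_{i\in I}A_i$; all the copowers and coproducts below exist, since $\ck$ has coproducts and $G$ has tensors (so $M\otimes G=\coprod_M G$ exists for every discrete $M$). Before starting I would record an auxiliary fact: \emph{a coproduct of subregular epimorphisms is again a subregular epimorphism}, provided the coproduct exists. If $e_j\colon D_ja\to C_j$ is the colimit map of a basic diagram $D_j\colon\cb\to\ck$, form $D=\coprod_jD_j$ by taking coproducts objectwise and morphismwise (this is again a basic diagram, $\cb$ being discretely enriched) and check via Proposition~\ref{P:COM} that $\coprod_je_j$ is the colimit map of $D$. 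The only ingredient is that for morphisms out of a coproduct one has $d\big(\coprod_jg_j,\coprod_jh_j\big)=\sup_jd(g_j,h_j)$, since the product in $\Met$ carries the supremum metric; this turns the compatibility condition for $D$ into the conjunction of those for the $D_j$, and likewise shows $\coprod_je_j$ is a quotient, hence epic, which yields uniqueness of the induced factorizations.

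Granting this, let $e_i\colon\ck(G,A_i)\otimes G\to A_i$ be the evaluation morphism $\widehat{\id}$, which is a subregular epimorphism since $G$ is a subregular generator. Then $e:=\coprod_ie_i\colon\coprod_i(\ck(G,A_i)\otimes G)\to A$ is a subregular epimorphism, so by subregular projectivity of $G$ we may write $f=e\cdot g$ for some $g\colon G\to\coprod_i(\ck(G,A_i)\otimes G)$. Next, for each $i$ let $b_i\colon|\ck(G,A_i)|\to\ck(G,A_i)$ be the identity-carried map; it is surjective, hence a subregular epimorphism in $\Met$ (Example~\ref{E:bas}), so $b_i\otimes G$ is a subregular epimorphism in $\ck$ (Remark~\ref{R:tt}, Lemma~\ref{L:ep}). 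Since $|\ck(G,A_i)|$ is discrete, $|\ck(G,A_i)|\otimes G=\coprod_{|\ck(G,A_i)|}G$, so $\coprod_i(b_i\otimes G)$ is a subregular epimorphism with domain $\coprod_MG$, where $M$ is the disjoint-union set $\coprod_i|\ck(G,A_i)|$. Applying subregular projectivity of $G$ once more, $g$ factors through this map, say $g=\big(\coprod_i(b_i\otimes G)\big)\cdot g'$ with $g'\colon G\to\coprod_MG$.

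Now abstract finiteness of $G$ applies directly to $g'\colon G\to\coprod_MG$: it factors through some finite subcopower $\coprod_{M_0}G$ with $M_0\subseteq M$ finite. As $M=\coprod_i|\ck(G,A_i)|$, the finite set $M_0$ meets only finitely many summands; let $I_0\subseteq I$ be this finite set of indices. Chasing the coproduct injections, $g'$ factors through $\coprod_{i\in I_0}\big(\coprod_{|\ck(G,A_i)|}G\big)$, hence $g$ factors through $\coprod_{i\in I_0}(\ck(G,A_i)\otimes G)$, hence $f=e\cdot g$ factors through the subcoproduct $\coprod_{i\in I_0}A_i\hookrightarrow A$, which is exactly the claim. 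The step I expect to be the main obstacle is the auxiliary fact about coproducts of subregular epimorphisms: it requires carefully unwinding the universal property of Proposition~\ref{P:COM} together with the computation of distances of morphisms out of a coproduct; the closing diagram chase is then routine, though one must be careful to keep track of all the coproduct injections relating $\coprod_{M_0}G$, $\coprod_{M}G$, $\coprod_i(\ck(G,A_i)\otimes G)$ and $A$.
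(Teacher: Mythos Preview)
Your argument is correct and follows essentially the same route as the paper's proof: both use the auxiliary fact that coproducts of subregular epimorphisms are subregular, then peel $f$ through the coproduct of evaluation maps and the coproduct of the identity-carried maps $|\ck(G,A_i)|\to\ck(G,A_i)$ tensored with $G$, finally invoking abstract finiteness on the resulting copower of $G$. The only minor differences are cosmetic: the paper dispatches the auxiliary fact in one line by citing that weighted colimits commute with coproducts, whereas you verify it directly via Proposition~\ref{P:COM}; and the paper factors $f$ through the composite $v\cdot w$ in a single step, whereas you apply subregular projectivity twice in succession (which is arguably cleaner, since it avoids any implicit appeal to closure of subregular epimorphisms under composition).
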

 
 \begin{proof}
 First observe that a coproduct of subregular epimorphisms 
$$
e_i\colon X_i \to M_i \qquad (i\in I)
$$
 is a subregular epimorphism. Indeed, given basic  diagrams $D_i\colon \cb \to \ck$ with colimit maps $e_i$ ($i\in I$), then the diagram $D\colon \cb\to \ck$ defined by $D(-) = \coprod\limits_{i \in I} D_i(-)$ has the colimit map  $ \coprod\limits_{i \in I} e_i$, since weighted colimits commute with coproducts..
 
 Given  a morhism $f\colon G\to   \coprod\limits_{i \in I} C_i$, we use the following subregular epimorphisms 
$$
v =  \coprod\limits_{i\in I} \widehat \id \colon \coprod\limits_{i\in I} \ck (G, C_i)\oti G\to \coprod\limits_{i\in I}C_i
$$
 (Example \ref{E:n}) and 
$$
w\colon \coprod\limits_{i\in I} |\ck (G, C_i)| \oti G\to\coprod\limits_{i\in I} \ck (G, C_i) \oti G\,.
$$
 The latter is the coproduct of $w_i\oti G$ for the morphisms $w_i\colon |\ck (G, C_i)| \to \ck(G, C_i)$ carried by identity. 
 Since $G$ is subregularly projective, $f $ factorizes through $w\cdot v$: we have 
 $
 f'\colon G \to \coprod\limits_{i\in I} \ck (G, C_i)| \oti G $
 making  the following square commutative:
$$
\xymatrix@=3pc{ 
G\ar[r]^{f} \ar[d]_{f'} & \coprod C_i\\
\coprod |\ck(G, C_i)|\otimes G \ar[r]_{w} & \coprod \ck (G, C_i) \otimes G
\ar[u]_{v}
}
$$
 The codomain of $f'$ is a copower of $G$, thus there is a finite set $M\subseteq \coprod\limits_{i\in I} |\ck (G, C_i)|$ such that $f'$ factorizes through the corresponding subcopower. We have a finite set $J\subseteq I$ with $M\subseteq \coprod\limits_{j\in J} |\ck (G, C_i)|$. It follows easily from $f= f'\cdot w \cdot v$ that  $f$ factorizes through the subcoproduct $\coprod\limits_{j\in J} C_j$.
 \end{proof}

 \section{Varieties of Quantitative Algebras}\label{sec5}
 
 Here our main result is proved: a characterization of categories equivalent to varieties of quantitative algebras. Recall the free algebras $F_\Sigma M$ from Remark \ref{R:free}. We will use terms in $T_\Sigma X = U_\Sigma F_\Sigma  X$, where $X$ is a finite set (or discrete space). Recall further  that $\var$ denotes non-negative reals.
 
 \begin{defi}[{\cite{MPP17}}]
 (1) A \emph{quantitative equation} is an expression
 $$
 t=_\var t'
 $$
 where $t$ and $t'$ are terms in $T_\Sigma X$ for a finite set $X$ (of variables) and $\var \geq 0$ is a real number.
 
 (2) An algebra $A$ \emph{satisfies} $t=_\var t'$ if for every interpretation of the variables $f\colon X\to A$ the corresponding homomorphism $\bar f \colon T_\Sigma X \to A$ fulfils
 $$
 d\big(\bar f(t), \bar f(t')\big) \leq \var\,.
 $$
 
 (3) A \emph{variety} (aka 1-basic variety) of quantitative algebras is a full subcategory of $\Sigma$-$\Met$ specified by a set  of quantitative equations.
   \end{defi}
   
\begin{exs} (1) \emph{Quantitative monoids} are monoids acting on a metric space $M$ such that the multiplication is a nonexpanding map from $M\times M$ (with the maximum metric) to $M$. This is a variety presented by the usual signature (of one binary symbol and one constant $e$) and the usual equations:
$$
(xy) z =_0 x(yz)\,, \quad xe =_0 x \quad \mbox{and}\quad ex=_0 x\,.
$$

(2) \emph{$\var$-commutative monoids} are quantitative monoids which satisfy $d(ab, ba) \leq \var$ for all elements $a$, $b$. They are specified by the above equations plus
$$
xy =_\var yx\,.
$$

(3) The number $\var$ above is required to be rational in \cite{MPP17}, rather than real. But this is unimportant. An equation $t=_\delta t'$ where $\delta$ is irrational can be simply substituted by quantitative equations $t=_{\var_n} t'$ ($n\in \N$) for an arbitrary decreasing sequence of rational numbers $\var_n\geq \delta$ with $\lim\limits_{n\to \infty} \var_n =\delta$.
\end{exs}

\begin{defi}
 Let $A$ be an algebra in $\Sigma$-$\Met$.

(1) By a \emph{subalgebra} is meant a metric subspace closed under the operations. More precisely: subalgebras of $A$ are subobjects represented by morphisms $m\colon B\to \ca$ of $\Sigma$-$\Met$ carried by isometric embeddings (Definition \ref{D:iso}).

(2) By a 
\emph{homomorphic image} of $A$ is meant a quotient object represented by a surjective homomorphism $e\colon A\to B$ in $\Sigma$-$\Met$. Those are the subregular epimorphisms (Proposition \ref{P:sur}). 
\end{defi}

The following theorem was stated in \cite{MPP17}. The proof there is incomplete, for a complete proof see B19--B20 in \cite{MU} or  Thm. 5.10 in \cite{R}.

\begin{birk}\label{T:BVT}
 A full subcategory of $\Sigma$-$\Met$ is a variety of quantitative algebras iff it is closed under products, subalgebras, and homomorphic images.
 \end{birk}
 
 \begin{corollary}\label{C:var-epi}
 Subregular  epimorphisms in a variety $\cv$ are precisely the surjective morphisms.
 \end{corollary}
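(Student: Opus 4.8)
The plan is to deduce this directly from the Birkhoff Variety Theorem \ref{T:BVT} and Proposition \ref{P:sur}, using that $\cv$ is a full subcategory of $\Sigma$-$\Met$ closed under products, subalgebras and homomorphic images. First I would record that $\cv$ has $\var$-kernel pairs and that they are computed as in $\Sigma$-$\Met$: the $\var$-kernel pair of $f\colon A\to B$ is the subobject of $A\times A$ on all pairs $(a_1,a_2)$ with $d(fa_1,fa_2)\le\var$, and this set is a subalgebra of $A\times A$ since each $\sigma_B$ is nonexpanding for the maximum metric; hence it lies in $\cv$, and by fullness its two projections exhibit it as the $\var$-kernel pair inside $\cv$. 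Consequently every kernel diagram of a morphism of $\cv$ is again a basic diagram in $\cv$.

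For the inclusion ``surjective $\Rightarrow$ subregular epimorphism in $\cv$'', take a surjective homomorphism $f\colon A\to B$ in $\cv$ and form its kernel diagram $D\colon\cb\to\cv$. By Proposition \ref{P:sur} the morphism $f$ is the colimit map of $D$ in $\Sigma$-$\Met$, and I would check, via the characterization in Proposition \ref{P:COM}, that it is the colimit map of $D$ already in $\cv$: it is a quotient (a condition on parallel pairs out of $B$, inherited by fullness), and any $g\colon A\to C$ with $C\in\cv$ satisfying (COM) factorizes through $f$ in $\Sigma$-$\Met$ via some $h\colon B\to C$, which lies in $\cv$ by fullness. Thus $f$ is a subregular epimorphism of $\cv$.

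For the converse, let $f\colon A\to B$ be a subregular epimorphism of $\cv$ and factor it in $\Sigma$-$\Met$ as $f=m\cdot e$, where $e\colon A\to B_0$ is the corestriction onto the image subalgebra (a surjective homomorphism) and $m\colon B_0\hookrightarrow B$ is the inclusion, an isometric embedding; $B_0\in\cv$ because $\cv$ is closed under subalgebras. Since $m$ is an isometric embedding, $d(fq_1,fq_2)=d(eq_1,eq_2)$ for every parallel pair $q_1,q_2$ into $A$, so $f$ and $e$ have the same $\var$-kernel pairs and hence the same kernel diagram $D$ (up to the canonical natural isomorphism). Now $f$ is the colimit map of $D$ in $\cv$ by Lemma \ref{L:pro}, and by the previous paragraph together with Lemma \ref{L:pro} so is $e$; uniqueness of colimit maps (Proposition \ref{P:COM}) then yields an isomorphism $i\colon B\to B_0$ with $i\cdot f=e$, whence $f=i^{-1}\cdot e$ is surjective.

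The argument is essentially bookkeeping; the only point that needs care is that the universal property of ``colimit map'' transfers correctly along the full inclusion $\cv\hookrightarrow\Sigma$-$\Met$ --- harmless for the first inclusion because the test objects already lie in $\cv$, and handled for the converse by exhibiting $f$ and $e$ as colimit maps of one and the same diagram in $\cv$.
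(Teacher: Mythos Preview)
Your proof is correct and follows essentially the same line as the paper: use closure of $\cv$ under subalgebras and products to see that kernel diagrams of $\cv$-morphisms live in $\cv$, then transfer the $\Sigma$-$\Met$ result of Proposition~\ref{P:sur} along the full inclusion. For the converse the paper simply remarks that the argument of Proposition~\ref{P:sur}(1) goes through verbatim (the image lies in $\cv$ and the corestriction satisfies \thetag{COM}, so the inclusion splits), whereas you instead invoke Lemma~\ref{L:pro} twice and use uniqueness of colimit maps --- a harmless stylistic variation.
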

 
 Indeed, this is true for $\cv =\Sigma$-$\Met$ (Proposition \ref{P:sur}). Given a surjective morphism $f\colon X\to Y$ in $\cv$  we thus know that $f$ is the colimit morphism of its kernel diagram $D\colon \cb \to \Sigma$-$\Met$ (Lemma \ref{L:pro}). Each $D\var$ is a subalgebra of $X\times X$, thus $D\var \in \cv$. So $D$ is a diagram in $\cv$ with the colimit map $f$ in $\Sigma$-$\Str$ which lies in $\cv$. It follows that $f$ is a colimit map of $D$ in $\cv$. The converse implication is proved  as Proposition \ref{P:sur}.
 
 \begin{prop}[{\cite{MPP17}}] 
Every variety $\cv$ has free algebras: the forgetful functor from $\cv$ to $\Met$ has an enriched left adjoint $F_{\cv} \colon \Met \to \cv$.
\end{prop}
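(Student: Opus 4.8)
The forgetful functor of $\cv$ is the restriction of $U_\Sigma\colon \Sigma\text{-}\Met\to\Met$ to the full subcategory $\cv$, hence it is enriched. Since $\Sigma\text{-}\Met$ already has the enriched free functor $F_\Sigma$ (Remark \ref{R:free}, Lemma \ref{L:free}), it suffices to exhibit $\cv$ as a reflective subcategory of $\Sigma\text{-}\Met$ and set $F_\cv = R\cdot F_\Sigma$, where $R$ is the reflector. The key extra point is that the reflection unit will be carried by surjective homomorphisms; this is what upgrades the composite adjunction to an enriched one.

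\textbf{Construction of the reflection.} Fix an algebra $A$ in $\Sigma\text{-}\Met$. Any surjective homomorphism out of $A$ has underlying set a quotient of $|U_\Sigma A|$, so up to isomorphism over $A$ there is a \emph{set} $\{e_i\colon A\to B_i\}_{i\in I}$ of the surjective homomorphisms with $B_i\in\cv$. Let $r_A\colon A\to RA$ be the corestriction of $\langle e_i\rangle_{i\in I}\colon A\to \prod_{i\in I}B_i$ to its image subalgebra (the subspace on the image, closed under the operations), so $r_A$ is surjective and $RA$ embeds isometrically into $\prod_i B_i$. By the Birkhoff Variety Theorem \ref{T:BVT}, $RA\in\cv$, being a subalgebra of a product of algebras of $\cv$. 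To see that $r_A$ is the reflection: given a homomorphism $g\colon A\to C$ with $C\in\cv$, factor it as $g=m\cdot e$ with $e$ surjective and $m$ an isometric embedding (as in the proof of Lemma \ref{L:co}); then the codomain of $e$ is a subalgebra of $C\in\cv$, hence lies in $\cv$, so $e$ is (isomorphic to) some $e_i$. The $i$-th projection restricts to a homomorphism $p\colon RA\to B_i$ with $p\cdot r_A=e$, whence $(m\cdot p)\cdot r_A=g$; and $r_A$ is epic, being a quotient (Proposition \ref{P:sur} and Definition \ref{D:iso}), so the factorization is unique. Thus $R$ is a reflector, and $F_\cv:=R\cdot F_\Sigma\colon\Met\to\cv$ is left adjoint to $U_\cv$, with unit $\iota_M=U_\Sigma(r_{F_\Sigma M})\cdot\eta_M\colon M\to U_\cv F_\cv M$, where $\eta_M$ is the unit of $F_\Sigma\dashv U_\Sigma$.

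\textbf{Enrichment.} It remains to check that for $A\in\cv$ and a metric space $M$ the bijection
\[
\cv(F_\cv M, A)\longrightarrow\Met(M, U_\cv A),\qquad g\longmapsto U_\cv g\cdot\iota_M,
\]
is an isometry. Writing $c=r_{F_\Sigma M}\colon F_\Sigma M\to F_\cv M$, this map sends $g$ to $U_\Sigma(g\cdot c)\cdot\eta_M$, i.e.\ to the adjoint transpose of $g\cdot c$ under $F_\Sigma\dashv U_\Sigma$. Now $c$ is a surjective homomorphism, hence a quotient (Proposition \ref{P:sur}, Definition \ref{D:iso}), so $d(g_1,g_2)=d(g_1\cdot c, g_2\cdot c)$ for all parallel $g_1,g_2\colon F_\cv M\to A$; and by Lemma \ref{L:free}, $d(g_1\cdot c, g_2\cdot c)$ equals the distance of their transposes $M\to U_\Sigma A$. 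Composing these two isometries shows the displayed bijection is distance-preserving, so $F_\cv\dashv U_\cv$ is an enriched adjunction.

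\textbf{Main obstacle.} The only genuinely delicate step is the set-theoretic one: one must know that, up to isomorphism over a fixed algebra, its surjective homomorphisms with codomain in $\cv$ form a set, so that the product $\prod_i B_i$ is legitimate. This is precisely the device already used for coequalizers in Lemma \ref{L:co}. Everything else---closure of $\cv$ under subalgebras and products, the surjective/isometric factorization in $\Sigma\text{-}\Met$, and enrichment of $F_\Sigma\dashv U_\Sigma$---has been established above.
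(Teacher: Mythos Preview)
Your proof is correct. The enrichment argument is essentially the same as the paper's: both use that the canonical comparison $F_\Sigma M \to F_\cv M$ is a surjective homomorphism, hence a quotient in the sense of Definition \ref{D:iso}, so precomposition with it preserves distances; combined with Lemma \ref{L:free} this yields that the adjunction bijection is an isometry.

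The difference lies in the existence part. The paper simply cites \cite{MPP17} for the existence of free algebras in $\cv$ and then deduces enrichment from the surjectivity of $\varrho_M\colon F_\Sigma M\to F_\cv M$ (which follows from the Birkhoff Variety Theorem). You instead give a self-contained construction of the reflector $R\colon \Sigma\text{-}\Met\to\cv$ via the image of the product of all surjective quotients into $\cv$, reusing the device from Lemma \ref{L:co}. This buys you independence from \cite{MPP17} at the cost of a little extra work, and it makes the surjectivity of the reflection unit (your $c=r_{F_\Sigma M}$) immediate by construction rather than something to be extracted from Birkhoff after the fact. Both routes are short; yours is the more explicit one.
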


 Indeed, this holds for $F_\Sigma$ (Corollary \ref{C:free}). The existence of free algerbras was proved in \cite{MPP17}. The canonical natural transformation $\varrho \colon F_\Sigma \to F_{\cv}$ has surjective components $\varrho_X$ (using \ref{T:BVT}), thus they are quotients. This easily implies that $F_{\cv}$ is enriched.

\begin{remark}\label{R:effvar}
	Varieties have effective subcongruences. Indeed,  we know that $\Sigma$-$\Met$ does (Corollary \ref{C:iff}). Every subcongruence $D$ in $\cv$ is also  a subcongruence in $\Sigma$-$\Met$, since $\cv$ is closed under pullbacks (Remark \ref{R:pb}). Let $f\colon Da \to C$ be a colimit map of $D$ in $\Sigma$-$\Met$. It is a surjective homomorphism (Corollary \ref{C:var-epi}) whose kernel diagram is $D$ (Lemma \ref{L:pro}). Since $\cv$ is closed under homomorphic images, we conclude $C\in \cv$. Thus $f$ is a morphism of $\cv$ with the kernel diagram $D$.
\end{remark}

Recall subvarietal generator (Definition \ref{D:vg}).
 \begin{prop}\label{P:ge} 
 The free algebra $G$ on one generator in a variety $\cv$ is a subvarietal generator.
 \end{prop}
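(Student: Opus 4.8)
The plan is to verify the three defining properties of a subvarietal generator (Definition \ref{D:vg}) for the free algebra $G = F_{\cv} I$ on one generator in the variety $\cv$: that $G$ is abstractly finite, that it is a subregular projective, and that it is a subregular generator. The template is Example \ref{E:reg}, which handled the case $\cv = \Sigma$-$\Met$; the task is to transfer each argument across the reflective-like situation between $\cv$ and $\Sigma$-$\Met$, using the Birkhoff Variety Theorem \ref{T:BVT} and its corollaries (closure under products, subalgebras, homomorphic images, effectivity of subcongruences from Remark \ref{R:effvar}, and the fact from Corollary \ref{C:var-epi} that subregular epimorphisms in $\cv$ are exactly the surjections).

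First I would record that $\cv(G, -) \colon \cv \to \Met$ is naturally isomorphic to the forgetful functor $U_{\cv} \colon \cv \to \Met$, since $G$ is free on one generator; more generally $\cv(n \otimes G, -) \cong U_{\cv}^n$ for a finite discrete $n$, and $n \otimes G = \coprod_n G = F_{\cv}(n)$, the free algebra on the $n$-element discrete space. For \emph{abstract finiteness}: a morphism $f \colon G \to \coprod_M G = F_{\cv}(M)$ is determined by the single element $f(x) \in U_{\cv} F_{\cv}(M)$. One needs that every element of $U_{\cv} F_{\cv}(M)$ already lies in the image of $U_{\cv} F_{\cv}(M_0)$ for some finite $M_0 \subseteq M$; this follows from the corresponding fact for $F_\Sigma$ (Example \ref{E:abs}(2)) together with the surjectivity of the canonical comparison $\varrho_M \colon F_\Sigma M \to F_{\cv} M$, so that $f$ factorizes through the subcoproduct $F_{\cv}(M_0) \hookrightarrow F_{\cv}(M)$.

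For \emph{subregular projectivity}: by Corollary \ref{C:var-epi} subregular epimorphisms in $\cv$ are surjective homomorphisms, and $\cv(G, e) \cong U_{\cv} e$ is then a surjection of metric spaces, hence a subregular epimorphism of $\Met$ by Proposition \ref{P:sur}; this is exactly the statement that $\cv(G, -)$ preserves subregular epimorphisms, i.e.\ that $G$ is subregularly projective. For \emph{subregular generation}: for every $A \in \cv$ the evaluation morphism $\widehat{\id} \colon \cv(G, A) \otimes G \to A$ is, as in Example \ref{E:reg}, the unique homomorphic extension of $\id_{U_{\cv} A}$, so its underlying map is surjective; being a surjective homomorphism in $\cv$, it is a subregular epimorphism by Corollary \ref{C:var-epi}. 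One subtlety to address here is that $\cv(G, A) \otimes G$, the tensor in $\cv$, agrees with $F_{\cv}(\cv(G,A))$ — this is again the free-on-one-generator identity $M \otimes G = F_{\cv}(M)$ valid for any (not necessarily discrete) space $M$, which is justified exactly as in Example \ref{E:reg} by the adjunction chain through $U_{\cv}$ and $\cv(G, F_{\cv} I, -) \cong U_{\cv}$.

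The main obstacle I expect is purely the bookkeeping around the two tensor structures and the comparison functor $F_\Sigma \to F_{\cv}$: one must be careful that "tensor with $G$ in $\cv$" is computed in $\cv$ and not in $\Sigma$-$\Met$, and that surjectivity of homomorphisms is preserved by the relevant restrictions and corestrictions. None of the three properties requires a genuinely new idea beyond Example \ref{E:reg}; the content is checking that each ingredient used there (free algebra on one generator represents the forgetful functor, surjective homomorphisms are subregular epis, abstract finiteness of the free algebra on one generator) survives the passage from $\Sigma$-$\Met$ to the subvariety $\cv$, which the Birkhoff Variety Theorem and Corollary \ref{C:var-epi} guarantee.
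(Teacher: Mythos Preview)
Your proposal is correct and, for abstract finiteness and subregular generation, essentially expands what the paper compresses into ``the argument is completely analogous to Example~\ref{E:reg}''. For subregular projectivity, however, you take a more direct route than the paper: you invoke Corollary~\ref{C:var-epi} (subregular epimorphisms in $\cv$ are exactly the surjective homomorphisms) and observe that $\cv(G,-)\cong U_{\cv}$ sends surjections to surjections, hence to subregular epimorphisms in $\Met$. The paper instead argues that $U_{\cv}$ preserves \emph{colimits of subcongruences}, by writing $U_{\cv}=U_\Sigma\cdot E$ and using that $U_\Sigma$ creates such colimits (Corollary~\ref{C:cr}) while the embedding $E\colon\cv\hookrightarrow\Sigma\text{-}\Met$ preserves them because $\cv$ is closed under homomorphic images. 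Your argument is shorter and entirely sufficient for the proposition as stated; the paper's argument establishes the stronger subeffectivity of $G$, which it later reuses in the corollary following Theorem~\ref{T:main}.
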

 
  \begin{proof}
 (1) $G$ is an abstractly finite subregular subgenerator: the argument is completely analogous to Example \ref{E:reg}.
 
 (2) $G$ is a subregular projective. Indeed, its hom-functor is naturally isomorpphic to the forgetful functor  $U_\cv\colon \cv \to \Met$ of $\cv$ which  preserves colimits of subcongruences because  the forgetful functor $U_\Sigma$ of $\Sigma$-$\Met$ creates them (Corollary \ref{C:cr}), and we have
  $$
  U_\cv = U_{\Sigma}\cdot E
  $$
  for the embedding $E\colon \cv \hookrightarrow \Sigma$-$\Met$. Now $E$ preserves colimits of subcongruences because the colimit morphisms in 
 $\Sigma$-$\Met$ are surjective and $\cv$ is closed under homomorphism images (Theorem \ref{T:BVT}). The surjectivity of colimit maps in  $\Sigma$-$\Met$ follows from $U_\Sigma$ creating colimits of subcongruences, and from the description of those colimits in $\Met$ (Construction \ref{C:eq}).
 \end{proof}
 
 \begin{corollary}\label{C:var} 
 Every variety of quantitative algebras has weighted limits and colimits.
 Its forgetful functor to $\Met$ creates limits.
  \end{corollary}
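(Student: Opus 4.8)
The plan is to obtain the existence of weighted limits and colimits from Corollary~\ref{C:cc}, and to deduce the creation of limits by combining the fact that $U_\Sigma$ creates weighted limits (Remark~\ref{R:pr}) with the closure of $\cv$ under weighted limits inside $\Sigma$-$\Met$.

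First I would check that $\cv$ has coequalizers. By Lemma~\ref{L:co} the category $\Sigma$-$\Met$ has coequalizers, and the proof there exhibits the coequalizer of a parallel pair as a \emph{surjective} homomorphism. Since $\cv$ is closed under homomorphic images (Birkhoff Variety Theorem~\ref{T:BVT}), the coequalizer in $\Sigma$-$\Met$ of any parallel pair in $\cv$ again lies in $\cv$ and is its coequalizer there; in particular $\cv$ has reflexive coequalizers. By Proposition~\ref{P:ge} the free algebra $G = F_\cv I$ on one generator is a subvarietal generator of $\cv$. Hence Corollary~\ref{C:cc} applies and yields that $\cv$ has all weighted limits and colimits.

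For the creation of limits, write $U_\cv = U_\Sigma \cdot E$, where $E\colon \cv \hookrightarrow \Sigma$-$\Met$ is the full replete inclusion. Since $U_\Sigma$ creates weighted limits (Remark~\ref{R:pr}), it is enough to show that $E$ creates them too, and for this it suffices that $\cv$ be closed in $\Sigma$-$\Met$ under weighted limits (a full replete subcategory closed under a class of limits creates those limits). As weighted limits are generated by conical products, equalizers, and cotensors, I would verify these three closure properties in turn. Closure under products is part of the Birkhoff Variety Theorem~\ref{T:BVT}. An equalizer in $\Sigma$-$\Met$ of $f,g\colon A \to B$ is the metric subspace of $A$ on the elements $a$ with $f(a) = g(a)$; since $f$ and $g$ are homomorphisms this subspace is a subalgebra of $A$, hence lies in $\cv$. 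For the cotensor of an object $A \in \cv$ by a metric space $M$: as $U_\Sigma$ creates weighted limits, its underlying metric space is the cotensor of $U_\Sigma A$ by $M$ in $\Met$, namely the space of nonexpanding maps $M \to U_\Sigma A$ with the supremum metric, and its $\Sigma$-operations are computed pointwise. Therefore this cotensor is the subalgebra of the power $\prod_{|M|} A$ consisting of the tuples that form a nonexpanding map, and it lies in $\cv$ by the Birkhoff Variety Theorem (closure under products and subalgebras). Consequently $\cv$ is closed under all weighted limits in $\Sigma$-$\Met$, so $E$ creates them, and hence so does $U_\cv = U_\Sigma \cdot E$.

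The step I expect to demand the most care is the last closure property: one must identify the cotensor of $A$ by $M$ in $\Sigma$-$\Met$ explicitly as a subalgebra of the power $\prod_{|M|} A$, which requires pinning down both its underlying metric space (the space of nonexpanding maps with the supremum metric, read off from $U_\Sigma$ creating weighted limits together with the standard description of cotensors in $\Met$) and the pointwise nature of its operations. The remaining ingredients are direct invocations of Corollary~\ref{C:cc}, of Lemma~\ref{L:co}, and of the Birkhoff Variety Theorem.
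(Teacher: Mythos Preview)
Your proposal is correct and follows essentially the same approach as the paper: deduce weighted (co)limits from Corollary~\ref{C:cc} after checking reflexive coequalizers via Lemma~\ref{L:co} and closure under homomorphic images, and deduce creation of limits from Remark~\ref{R:pr} together with closure of $\cv$ under limits in $\Sigma$-$\Met$. The only difference is cosmetic: the paper simply asserts that $\cv$ is closed under limits by the Birkhoff Variety Theorem, whereas you unpack this by reducing weighted limits to products, equalizers, and cotensors and exhibiting each as a subalgebra of a product.
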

 Creation of limits follows from Remark \ref{R:pr} and the fact that, due to Theorem \ref{T:BVT}, the variety $\cv \subseteq \Sigma$-$\Met$ is closed under limits. The existence of weighted colimits follows from  Corollary \ref{C:cc}. We just need to observe that the variety $\cv$ has reflexive coequalizers because $\Sigma$-$\Met$ does (Lemma \ref{L:co}), and
 $\cv$ is closed under homomorphic images. 
  

\begin{theorem}\label{T:main} 
A metric-enriched category is equivalent to a variety of quantitative algebras iff it has
reflexive coequalizers, effective subcongruences and a subvarietal generator.
\end{theorem}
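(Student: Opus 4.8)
This is a matter of assembling facts proved above: a variety of quantitative algebras has weighted limits and colimits, in particular reflexive coequalizers (Corollary~\ref{C:var}); it has effective subcongruences (Remark~\ref{R:effvar}); and its free algebra on one generator is a subvarietal generator (Proposition~\ref{P:ge}).

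\textbf{If direction: the plan.} Let $\ck$ have reflexive coequalizers, effective subcongruences and a subvarietal generator $G$. The plan is to exhibit a full embedding of $\ck$ into some $\Sigma\text{-}\Met$ whose replete image is closed under products, subalgebras and homomorphic images, and then to invoke the Birkhoff Variety Theorem~\ref{T:BVT}. Concretely, I would put $\Sigma_n=|\ck(G,n\otimes G)|$ and let $E\colon\ck\to\Sigma\text{-}\Met$ send $K$ to the space $\ck(G,K)$ equipped, for each $\sigma\in\Sigma_n$, with the operation $\sigma_{EK}(a_1,\dots,a_n)=[a_1,\dots,a_n]\cdot\sigma$, where $[a_1,\dots,a_n]\colon n\otimes G\to K$ is the copairing (recall $n\otimes G=\coprod_n G$); these operations are nonexpanding because composition is, so $EK$ is a quantitative algebra and $E$ is an enriched functor. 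By Remark~\ref{R:faith} (using that $G$ is a subregular generator) $E$ is faithful and isometric on hom-spaces. Fullness of $E$ is the first substantial point: given a homomorphism $h\colon EA\to EB$, applying its underlying map coordinate-wise defines, under the isomorphisms $\ck(n\otimes G,-)\cong\ck(G,-)^n$ (coproduct versus max-metric product), a transformation between the restrictions of $\ck(-,A)$ and $\ck(-,B)$ to the dense subcategory $\ca$ of finite copowers of $G$ (Theorem~\ref{T:cc}); naturality against a morphism $\psi\colon n\otimes G\to m\otimes G$ reduces, via the coprojections $\iota_j\colon G\to n\otimes G$, to the homomorphism property of $h$ applied to the operation symbols $\psi\cdot\iota_j\in\Sigma_m$, so density produces $\bar h\colon A\to B$ in $\ck$ with $E\bar h=h$.

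\textbf{Closure of the image.} It then remains to show that $\cv:=E[\ck]$, closed under isomorphism, is closed in $\Sigma\text{-}\Met$ under $P$, $S$ and $H$. For $P$: $E$ preserves products, since $\ck(G,-)$ is representable and $U_\Sigma$ creates weighted limits (Remark~\ref{R:pr}). For $S$: given a subalgebra $m\colon B\rightarrowtail EA$ with inclusion $\iota\colon B\hookrightarrow\ck(G,A)$, take the $(\ce,\cm)$-factorization (Proposition~\ref{P:fac}) of $\hat\iota\colon B\otimes G\to A$ (Notation~\ref{N:hat}) as a subregular epimorphism $B\otimes G\twoheadrightarrow A'$ followed by an isometric embedding $A'\rightarrowtail A$, and verify $EA'\cong B$ by checking $\ck(G,A')=B$ inside $\ck(G,A)$: the inclusion $B\subseteq\ck(G,A')$ is clear, and conversely a morphism $G\to A'$ factors --- using that $G$ is subregularly projective, that $|B|\to B$ is surjective (so $|B|\otimes G\to B\otimes G$ is a subregular epimorphism, Example~\ref{E:bas} and Remark~\ref{R:tt}), and that $G$ is abstractly finite --- through a finite subcopower of $|B|\otimes G$, whence it equals a single operation of $EA$ applied to finitely many elements of $B$ and so lies in $B$. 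For $H$: let $e\colon EA\twoheadrightarrow C$ be a surjective homomorphism and $D$ its kernel diagram in $\Sigma\text{-}\Met$, a subcongruence on $EA$. Each $D\var$ is a subalgebra of $EA\times EA\cong E(A\times A)$, hence lies in $\cv$ by the $S$-step; since $E$ is fully faithful (so reflects the limits, relations and factorization conditions of Definition~\ref{D:proeq}) and $\ck$ has the needed weighted limits (Corollary~\ref{C:cc}), $D$ transports to a subcongruence $\bar D$ on $A$ in $\ck$ with $E\bar D\cong D$. By effectivity of subcongruences $\bar D$ is the kernel diagram of some morphism, and replacing that morphism by the subregular-epimorphism part of its $(\ce,\cm)$-factorization gives a subregular epimorphism $q\colon A\to K$ whose kernel diagram is still $\bar D$. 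Then $Eq$ is surjective ($G$ is subregularly projective) with kernel diagram $\cong D$, so by Lemma~\ref{L:pro} both $e$ and $Eq$ are colimit maps of $D$; hence $C\cong EK\in\cv$. Thus $\cv$ is a variety by Theorem~\ref{T:BVT}, and $E\colon\ck\to\cv$, being full, faithful and essentially surjective, is an equivalence.

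\textbf{Main obstacle.} I expect the $H$-step to be the hard part: it is the only place where effectivity of subcongruences is genuinely used, and it rests on the somewhat delicate verification that a subcongruence all of whose objects lie in $\cv$ can be pulled back through the full embedding $E$ to an honest subcongruence in $\ck$ --- i.e.\ that $E$ reflects $\var$-reflexivity, symmetry, transitivity and the continuity (limit) condition, and that all the limits in play already exist in $\ck$, which is precisely where reflexive coequalizers and the subvarietal generator re-enter via Corollary~\ref{C:cc}. The fullness argument and the $S$-step are also more than bookkeeping, but they follow the classical Lawvere pattern, with the metric subtleties absorbed by Example~\ref{E:bas} and Remark~\ref{R:faith}.
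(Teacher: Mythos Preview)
Your proposal is correct and follows essentially the same route as the paper: the same signature $\Sigma_n=|\ck(G,n\otimes G)|$, the same functor $E$, fullness via the density of finite copowers (Theorem~\ref{T:cc}), and the Birkhoff Variety Theorem applied to the replete image, with the $S$-step handled by the $(\ce,\cm)$-factorization of $\hat\iota$ and abstract finiteness exactly as in the paper. The only noteworthy difference is at the end of the $H$-step: where the paper constructs the inverse isomorphism $\bar h\colon E\bar K\to A$ by hand (choosing lifts through the subregular epimorphism $\bar e$ and checking nonexpansiveness elementwise), you instead observe that $E$ preserves $\var$-kernel pairs (being a limit-preserving enriched functor into a category where $U_\Sigma$ creates limits), so $Eq$ and $e$ share the kernel diagram $D$ and are both its colimit map by Lemma~\ref{L:pro}, whence $C\cong EK$. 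Your conclusion is cleaner and avoids the explicit calculation; the paper's version has the advantage of making the role of subregular projectivity of $G$ visible once more, but both arguments are valid and interchangeable.
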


\begin{proof}
 Necessity follows from the above corollary, Remark \ref{R:effvar}, and Proposition \ref{P:ge}.

Sufficiency: let $G$ be a subvarietal generator of an enriched category $\ck$. 
Define a signature $\Sigma$ by
$$
\Sigma_n = |\ck (G, \coprod_{n} G) |\qquad (n\in \N)\,.
$$
Thus, an $n$-ary operation symbol is a morphism $f\colon G\to G +\dots + G$ ($n$ summands). We obtain a functor
$$
E\colon \ck \to \Sigma\mbox{-}\Met
$$
assigning to every object $K$ the algebra $EK$ on the metric space  $\ck(G,K)$ whose operation $\sigma_{EK}$ for $\sigma\in \Sigma_n$ we specify next. For each $n$-tuple $x_i\colon G\to K$ we define the result of $\sigma_{EK}$ in $\ck (G,K)$ as the following composite
$$
\sigma_{EK} (x_i)_{i<n} \equiv G\xrightarrow{\sigma} \coprod\limits_n G \xrightarrow{[x_i]} K\,.
$$
This is a nonexpanding map $\sigma_{EK} \colon (EK)^n \to EK$ because by Corollary \ref{C:cc} coproducts are conical.
To every morphism $f\colon K\to L$ the functor $E$  assigns the map post-composing with $f$:
$$
Ef = \ck (f,K) \colon \ck (G,K) \to \ck (G,L)\,.
$$

(a) $E$ is a well defined, fully faithful functor. Indeed, each $Ef$ is clearly nonexpanding and  a homomorphism: given $\sigma \in \Sigma_n$ and an $n$-tuple  $x_i \colon G\to K$, then
\begin{align*}
Ef \big(\sigma_K(x_i)\big) &= f\cdot [x_i] \cdot \sigma\\
&= [f\cdot x_i] \cdot \sigma\\
 &= \sigma_L \big(Ef (x_i)\big).
 \end{align*}
Thus $E$ defines a functor.
It is locally nonexpanding by Remark \ref{R:faith}.
 Moreover the  triangle below commutes:
$$
\xymatrix@C=1pc{
\ck \ar [rr]^E \ar[dr]_{\ck(G,-)} && \Sigma\mbox{-}\Met \ar[dl]^{U_\Sigma}\\
& \Met&
}
$$
Thus $E$ is faithful because $\ck(G,-)$ is due to Remark \ref{R:faith}. It is full because all finite copowers $\coprod\limits_{n}G$ are dense (Theorem \ref{T:cc}). Thus, given a homomorphism $k\colon EK\to EL$, we obtain a natural transformation from $E(-, K)$ to $E(-, L)$ as follows:
$$
\xymatrix@R=.21pc{
&\coprod_n G \ar[r]^>>>>>{[x_i] } & K&\\
\ar@{-}[rrr]&&&\\
&\coprod_n G \ar[r]_<<<<{[k(x_i)]} & L&
}
$$
The corresponding morphism $h\colon K\to L$ with $h\cdot [x_i] = [k(x_i)]$ then fulfils $Eh=k$.

(b) We denote by $\ck'$ the closure of $E[\ck]$ under isomorphisms in $\Sigma$-$\Met$.
It is equivalent to $\ck$. We prove that $\ck'$ is a variety by applying the Birkhoff Variety Theorem.

(b1) $\ck'$ is closed under products. Indeed, $U_\Sigma$ creates products (Remark \ref{R:pr}). Since $U_\Sigma \cdot E= \ck (G,-)$ preserves products, it follows that $E$ also preserves them. Given a collection of objects $K_i' \cong EK_i$ in $\ck'$, we have $\Pi K_i'\cong E(\Pi K_i) \in \ck'$.

(b2) Subalgebras. Fix an algebra $EK$, $K\in \ck$. To give a subalgebra means to give a subspace $m\colon M\hookrightarrow \ck (G,K)$ closed under the operations of $EK$. Our task is to find $L\in \ck$ such that $EL$ is isomorphic to the algebra $M$. The morphism $\widehat m\colon M\oti G\to K$ has a factorization $\widehat m=m'\cdot c$ as in Proposition \ref{P:fac}:
$$
\xymatrix@=2pc{
M \otimes G\ar[rr]^{\widehat m} \ar[dr]_{c} && K\\
& L \ar[ur]_{m'}&
}
$$

Since $E$ is an equivalence  and $m'$ is an isometric embedding, so is $Em'$. Thus $Em' [EL]$ is a subalgebra of $EK$. We prove for every morphism $g\colon G\to K$ that
$$
g\in M \quad \mbox{iff}\quad g\in Em' [EL]\,.
$$
That is, $M$ is the same algebra as $Em' [EL]$: both are subspaces of $EK$ closed under  the operations.

If $g\in M$, the morphism $g_0\colon I\to M$ representing it fulfils $g =\widehat{m\cdot g_0}$ (Example \ref{E:n}). Since $\widehat{m\cdot g_0} =\widehat m \cdot (g_0 \oti G)$, we obtain the commutative square below:
$$
\xymatrix@C=2pc@R=2pc{
G \ar[r]^g \ar[d]_{g_0 \oti G} & K\\
M\oti G \ar[r]_<<<<c & L \ar[u]_{m'}
}
$$
Thus for $h= c\cdot (g_0 \times G)$ in $EL$ we have $g= Em'(h)\in Em' [EL]$.

Conversely, if $g=m'\cdot h$ for some $h\colon G\to L$,
we prove $g\in M$. Since $G$ is a subregular projective, the morphism $h$ factorizes through the subregular epimorphism $c$, say $h= c\cdot h'$. Next, the morphism $h'\colon G\to M\oti G$ factorizes through the subregular epimorphism $i\oti G$ where $i\colon |M|\to M$ is carried by identity (Remark \ref{R:tt}). Say,  $h' = (i\oti G)\cdot h''$ for some $h'' \colon G\to |M|\oti G$. Thus  $h=c \cdot (i \otimes G) \cdot h''$:
$$
\xymatrix@C=2pc@R=2pc{
k\otimes G \ar[r]^{j\otimes G} & |M| \otimes G \ar[r]^{i\otimes G} & M\otimes G \ar[r]^{\widehat m} \ar[d]_c& K\\
G\ar[u]^{\sigma} \ar[ur]_{h''}\ar[rr]_h && L \ar[ur]_{m'} &
}
$$ 
Since $G$ is abstractly finite, there is a finite subobject $j\colon k\hookrightarrow |M|$ (where $k=\{0, \dots , k-1\}$) in $\Set$ such that $h''$ factorizes through $j\oti G$;  we call the factorizing morphism $\sigma \colon G\to k\oti G$.
 Thus $\sigma \in \Sigma_k$.
 
We obtain a $k$-tuple $i \cdot j$ in $M\subseteq EK$ given by $m\cdot i\cdot j\colon k\to \ck(G,K)$. The corresponding map from $\coprod\limits_{k} G$ to $K$ is $\widehat{m\cdot i\cdot j}$, therefore the operation $\sigma_{EK}$ yields
$$
\sigma_{EK} (m\cdot i\cdot j) = \widehat{m\cdot i\cdot j} \cdot \sigma\,.
$$
Now the composite of the upper row of the above diagram is
$$
\widehat m \cdot \big([i\cdot j] \oti G) = \widehat{m\cdot i\cdot j}\,.
$$
Thus the diagram yields
$$
m'\cdot h = \sigma_{EK} (m\cdot i \cdot j)\,.
$$
Since the subspace $m\colon M\hookrightarrow \ck(G,K)$ is closed under $\sigma_{EK}$, this proves that $g= m'\cdot h$ lies in $M$.

\vskip 1mm
(b3) Homomorphic images. Fix an algebra $EK$, $K\in \ck$. Given a surjective homomorphism    $e\colon EK\to A$ in $\Sigma$-$\Met$, we prove $A\in \ck'$. Let $D\colon \cb \to \Sigma$-$\Met$ be the kernel diagram (Example \ref{D:ker}) of $e$, then $e$ is its colimit map (Proposition \ref{P:sur} and Lemma \ref{L:pro}). Each $D\var$ (being a subalgebra of $EK\times EK$) lies in $\ck'$: use 
(b1) and (b2). Thus $D$ has a codomain restriction to $\ck'$. Since $E$ yields an equivalence functor $\ck \simeq \ck'$, there is a diagram $\bar D \colon \cb \to \ck$ with $D$ naturally isomorphic 
to $E\bar D$, say under
$\varphi \colon D\xrightarrow{\sim} E\bar D$, such that
$$
\bar Da = K \quad \mbox{and}\quad \varphi_a =\id_K\,.
$$

We verify that $\bar D$ is a subcongruence on $K$ (Definition \ref{D:proeq}).

(1) Each pair $\bar Dl_\var$, $\bar Dr_\var$ is $\var$-reflexive.
Indeed, let
  $q_1$, $q_2\colon Q\to K$ have distance $\var$, then so do $Eq_1$, $Eq_2\colon EQ\to Da$ by Item (a). Hence there exists $v\colon EQ \to D\var$ with $Eq_1 = Dl_\var \cdot v$ and $Eq_2= Dr_\var\cdot v$:
$$
\xymatrix@C=2pc@R=2pc{
& EQ \ar[dl]_{Eq_1} \ar[d]^v \ar[dr]^{Eq_2}&\\
EK \ar[d]_{\varphi_a=\id}& D\var\ar [l]^{Dl_\var}\ar[d]^{\varphi_\var} \ar[r]_{Dr_\var} & EK \ar[d]^{\varphi_a=\id}\\
EK & E\bar D\var \ar[r]_{E\bar Dr_\var} \ar[l]^{E\bar Dl_\var}& EK
}
$$
Since $E$ is full, there is  $u\colon Q \to \bar D\var$ with $\varphi _\var \cdot v=Eu$. Then the above diagram proves
$$
q_1 =\bar Dl_\var\cdot u \quad \mbox{and}\quad q_2 = \bar Dr_\var \cdot u
$$
since $E$ is faithful.

(2) Since $Dl_\var$, $Dr_\var$ is symmetric,  so is  $\bar Dl_\var$, $\bar Dr_\var$.

(3) $\ck'$ is closed under pullbacks in $\Sigma$-$\Str$ (due to (b1) and (b2)); thus, the transitivity condition for $D$ implies the transitivity condition for its codomain restriction. Hence $\bar D$ also satisfies this condition.

(4) Continuity of $D$ implies continuity of $\bar D$ since $E$ preserves limits.

We now apply the effectivity of subcongruences in $\ck$.
Let $\bar e \colon K\to \bar K$ be a morphism whose kernel diagram is the subcongruence $\bar D$ 
We can choose this morphism to be
a subregular epimorphism, using the factorization in Proposition 4.12.
From $G$ being a subregular projective it then follow that
$\bar  e \colon EK \to E\bar  K$ is also a subregular epimorphism, i.e
a surjective homomorphism. This homomorphism satisfies the compatibility
condition for $D$: use the natural isomorphism $\varphi$. Consequently,
$\bar  e$ factorizes through $e$: we have a homomorphism $h \colon A \to E\bar  K$ with
$$\bar e=h \cdot e.$$
To conclude the proof, we find an inverse morphism to $h$.

The inverse morphism $\bar  h \colon  E\bar  K  \to A$ takes $t \in E\bar  K$,
that is $t \colon G \to \bar  K$, factorizes it as $t=\bar e \cdot s$
for some $s \colon G \to K$ (using that $G$ is a subregular projective)
and applies $e$ to $s$:
$$ \hat  h (t)= e(s) \quad whenever\ t=\bar  e \cdot s.$$
Not only is the value $\bar  h (t)$ independent of the choice of $s$, $\bar  h$ is even
nonexpanding: given $t'=\bar  e \cdot s'$ in $E\bar  K$, we prove that
$$d(t,t') \geq d(e(s),e(s')).$$
Put $d(t,t')=\var$. Since $\bar  D$ is the kernel diagram of $\bar  e$,
from $(d(\bar  e \cdot s, \bar  e \cdot  s') \leq \var$ we conclude that there exists
$u \colon G \to \bar  D \var$ with
$$ s= \bar  D l_\var \cdot u \quad and \ s'= \bar  D r_\var \cdot u.$$
Morevover, the isomorphism $\varphi _\var \colon D \var \to E\bar  D \var$
yields $v \in D \var$ with $u= \varphi _\var (v)$. From $\bar  D l_\var \cdot
\varphi _\var = Dl_\var$ we then get
$$s=\bar  D l_\var \cdot \varphi _\var(v) = Dl_\var (v).$$
Analogously, $s'= Dr_\var (v)$. Applying the compatibility condition
$d(e \cdot Dl_\var , e \cdot Dr_\var) \leq \var$ to the element $v$
proves the desired inequality:
$$d(e(s),e(s'))=d(e \cdot Dl_\var (v), e \cdot Dr_\var (v)) \leq \var.$$

The nonexpanding map $\bar  h$ clearly fulfils $\bar  h \cdot \bar  e = e$.
Thus it is a homomorphism because $\bar  e$ is a surjective homomorphism
(Remark \ref{R:epi}). We have $h \cdot \bar  h = id$ due to
$$(h \cdot \bar  h) \cdot \bar  e = h \cdot e = \bar  e.$$
Analogously, $\bar  h \cdot h = id$ holds due to
$$(\bar h \cdot h) \cdot e = \bar  h \cdot \bar  e =e.$$

\end{proof}

%
%

\begin{remark} Analogously to Definition 2.6 a \emph{subeffective object} is
	an object whose hom-functor into $\Met$ preserves colimits of
	subcongruences.
\end{remark}

 It follows from Lemma \ref{L:pro} that in a category with $\var$-kernel pairs we have
\begin{center}
	subeffective \ $\Rightarrow$ \ subregular projective.
\end{center}

\begin{corollary}
	Varieties of quantitative algebras are, up to equivalence, precisely the
	metric-enriched categories with
	
	(1) Reflexive coequalizers;
	
	(2) An effective subvarietal generator.
	
\end{corollary}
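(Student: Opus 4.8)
Here ``effective subvarietal generator'' is to be read as a subvarietal generator which is moreover \emph{subeffective} (and hence, by the remark above, automatically subregularly projective). The plan is to reduce the assertion to Theorem~\ref{T:main}. \emph{Necessity} is quick: for a variety $\cv$ the free algebra $G$ on one generator is a subvarietal generator and $\cv$ has reflexive coequalizers (Proposition~\ref{P:ge}, Corollary~\ref{C:var}), and $G$ is subeffective because $\ck(G,-)\cong U_\cv = U_\Sigma\cdot E$, where the inclusion $E\colon\cv\hookrightarrow\Sigma\text{-}\Met$ preserves colimits of subcongruences (as seen in the proof of Proposition~\ref{P:ge}) and $U_\Sigma$ creates them (Corollary~\ref{C:cr}). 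For \emph{sufficiency}, assume $\ck$ has reflexive coequalizers and a subeffective subvarietal generator $G$; since a subeffective object is subregularly projective, $G$ is already a subvarietal generator, so by Theorem~\ref{T:main} it is enough to show that $\ck$ has effective subcongruences.

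Given a subcongruence $D\colon\cb\to\ck$ on $X=Da$, I would use Corollary~\ref{C:cc} (which provides all weighted limits and colimits) to form the colimit map $f\colon X\to C$ of $\colim_B D$, a subregular epimorphism, together with the kernel diagram $D_f$ of $f$. Since $f$ satisfies the compatibility condition (COM), the universal property of the $\var$-kernel pairs of $f$ yields a natural transformation $h\colon D\to D_f$ with $D_f l_\var\cdot h_\var = Dl_\var$ and $D_f r_\var\cdot h_\var = Dr_\var$. The whole matter comes down to proving that $h$ is an isomorphism; then $D$ is (naturally isomorphic to) the kernel diagram of $f$, and $\ck$ has effective subcongruences.

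To prove this I would descend to $\Met$ along $H=\ck(G,-)$ and then ascend again. First, $HD$ is a subcongruence in $\Met$: symmetry, $\var$-reflexivity and transitivity of $D$ transport to $HD$ because the adjunction $-\oti G\dashv H$ identifies $\ck(Q\oti G,Y)$ with $\Met(Q,\ck(G,Y))$ isometrically, while ``being a relation'' and the continuity condition are preserved since $H$, an enriched hom-functor, preserves all weighted limits. Next, subeffectiveness of $G$ makes $Hf$ the colimit map of $HD$, and preservation of $\var$-kernel pairs makes $HD_f$ the kernel diagram of $Hf$. Since $\Met$ has effective subcongruences (Proposition~\ref{P:eff1}), Lemma~\ref{L:pro} identifies $HD$, up to natural isomorphism, with the kernel diagram of its colimit map $Hf$, i.e.\ with $HD_f$; as a structure-compatible natural transformation into $HD_f$ is unique (its components are collectively monic relations), this isomorphism must be $Hh$. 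Hence every $Hh_\var$ is an isomorphism in $\Met$. Finally, Theorem~\ref{T:cc} says the finite copowers $\coprod_n G$ are dense, so the functor $K\mapsto\ck(-,K)|_{\ca^{\op}}$ is fully faithful, in particular conservative; since $\ck(\coprod_n G,-)\cong\ck(G,-)^n$ carries each $h_\var$ to an isomorphism, each $h_\var$ is an isomorphism in $\ck$. Thus $h$ is a natural isomorphism, $\ck$ has effective subcongruences, and Theorem~\ref{T:main} finishes the argument.

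The step I expect to be the main obstacle is the first part of the descent --- checking that $HD$ is genuinely a subcongruence in $\Met$, and in particular that $\var$-reflexivity and transitivity are transported --- which hinges on the tensor/hom adjunction being an \emph{isometry} on hom-objects rather than merely a bijection; the remaining steps are essentially bookkeeping with results already established.
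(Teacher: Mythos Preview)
Your argument is correct, but it takes a genuinely different route from the paper's own proof. The paper does \emph{not} establish effective subcongruences in $\ck$; instead it revisits the proof of Theorem~\ref{T:main} and observes that effectivity is invoked only once, in part~(b3), to obtain the inverse $\bar h$ of $h\colon A\to E\bar K$. With a subeffective generator $G$, the functor $E$ (satisfying $U_\Sigma\cdot E=\ck(G,-)$) preserves colimits of subcongruences, so taking $\bar e$ to be the colimit map of $\bar D$ makes $E\bar e$ the colimit map of $E\bar D\cong D$; since $e$ is already the colimit map of $D$, the universal property of $E\bar e$ supplies $\bar h$ directly. Your approach instead treats Theorem~\ref{T:main} as a black box and proves the stronger intermediate statement that conditions~(1)+(2) force effective subcongruences: you transport $D$ along $H=\ck(G,-)$ to a subcongruence in $\Met$, use subeffectiveness and Proposition~\ref{P:eff1} to conclude $Hh$ is an isomorphism, and then reflect this back via density (Theorem~\ref{T:cc}). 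The paper's route is shorter and avoids the bookkeeping of showing $HD$ is a subcongruence; your route is more modular and yields an interesting by-product (effectivity of subcongruences follows from the existence of a subeffective subvarietal generator) that the paper does not state.
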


Indeed, the necessity is clear because in Proposition \ref{P:ge} the object $G$ is effective since $U_\cv$ preserves colimits
of subcongruences. This follows from $\cv$ being closed under homomorphic images and $U_\Sigma$ preserving
colimits of subcongruences (Corollary \ref{C:cr}).

The proof of sufficiency is as in the preceding theorem, where the effectivity of subcongruences was only used
at the end for the definition of $\bar h$. It is easy to see that if $G$ is effective, then $E$ preserves colimits of
subcongruences, and we obtain $\bar h$ from the universal property of $E \bar e$.
\begin{example} \label{E:count} The category
$$
\Mon (\Met)
$$
of monoids in the monoidal category $(\Met, \otimes, 1)$ is not equivalent to a variety of quantitative algebras. Recall that these monoids have multiplication which is nonexpanding with respect to the addition metric $(3.1)$:
$$
d(xy, x'y') \leq d(x,x') + d(y, y')\,.
$$

Assuming that  we have a subvarietal generator $G$, we derive a contradiction by proving that $G$ fails to have tensors. We will namely prove that the left adjoint of the hom-functor of $G$ is not enriched. We  denote that hom-functor by
$$
H = \Mon (\Met) (G, -) \colon \Mon(\Met) \to \Met.
$$
The metric of the varietal generator  $G$ is discrete. Indeed, let $G_0$ be the underlying discrete monoid and $i\colon G_0\to G$ the identity-carried homomorphism.
It is a subregular epimorphism (Corollary \ref{C:var-epi}), thus a split epimorphism, since $G$ is a subregular projective. Consequently, $i$ is an isomorphism. Next consider the free discrete monoid $G_0 ^*$
on the set $G_0$,  and the canonical homomorphism $e\colon G_0 ^\ast \to G_0$. This, too, is a subregular (thus split) epimorphism. In the ordinary category of monoids the fact that we have a split epimorphism $e\colon G_0 ^\ast \to G_0$ implies that $G_0$ is a free monoid (\cite[Theorem 7.2.3]{AS}). Thus, we can assume that for some set $A$  our monoid $G$ is the monoid of words 
$$
G = A^\ast \quad \mbox{(discrete\  monoid)}.
$$
 Now observe first that the forgetful functor $U \colon \Mon(\Met)\to \Met$ has the left adjoint $F\colon \Met \to \Mon(\Met)$  taking a space $X$ to the monoid
$$
FX=\coprod_{n\in\N} X^n
$$
of finite words with the addition metric on $X^n$ (and, as usual, word concatenation as the multiplication). This functor $F$ is not enriched: consider the obvious pair
$$
f_a, f_b\colon \{0\} \to \{a,b\}\quad \mbox{where}\quad d(a,b)=1.
$$
It has distance $d(f_a, f_b)=1$. Then, using the addition metric, the maps $f_a^n, f_b^n$ have distance $n$, therefore $d(Ff_a, Ff_b)=\infty$.

The hom-functor $H$ of $G$ is naturally isomorphic to the functor $U^A$ assigning to a metric monoid $M$ the metric space $(UM)^A$. Its left adjoint $F_A\colon \Met \to \Mon(\Met)$ is given by the $A$-copowers of the free monoid: $F_AX = \coprod\limits_{A} FX$. Since $G$ is a generator, $A\ne \emptyset$. From $d(Ff_a, Ff_b)=\infty$ we conclude $d(F_A f_a, F_A f_b)=\infty$. Thus $F_A$ is not enriched, hence $G$ does not have tensors.
\end{example}

\end{document}